\documentclass[twoside, 11pt]{article}   
\usepackage{jmlr2e}
\usepackage{amsmath,amssymb,color,array,multirow}
\usepackage{epsfig,amsfonts,latexsym, hyperref,  mathrsfs}
\usepackage{natbib}
\usepackage{centernot}
\usepackage{stmaryrd}
\usepackage{tikz}
\usetikzlibrary{shadows,trees}
\parskip 0.2 cm

\numberwithin{equation}{section}

\allowdisplaybreaks

\usepackage{lastpage}
\jmlrheading{21}{2020}{1-\pageref{LastPage}}{12/18; Revised
4/20}{4/20}{18-814}{M. Bhattacharjee, M. Banerjee and G. Michailidis}
\ShortHeadings{Change Point Estimation in DSBM}{Bhattacharjee, Banerjee and Michailidis}

\firstpageno{1}

\begin{document}

\title{Change Point Estimation \\  in a Dynamic Stochastic Block Model}

\author{\name Monika Bhattacharjee  \email m.bhattacharjee@ufl.edu \\
       \addr Informatics Institute\\ 
       University of Florida\\
        Gainesville, USA
       \AND
       \name Moulinath Banerjee \email moulib@umich.edu  \\
       \addr Department of Statistics\\
       University of Michigan\\
        Ann Arbor, USA
        \AND
        \name George Michailidis \email gmichail@ufl.edu\\
        \addr Informatics Institute\\ 
        Department of Statistics \\
       University of Florida\\
        Gainesville, USA}

\editor{Jie Peng}

\maketitle

\begin{abstract}%
We consider the problem of estimating the location of a single change point in a network generated by a dynamic stochastic block model mechanism. This model produces community structure in the network that exhibits change at a single time epoch. We propose two methods of estimating the change point, together with the model parameters,  before and after its occurrence. The first employs a least-squares criterion function and takes into consideration the full structure of the stochastic block model and is evaluated at each point in time. Hence, as an intermediate step, it requires estimating the community structure based on a clustering algorithm at every time point. The second method comprises the following two steps: in the first one, a least-squares function is used and evaluated at each time point, but {\em ignoring the community structure} and only considering a random graph generating mechanism exhibiting a change point. Once the change point is identified, in the second step, all network data before and after it are used together with a clustering algorithm to obtain the corresponding community structures and subsequently estimate the generating stochastic block model parameters. The first method, since it requires knowledge of the community structure and hence clustering at every point in time,  is significantly more computationally expensive than the second one. On the other hand, it requires a significantly less stringent identifiability condition for consistent estimation of the change point and the model
parameters than the second method; however, it also requires a condition on the misclassification rate of misallocating network nodes to their respective communities that may fail to hold in many realistic settings. Despite the apparent stringency of the identifiability condition for the second method, we show that networks generated by a stochastic block
mechanism exhibiting a change in their structure can easily satisfy this condition under a multitude of scenarios, including merging/splitting communities, nodes joining another community, etc. Further, for both methods under their respective identifiability and certain additional regularity conditions,  we establish rates of convergence and derive the asymptotic distributions of the change point estimators. The results are illustrated on synthetic data.
In summary, this work provides an in-depth investigation of the novel problem of change point analysis for networks generated by stochastic block models, identifies key conditions for the consistent estimation of the change point, and proposes a computationally fast algorithm that solves the problem in many settings that occur in applications. Finally, it discusses challenges posed by employing clustering algorithms in this problem, that require additional investigation for their full resolution.
\end{abstract}
   
\begin{keywords}
stochastic block model, Erd\H{o}s-R\'{e}nyi random graph, change point,  edge probability matrix, community detection, estimation, clustering algorithm,
convergence rate
\end{keywords}

\section{Introduction} \label{sec: intro}
The modeling and analysis of network data have attracted the attention of multiple scientific communities, due to their ubiquitous presence in many application domains; see  \cite{newman2006structure}, \cite{kolaczyk2014statistical}, \cite{crane2018probabilistic} and
references therein.  A popular and widely used statistical model for network data is the Stochastic Block Model (SBM) introduced in  \cite{HLL1983}.  It is a special case of a random graph model, where the nodes are partitioned into $K$ disjoint groups  (communities) and the edges between them are drawn independently with probabilities that only depend on the community membership of the nodes. This leads to a significant reduction in the dimension of the parameter space, from $\mathcal{O}(m^2)$ for the random graph model, with $m$ being the number of nodes in the network, to $\mathcal{O}(K^2)$ ($K<<m$).

There has been a lot of technical work on the SBM, including (i) estimation of the underlying community structure and the corresponding community  connection probabilities, for examples 
\cite{C2014,J2015,J2016,LR2015,RCY2011,SB2015,ZLZ2012};  (ii) establishing the minimax rate for estimating the SBM parameters---for examples \cite{G2015rate,K2017}---and the community structure---for examples \cite{ZZ2016,GMZZ2015}---under the assumption that the assignment problem of nodes to communities can be solved {\em exactly}. However, the latter problem is computationally NP-hard and hence estimates of the community structure and connection probabilities based on easy to compute procedures compromise the minimax rate---see \cite{ZLZ2015}.

There has also been some recent work on understanding the evolution of community structure over time, based on observing a sequence of network adjacency matrices---for examples
\cite{D2016a,D2016b,H2015,K2010,MM2017,M2015b,X2010,Xu2015,Y2011,bao2018core}. Various modeling formalism is employed including Markov random field models, low rank plus sparse decompositions  and dynamic versions of SBM (DSBM). These studies focus primarily on fast and scalable computational procedures for identifying the evolution of community structure over time. Some work that investigated theoretical properties of the DSBM and more generally graphon models assuming that the node assignment problem can be solved exactly includes \cite{P2016s}, while the theoretical performance of spectral clustering for the DSBM was examined in \cite{B2017} and \cite{P2017}. The last two studies estimate
the edge probability matrices  by either directly averaging  adjacency matrices observed at different time points, or by employing a kernel-type smoothing procedure  and extract the group memberships of the nodes by using spectral clustering.

The objective of this paper is to examine the {\em offline estimation} of a single change point under a DSBM network generating mechanism. Specifically, a sequence of networks is generated independently across time through the SBM mechanism, whose community connection probabilities exhibit a change at some time epoch. Then, the problem becomes one of identifying the change point epoch based on the observed sequence of network adjacency matrices, detecting the community structures before and after the change point, and estimating the corresponding SBM model parameters.

The existence of change points and their estimation has been well-studied for many univariate statistical models evolving independently over time and with shifts in mean and  in variance structures. A broad overview of the corresponding literature can be found in \cite{BD2013noncp} and \cite{CH1997}.  However, in many applications, multivariate (even high dimensional) signals are involved, while also exhibiting dependence across time---see the review article by \cite{AH2013} and references therein.

The emergence of data with network structure has accentuated the need to study change point problems in that context. For example, in political science, the study of political polarization has received increased attention (\cite{MM2013}) and especially the detection of regime changes (\cite{peel2015detecting}), including network-based approaches (\cite{bao2018core}). 
Recent work on change-point detection for network structured data includes \cite{wang2017hierarchical} and \cite{Wang2018}. Specifically, \cite{wang2017hierarchical} considers a generalized hierarchical random graph model, whereas \cite{Wang2018} focuses on change-point detection in sparse networks. The latter study deals with the Erd\H{o}s-R\'{e}nyi random graph model. However, to the best of our knowledge, a detailed and rigorous investigation of change-point detection for the dynamic SBM is largely lacking.

Therefore, the key contributions of this paper are threefold: first, the development of a computational strategy for solving the problem and establishing its theoretical properties under suitable regularity conditions, including (i) establishing the rate of convergence for the least-squares estimate of the change-point and (ii) the DSBM parameters, as well as (iii) deriving the asymptotic distribution of the change-point. An important step in the strategy for obtaining an estimate of the change-point involves clustering the nodes to communities, for which we employ a spectral clustering algorithm that exhibits cubic computational cost in the number of edges in the adjacency matrix. However, the theoretical analysis of the first method which involves clustering \emph{at every time point} requires imposing a rather stringent assumption on the rate of misclassifying nodes to communities. For these reasons, the second key contribution of this work is the introduction of a two-step computational strategy, wherein the first step,  the change-point is estimated based on a procedure that {\em ignores} the community structure, while in the second step the pre- and post-change-point model parameters are estimated using a spectral clustering algorithm, but at \emph{a single time point}. It is established that this strategy yields consistent estimates
for the change-point and the community connection probabilities, at a linear computational cost in the number of edges. However, the procedure requires a  stronger identifiability condition compared to the first strategy. Naturally, no additional condition of controlling the rate
of misclassifying nodes to communities during the first step is required. The third contribution of the paper is to show that the more stringent identifiability condition under the second strategy is easily satisfied in a number of scenarios by the DSBM, including splitting/merging communities and reallocating nodes to other communities before and after the change-point. 
Overall, this work provides valuable insights into the technical challenges of change-point analysis for DSBM and also  an efficient computational strategy that delivers consistent estimates of all the model parameters. Nevertheless, the challenges identified require further investigation for their complete resolution, as discussed in Section \ref{sec:discuss}.

The remainder of the paper is organized as follows. In Section \ref{sec: DSBM}, we introduce the DSBM model together with the necessary definitions and notation for technical development. Subsequently, we present the strategy to detect the change-point that involves a {\em community detection step at each time point}, followed by estimation of the DSBM model parameters together with the asymptotic properties of the estimators.
In Section \ref{sec: 2step}, we introduce a 2-step computational strategy for the DSBM change-point detection problem, which is computationally significantly less expensive and discuss the consistency of these estimators.  Section \ref{sec: compare} involves a comparative study between the two change-point detection strategies previously presented and also  provides many realistic settings where the computationally fast 2-step algorithm is provably consistent. The numerical performance of the two strategies based on synthetic data is illustrated in Section \ref{subsec: simulation}.  We briefly discuss  other community detection methods for DSBM involving a single change-point  in Section \ref{sec:discuss}. Finally, the asymptotic distribution of the change-point estimates along with a data-driven procedure for identifying the correct limiting regime is presented in Section \ref{sec: ADAP}.  Some concluding remarks are drawn in Section \ref{sec:concluding-remarks}. All proofs and additional technical material are presented  in Section \ref{sec: proofs}.

\section{The dynamic stochastic block model (DSBM)} \label{sec: DSBM}
The structure of the SBM is determined by the following parameters: (i) $m$ the number of nodes or vertices  in the network, (ii) a symmetric  $K \times K$ matrix $\Lambda = ((\lambda_{ij}))_{K \times K}$ containing the community connection probabilities and (iii) a partition of the node set $\{1,2,\ldots, m\}$ into $K$ communities, which is represented by a many-to-one onto map $z: \{1,2,\ldots, m\} \to \{1,2,\ldots, K\}$ for some $K \leq m$.
Hence, for each $1 \leq i \leq m$, the community of node $i$ is determined by $z(i)$, or equivalently
\begin{eqnarray}
\text{$l$-th community} = \mathcal{C}_l = \{i \in \{1,2,\ldots, m\}: z(i)=l\}\ \forall 1 \leq l \leq K. \nonumber
\end{eqnarray}
The map $z$ determines the community structure under the SBM. The observed edge set of the network is obtained as follows: any two nodes $i \in \mathcal{C}_l$ and $j \in \mathcal{C}_{l^\prime}$ are connected by an edge with probability $\lambda_{ll^\prime}I(i \neq j)$, independent of any other pair of nodes. Self-loops are not considered and hence the probability of having an edge between nodes $i$ and $j$ is $0$ whenever $i=j$. Henceforth, we use SBM$(z,\Lambda)$ for denoting an SBM with community structure $z$ and community connection probability matrix $\Lambda$. Next, let
\begin{eqnarray}
\text{Ed}_{z}(\Lambda) = ((\lambda_{z(i)z(j)}I(i \neq j)))_{m \times m}, \nonumber
\end{eqnarray}
which is the edge probability matrix whose $(i,j)$-th entry represents the probability of having an edge between nodes $i$ and $j$. Note that we are dealing with undirected networks and thus $\Lambda$ and Ed$_{z}(\Lambda)$ are symmetric matrices.  

The data come in the form of an observed square symmetric matrix $A = ((A_{ij}))_{m \times m}$ with entries
\begin{eqnarray} \nonumber
A_{ij} = \begin{cases}
1,\ \ \text{if an edge between nodes $i$ and $j$ is observed} \\
0,\ \ \text{otherwise}.
\end{cases}
\end{eqnarray}
An adjacency matrix $A$ is said to be generated according to SBM$(z,\Lambda)$, if $A_{ij} \sim$ \\ $\text{Bernoulli}(\Lambda_{z(i)z(j)}I(i \neq j))$ independently, and is
denoted by $A \sim \text{SBM}(z,\Lambda)$. It is easy to see that all diagonal entries of $A$ are $0$.   

In a DSBM, we consider a sequence of stochastic block models evolving {\em independently} over time, with  $A_{t,n} = ((A_{ij,(t,n)}))_{m \times m}$ denoting the adjacency matrix at time point $t$. Hence, $A_{t,n} \sim \text{SBM}(z_t,\Lambda_t)$ independently, with $n$ being the total number of time points available, and note $\{A_{t,n}: 1 \leq t \leq n, n \geq 1\}$ forms a triangular sequence of adjacency matrices. Further, in the technical analysis we assume that both the total number of time points and the number of nodes  are growing to infinity, that is $n,m \to \infty$. Moreover, the number of communities depends on both the number of nodes and time,  and grows to infinity, that is, $K = K_{m,n} \to \infty$ as $m,n \to \infty$.

We are interested in a DBSM exhibiting a single change-point and its estimation in an offline manner. For presenting the results, we embed all the time points in the $[0,1]$ interval, by dividing them by $n$.  Suppose $\tau_{m,n} \in (c^{*},1-c^{*})$ corresponds to the change-point epoch in the DSBM.  Hence,
\begin{eqnarray}
A_{t,n} \sim \begin{cases} \text{SBM}(z,\Lambda),\ \ \text{if $1 \leq t \leq \lfloor n\tau_{m,n} \rfloor$} \\
\text{SBM}(w,\Delta),\ \ \text{if $\lfloor n\tau_{m,n} \rfloor < t <n$}, \label{eqn: dsbmmodel}
\end{cases}
\end{eqnarray}
and $z \neq w$ and/or $\Lambda \neq \Delta$.  Note that $z$ and $w$ correspond to the pre- and post-change-point community structures, respectively. Similarly $\Lambda$ and $\Delta$ are the pre- and post-change-point community connection probability matrices, respectively. Further, note that $z, w, \Lambda$ and $\Delta$ may depend on $m$ and  $n$.   

Our objective is to estimate the model parameters $\tau_{m,n}, z,w,\Lambda$ and $\Delta$. Throughout this paper, we assume that $0 < c^{*} <\tau_{m,n} <1 - c^{*} <1$ with $c^{*}$ being known to avoid unnecessary technical complications if the true change-point is located arbitrarily close to boundary time points. We also assume that the total number of communities before and after the change-point are equal. \emph{Even if they are different, our results continue to hold after  replacing $K$ by the maximum of the number of communities to the left and the right.} 

To identify the change-point, we employ the following least-squares  criterion function
\begin{eqnarray}
\tilde{L} (b,z,w,\Lambda,\Delta) &=& \frac{1}{n}\sum_{i,j=1}^{m} \bigg[\sum_{t=1}^{nb} (A_{ij,(t,n)} - \lambda_{z(i)z(j)})^2 + \sum_{t=nb+1}^{n} (A_{ij,(t,n)} - \delta_{w(i)w(j)})^2 \bigg]. \hspace{1 cm}\label{eqn: estimateccc}
\end{eqnarray}

\begin{remark} \label{use-likelihood-criterion}
Note that the Bernoulli likelihood function criterion could also be used that will yield similar results for the change-point estimators, but it will need  stronger assumptions and will involve more technicalities compared to the least-squares criterion function adopted. More details on the likelihood criterion function for detecting a change-point in a random graph model can be found in \cite{BBM2017}. Results on the maximum likelihood estimator of the change-point in a random graph model are consequences of the results in \cite{BBM2017}. However, in DSBM one also needs to address the problem of assigning nodes to their respective communities, which makes the problem technically more involved as shown next. However, the main message of this paper will remain the same, irrespective of employing a likelihood or a least-squares criterion. 
\end{remark}

\noindent Let 
\begin{eqnarray}
S_{u,z} &=& \{i \in \{1,2\ldots, m\}: z(i) = u\},\ \ \ s_{u,z} = |S_{u,z}|, \nonumber  \\
S_{u,w} &=& \{i \in \{1,2\ldots, m\}: w(i) = u\},\ \ \ s_{u,w} = |S_{u,w}|  \label{eqn: blocksize}
\end{eqnarray}
denote the $u$-th block and block size under the community structures $z$ and $w$.  Also define,
\begin{eqnarray}
\tilde{\Lambda}_{z,(b, n),m} &=& ((\tilde{\lambda}_{uv,z,(b,n),m}))_{m \times m},\ \   \tilde{\lambda}_{uv,z,(b,n),m} = \frac{1}{nb} \frac{1}{s_{u,z} s_{v,z}} \sum_{t=1}^{nb} \sum_{\stackrel{i: z(i) =u}{j: z(j)=v}}  A_{ij,(t,n)},\ \ \ \ \nonumber \\
\tilde{\Delta}_{w,(b,n),m} &=& ((\tilde{\delta}_{uv,w,(b,n),m}))_{m \times m},\ \  \tilde{\delta}_{uv,w,(b,n),m} =  \frac{(n(1-b))^{-1}}{s_{u,w}s_{v,w}} \sum_{t=nb+1}^{n} \sum_{\stackrel{i: w(i)=u}{j: w(j)=v}} A_{ij,(t,n)}. \ \ \ \ \ \   \label{eqn: estimateb} 
\end{eqnarray}
Define the \textit{sparsity parameter} $\rho_{m,n} = \max_{1\leq u \leq v \leq K} \{\Lambda_{u,v}, \Delta_{u,v}\}$ which may depend on both $m$ and $n$. The DSBM in (\ref{eqn: dsbmmodel}) is dense if $\inf_{m,n}\rho_{m,n} > C>0$ and sparse if $\rho_{m,n} \to 0$ as $m,n \to \infty$. 

We start our analysis by assuming that the {\bf community structures} $z$ and $w$ are {\bf known}. In that case, an estimate of the change-point can  be obtained by solving
\begin{eqnarray}
\tilde{\tau}_{m,n} &=& \arg \min_{b \in (c^{*},1-c^{*})} \tilde{L}(b,z,w,\tilde{\Lambda}_{z,(b,n),m},\tilde{\Delta}_{w,(b,n),m}).   \label{eqn: estimateintro1}
\end{eqnarray}

The following signal-to-noise condition guarantees that the change-point is identifiable under a known community structure.
\vskip 3pt
\noindent \textbf{SNR-DSBM:} $\frac{n}{K^2 \rho_{m,n}} ||\text{Ed}_{z}(\Lambda) - \text{Ed}_{w}(\Delta)||_F^2 \to \infty$ 
\vskip 3pt
Intuitively, it implies that the  signal per connection probability parameter, after scaling by the sparsity parameter, needs to grow faster than $1/\sqrt{n}$, which is in accordance with
identifiability conditions for other change-point problems (for example see \cite{kosorok2008introduction} Section $14.5.1$).
\vskip 3pt
The following assumption on the sparsity parameter restricts its rate of decay and ensures  consistency of $\tilde{\tau}_{m,n}$ scaled by the sparsity parameter with details given in Remark \ref{rem: sparseknowncomm1}.
\vskip 3pt
\noindent \textbf{Sparse-DSBM:} $\rho_{m,n} > CK^{-2}$ for some $C>0$.
\vskip 3pt
 It is easy to see that Sparse-DSBM is always satisfied by a dense DSBM.  

\indent The following Theorem  establishes  asymptotic properties for  $\tilde{\tau}_{m,n}$.  Its proof is similar  (albeit much simpler in structure) to the proof of Theorem \ref{lem: b1}, where we deal with unknown community structures. Hence, it is omitted.

\begin{theorem} \label{thm: dsbmknown}  \textbf{(Convergence rate of ${\tilde{\tau}}_{m,n}$)} 
Suppose SNR-DSBM  and Sparse-DSBM  hold.  Then, 
$$n\rho_{m,n}^{-1} ||\text{Ed}_z( \Lambda) - \text{Ed}_w(\Delta) ||_{F}^2 ({\tilde{\tau}}_{m,n} - \tau_{m,n}) = O_{\text{P}}(1).$$ 
\end{theorem}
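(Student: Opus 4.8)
The plan is to run the standard argmax-based rate argument for change-point $M$-estimators, with extra care for the block structure. Write $D_n:=\|\text{Ed}_z(\Lambda)-\text{Ed}_w(\Delta)\|_F^2$ and let $\langle\cdot,\cdot\rangle$ be the Frobenius inner product. First I would profile out the connection probabilities: summing the identity $\sum_t(X_t-\bar X)^2=\sum_tX_t^2-(\text{number of terms})\bar X^2$ over all blocks and the two segments shows that for each $b$,
\[
\tilde L(b,z,w,\tilde\Lambda_{z,(b,n)},\tilde\Delta_{w,(b,n)})=\tfrac1n\sum_{i,j=1}^{m}\sum_{t=1}^{n}A_{ij,(t,n)}-H_n(b),
\]
where
\[
H_n(b):=b\sum_{u,v}s_{u,z}s_{v,z}\,\tilde\lambda_{uv,z,(b,n)}^2+(1-b)\sum_{u,v}s_{u,w}s_{v,w}\,\tilde\delta_{uv,w,(b,n)}^2 .
\]
Hence $\tilde\tau_n=\arg\max_b H_n(b)$, and $H_n$ is a sum over block pairs of squared block-averaged partial sums --- precisely the CUSUM object of mean-change problems. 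By symmetry it suffices to treat $b\ge\tau_n$; the target is that for every $\varepsilon>0$ there exist $M,N$ with $\mathbb P\big(nD_n|\tilde\tau_n-\tau_n|>M\big)<\varepsilon$ for $n\ge N$, and since $H_n(\tilde\tau_n)\ge H_n(\tau_n)$ this reduces to showing $H_n(\tau_n)-H_n(b)>0$ with high probability, uniformly over $b$ far from $\tau_n$.

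For the deterministic drift I would compare with the criterion evaluated at the \emph{true, fixed} parameters: $\tilde L(b,z,w,\Lambda,\Delta)$ is a partial-sum process in $b$ with $\mathbb E[\tilde L(b,z,w,\Lambda,\Delta)]-\mathbb E[\tilde L(\tau_n,z,w,\Lambda,\Delta)]=(b-\tau_n)D_n$, and $\tilde L(b,\ldots)=\tilde L(b,z,w,\Lambda,\Delta)-\mathrm{Red}(b)$ with $\mathrm{Red}(b)\ge0$ the reduction due to profiling. A direct moment computation then gives, uniformly in $b\in(c^*,1-c^*)$,
\[
\mathbb E[H_n(\tau_n)]-\mathbb E[H_n(b)]=(b-\tau_n)\Big(D_n-\tfrac{b-\tau_n}{b}\,\|\Pi_z(\text{Ed}_z(\Lambda)-\text{Ed}_w(\Delta))\|_F^2\Big)+O\Big(\tfrac{(b-\tau_n)K^2}{n}\Big),
\]
where $\Pi_z$ is the orthogonal projection onto matrices constant on the $z$-blocks. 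The crucial point is that the projected term only \emph{partially} cancels the signal: since $\text{Ed}_z(\Lambda)$ is already constant on the $z$-blocks, $\|\Pi_z(\cdot)\|_F\le\|\cdot\|_F$, and as $\tau_n/b>c^*$, $D_n-\tfrac{b-\tau_n}{b}\|\Pi_z(\text{Ed}_z(\Lambda)-\text{Ed}_w(\Delta))\|_F^2\ge(\tau_n/b)D_n\gtrsim D_n$; so the within-$z$-block variation of the post-change probabilities --- which refitting the $z$-blocks in the first segment cannot absorb --- restores the \emph{full} signal $D_n$, not merely a projected version. The leftover $O(K^2/n)$ is the degrees-of-freedom cost of fitting the $K^2$ connection-probability parameters, and this is exactly where \textbf{SNR-DSBM} enters: $nD_n/K^2\to\infty$ means $K^2/n=o(D_n)$, whence $\mathbb E[H_n(\tau_n)]-\mathbb E[H_n(b)]\gtrsim(b-\tau_n)D_n$ for all large $n$.

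It remains to bound $\big(H_n(\tau_n)-H_n(b)\big)-\mathbb E\big(H_n(\tau_n)-H_n(b)\big)$ uniformly in $b$. Writing each $\tilde\lambda_{uv,z,(b,n)}$ and $\tilde\delta_{uv,w,(b,n)}$ as its mean plus a centered Bernoulli average and expanding the squares, the difference splits into: (i) a signal-times-noise cross term which, once the first- and second-segment contributions are combined (the large pieces separately present in each segment cancel, leaving only a linear transform of the jump), equals $\tfrac2n\sum_{n\tau_n<t\le nb}\big\langle G_b,\,A_{t,n}-\text{Ed}_w(\Delta)\big\rangle$ with $G_b:=\big(I-\tfrac{b-\tau_n}{b}\Pi_z\big)\big(\text{Ed}_w(\Delta)-\text{Ed}_z(\Lambda)\big)$, a mean-zero partial sum in $b$ whose increments have variance $\le\tfrac14\|G_b\|_F^2\le\tfrac14D_n$ and so pointwise standard deviation $\lesssim\sqrt{(b-\tau_n)D_n/n}$; (ii) noise-times-noise terms from the profiling, each a sum over the $\asymp K^2$ mutually independent block pairs, of standard deviation $\lesssim|b-\tau_n|K/n$ or $\lesssim\sqrt{|b-\tau_n|}\,K/n$; and (iii) residuals dominated by (i). Because SNR-DSBM forces $K/\sqrt{nD_n}\to0$, the terms in (ii) are $o$ of the drift on each of the shells below, while (i) fixes the $O_{\text{P}}(1)$ scale; uniformity over $b$ within a shell follows from Bernstein's inequality for the bounded Bernoulli sums together with a H\'{a}jek--R\'{e}nyi-type maximal inequality exploiting the partial-sum-in-$\lfloor nb\rfloor$ structure of every term. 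A peeling argument then finishes: over the shells $S_k=\{b\in(c^*,1-c^*):2^kM/(nD_n)\le|b-\tau_n|<2^{k+1}M/(nD_n)\}$, $k\ge0$, the drift exceeds a constant multiple of $2^kM/n$ while the maximal fluctuation is $\lesssim\sqrt{2^kM}/n$ (up to the SNR-controlled and logarithmic factors), so $\mathbb P\big(\sup_{b\in S_k}(H_n(b)-H_n(\tau_n))\ge0\big)\lesssim 1/(2^kM)$; summing over $k$ gives $\lesssim 1/M\to0$ as $M\to\infty$, which with $H_n(\tilde\tau_n)\ge H_n(\tau_n)$ yields $nD_n(\tilde\tau_n-\tau_n)=O_{\text{P}}(1)$.

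The hard part will be step three: the profiling combined with the block structure produces a fair number of stochastic terms, and one must verify uniformly in $b$ that every piece driven by the $K^2$ fitted parameters --- both the $O(K^2/n)$ bias and its fluctuations --- is swamped by the signal, which is precisely what \textbf{SNR-DSBM} is calibrated to guarantee, while isolating the single signal-times-noise cross term, of order $\sqrt{|b-\tau_n|D_n/n}$, that pins down the rate. This bookkeeping is presumably done in full in the proof of Theorem~\ref{lem: b1}, where the clustering step contributes additional terms --- from nodes mis-assigned to communities --- that must be absorbed under the extra misclassification-rate assumption; the present statement is its specialization with that error identically zero.
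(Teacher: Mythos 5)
Your proposal is correct and follows essentially the same route as the paper: the paper's (omitted) proof is the no-misclassification specialization of the proof of Theorem \ref{lem: b1}, which likewise establishes a drift lower bound of order $|b-\tau_n|\,\|\text{Ed}_z(\Lambda)-\text{Ed}_w(\Delta)\|_F^2$ (with SNR-DSBM absorbing the $O(K^2/n)$ cost of the fitted block probabilities) and a fluctuation bound of order $\sqrt{|b-\tau_n|\,\|\text{Ed}_z(\Lambda)-\text{Ed}_w(\Delta)\|_F^2/n}$, then concludes via the van der Vaart--Wellner rate lemma (Lemma \ref{lem: wvan1}) with Doob's inequality. Your explicit shell-peeling with H\'ajek--R\'enyi/Bernstein is just the unpackaged form of that lemma, and your projection-based drift identity matches the paper's block-counting computation.
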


\begin{remark} \label{rem: sparseknowncomm1}
As stated in Theorem \ref{thm: dsbmknown},  the convergence rate of the appropriately centered and scaled change-point estimator $\rho_{m,n}^{-1}({\tilde{\tau}}_{m,n} - \tau_{m,n})$ is $n^{-1} ||\text{Ed}_z( \Lambda) - \text{Ed}_w(\Delta) ||_{F}^{-2}$.  In the presence of SNR-DSBM, Sparse-DSBM ensures that the convergence rate $n^{-1} ||\text{Ed}_z( \Lambda) - \text{Ed}_w(\Delta) ||_{F}^{-2}$ decays to $0$.   Sparse-DSBM can be weakened further under stronger signal-to-noise condition, for example see Remark \ref{rem: sparseunknowncomm2}. 
\end{remark}

\begin{remark} 
In the ensuing Theorem \ref{lem: b1}, we will establish that  the DSBM change-point estimator $\tilde{\tau}_{m,n}$  with an {\em unknown community 
structure} (that needs to be estimated from the available data) has exactly the same convergence rate as the one posited in Theorem \ref{thm: dsbmknown}. However, a much stronger identifiability condition than SNR-DSBM and Sparse-DSBM is needed, since more parameters are involved.
\end{remark}

Recall the estimates in (\ref{eqn: estimateb}) given by $\tilde{\Lambda} = (( \tilde{\lambda}_{ab,z,(\tilde{\tau}_{m,n}, n),m}))_{K \times K}$ and \\$\tilde{\Delta}  =(( \tilde{\delta}_{ab,w,(\tilde{\tau}_{m,n}, n),m}))_{K \times K}$. The edge probability matrices $\text{Ed}_{z}(\Lambda)$ and $\text{Ed}_{w}(\Delta)$ can also be estimated by $\text{Ed}_{z}(\tilde{\Lambda})$ and $\text{Ed}_{w}(\tilde{\Delta})$, respectively. The following Theorem provides the convergence rate of the corresponding estimators. Its proof is similar (and structurally simpler) to the proof of Theorem \ref{thm: c3} where we deal with unknown community structures and hence omitted.

\begin{theorem} \label{thm: b2} \textbf{(Convergence rate of edge probabilities when $z$ and $w$ are known)}
\\
Suppose SNR-DSBM holds.  Let $\mathcal{S}_{m,n} = \min(\min_{u} s_{u,z}, \min_{u} s_{u,w})$. Then
\begin{eqnarray}
&& \frac{1}{K^2}||\tilde{\Lambda} - \Lambda||_F^2,\ \ \frac{1}{K^2}||\tilde{\Delta} - \Delta||_F^2 = O_{\text{P}}\left( \frac{I(n>1)\rho_{m,n}^{2}}{n^2 ||\text{Ed}_z(\Lambda)-\text{Ed}_w(\Delta) ||_F^4} + \frac{\rho_{m,n}\log K}{n \mathcal{S}_{m,n}^2}\right),\ \  \nonumber \\
&&  \frac{1}{m^2} ||\text{Ed}_{z}(\tilde{\Lambda}) - \text{Ed}_{z}(\Lambda)||_F^2, \ \  \frac{1}{m^2}||\text{Ed}_{w}(\tilde{\Delta}) - \text{Ed}_{w}(\Delta)||_F^2 \nonumber \\
&& \hspace{4cm}= O_{\text{P}}\left(\frac{I(n>1)\rho_{m,n}^2}{n^2 ||\text{Ed}_z(\Lambda)-\text{Ed}_w(\Delta) ||_F^4}+\frac{\rho_{m,n}\log m}{n \mathcal{S}_{m,n}^2}\right).
\end{eqnarray}
\end{theorem}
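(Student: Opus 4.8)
The plan is to deduce everything from two facts: the localization rate $n\,||\text{Ed}_z(\Lambda)-\text{Ed}_w(\Delta)||_F^2\,(\tilde\tau_n-\tau_n)=O_{\text{P}}(1)$ of Theorem~\ref{thm: dsbmknown}, and a Hoeffding/Bernstein bound for block averages of independent Bernoulli variables. Fix a block pair $(u,v)$; abbreviate $s_u=s_{u,z}$, $s_v=s_{v,z}$; write $\hat\lambda_{uv}:=\tilde\lambda_{uv,z,(\tilde\tau_n,n)}$ for the corresponding entry of $\tilde\Lambda$ and $\bar\lambda_{uv}:=\tilde\lambda_{uv,z,(\tau_n,n)}$ for the same block average evaluated at the \emph{true} change point; and start from the decomposition
\[
\hat\lambda_{uv}-\lambda_{uv}\;=\;\underbrace{\big(\hat\lambda_{uv}-\bar\lambda_{uv}\big)}_{\text{localization error}}\;+\;\underbrace{\big(\bar\lambda_{uv}-\lambda_{uv}\big)}_{\text{estimation error at }\tau_n}.
\]
I would bound the two pieces separately and recombine with $(a+b)^2\le 2a^2+2b^2$. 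The edge-probability claims will then follow from the block-level ones via the elementary inequality $||\text{Ed}_z(\tilde\Lambda)-\text{Ed}_z(\Lambda)||_F^2=\sum_{i\neq j}(\hat\lambda_{z(i)z(j)}-\lambda_{z(i)z(j)})^2\le\sum_{u,v}s_us_v(\hat\lambda_{uv}-\lambda_{uv})^2$ (and similarly for $w,\Delta$), together with the identities $\sum_us_u=m$, $\sum_i s_{z(i)}^{-1}=K$ and the size bound $m\ge K\mathcal{S}_n$.

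For the localization error, assume without loss of generality $\tilde\tau_n\ge\tau_n$ and split the time sum defining $\hat\lambda_{uv}$ at $n\tau_n$: this gives $\hat\lambda_{uv}=(\tau_n/\tilde\tau_n)\,\bar\lambda_{uv}+R_{uv}$ with $R_{uv}$ the normalized sum of the $A_{ij,(t,n)}$ over the strip $n\tau_n<t\le n\tilde\tau_n$, so $0\le R_{uv}\le(\tilde\tau_n-\tau_n)/\tilde\tau_n$. Since every such average lies in $[0,1]$ and $\tilde\tau_n\ge c^*$,
\[
|\hat\lambda_{uv}-\bar\lambda_{uv}|\;=\;\Big|-\tfrac{\tilde\tau_n-\tau_n}{\tilde\tau_n}\bar\lambda_{uv}+R_{uv}\Big|\;\le\;\tfrac{2}{c^*}\,|\tilde\tau_n-\tau_n|,
\]
a \emph{deterministic} bound uniform in $(u,v)$; this is the only place $\tilde\tau_n$ enters, and handling it pathwise is precisely what lets me avoid proving a uniform-in-$b$ version of the concentration bound. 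Hence $\frac{1}{K^2}\sum_{u,v}(\hat\lambda_{uv}-\bar\lambda_{uv})^2\le\frac{4}{(c^*)^2}(\tilde\tau_n-\tau_n)^2$, which by Theorem~\ref{thm: dsbmknown} is $O_{\text{P}}\!\big(1/(n^2||\text{Ed}_z(\Lambda)-\text{Ed}_w(\Delta)||_F^4)\big)$, the first term of the stated rate; for the edge version the same inequality together with $\sum_{u,v}s_us_v=m^2$ gives $\frac{1}{m^2}\sum_{u,v}s_us_v(\hat\lambda_{uv}-\bar\lambda_{uv})^2\le\frac{4}{(c^*)^2}(\tilde\tau_n-\tau_n)^2$, again of order $O_{\text{P}}(1/(n^2||\text{Ed}_z(\Lambda)-\text{Ed}_w(\Delta)||_F^4))$. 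The factor $I(n>1)$ simply records that for $n=1$ there is no strip and this term is absent.

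For the estimation error at $\tau_n$, observe that $\bar\lambda_{uv}$ is $(n\tau_n s_us_v)^{-1}$ times a sum of independent $\{0,1\}$-valued summands of mean $\lambda_{uv}$, except that the $i=j$ terms (only when $u=v$) are identically $0$; thus $\mathbb{E}\bar\lambda_{uv}=\lambda_{uv}-\lambda_{uu}I(u=v)/s_u$, and a Hoeffding bound plus a union bound over the at most $K^2$ block pairs gives $\max_{u,v}|\bar\lambda_{uv}-\mathbb{E}\bar\lambda_{uv}|=O_{\text{P}}\!\big(\sqrt{\log K/(n\tau_n\mathcal{S}_n^2)}\big)$. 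Since $\tau_n>c^*$ and the self-loop biases are of strictly smaller order under the model's regularity conditions, $\frac{1}{K^2}\sum_{u,v}(\bar\lambda_{uv}-\lambda_{uv})^2=O_{\text{P}}(\log K/(n\mathcal{S}_n^2))$, the second term; combining with the previous paragraph establishes the first display of the theorem. For the edge-probability bound I would union instead over the $m^2$ node pairs, so the concentration rate for block $(u,v)$ becomes $\sqrt{\log m/(n\tau_n s_us_v)}$; then $\frac{1}{m^2}\sum_{u,v}s_us_v(\bar\lambda_{uv}-\mathbb{E}\bar\lambda_{uv})^2\le\frac{K^2}{m^2}\,O_{\text{P}}(\log m/(n\tau_n))=O_{\text{P}}(\log m/(n\mathcal{S}_n^2))$ using $m\ge K\mathcal{S}_n$ and $\tau_n>c^*$, the self-loop terms again being negligible. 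The statements for $\tilde\Delta$ and $\text{Ed}_w(\tilde\Delta)$ are obtained by the identical argument, splitting the post-change average at $\tau_n$ and using $1-\tau_n,\,1-\tilde\tau_n\ge c^*$ in place of $\tau_n,\tilde\tau_n\ge c^*$.

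The main obstacle is organizational rather than conceptual: the crucial step is the decomposition above, which quarantines all the randomness of $\tilde\tau_n$ into a term controllable \emph{pathwise}, so that the concentration argument only has to be run at the fixed time $\tau_n$ instead of uniformly over $b$. After that, the work lies in keeping the self-loop corrections visibly negligible and in correctly carrying the block-size weights $s_us_v$ when passing between the $K\times K$ and $m\times m$ Frobenius norms, where the identities $\sum_i s_{z(i)}^{-1}=K$ and $m\ge K\mathcal{S}_n$ do the accounting; the concentration step itself I expect to be routine apart from the mild bookkeeping caused by symmetry when $u=v$.
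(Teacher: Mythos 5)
Your argument is essentially the paper's own: the paper establishes this result via the identical decomposition $(\tilde{\lambda}_{ab,z,(\tilde{\tau}_{n}, n)} - \lambda_{ab})^2 \leq 2(\tilde{\lambda}_{ab,z,(\tilde{\tau}_{n}, n)}-\tilde{\lambda}_{ab,z,({\tau}_{n}, n)})^2 + 2(\tilde{\lambda}_{ab,z,({\tau}_{n}, n)}-\lambda_{ab})^2$, bounding the first (localization) term by $(\tilde{\tau}_n-\tau_n)^2$ via Theorem \ref{thm: dsbmknown} and the second by sub-Gaussian concentration of the block averages at the true change point with a union bound, exactly as you propose. The only point where you go beyond the paper is in explicitly recording the self-loop bias $-\lambda_{uu}I(u=v)/s_{u,z}$ of the diagonal-block estimators, which the paper's own computations silently omit; your brief dismissal of that term is therefore no less careful than the reference argument.
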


Note that $\tilde{\Lambda} = (( \tilde{\lambda}_{ab,z,(\tilde{\tau}_{m,n}, n),m}))_{K \times K}$. To compute the rate for $\frac{1}{K^2}||\tilde{\Lambda} - \Lambda||_F^2$, we have  $(\tilde{\lambda}_{ab,z,(\tilde{\tau}_{m,n}, n),m} - \lambda_{ab})^2 \leq 2(\tilde{\lambda}_{ab,z,(\tilde{\tau}_{m,n}, n),m}-\tilde{\lambda}_{ab,z,({\tau}_{m,n}, n),m})^2 + 2(\tilde{\lambda}_{ab,z,({\tau}_{m,n}, n),m}-\lambda_{ab})^2 \equiv T_1 + T_2$. It is easy to see that the first term $T_1$ is dominated by $(\tilde{\tau}_{m,n} -\tau_{m,n})^2$ and thus by Theorem \ref{thm: dsbmknown}, $(\tilde{\tau}_{m,n} -\tau_{m,n})^2 = O_{\text{P}}\left( \frac{I(n>1)\rho_{m,n}^2}{n^2 ||\text{Ed}_z(\Lambda)-\text{Ed}_w(\Delta) ||_F^4}\right)$.  
Moreover, the rate of $T_2$ is $\frac{\rho_{m,n}\log K}{n \mathcal{S}_{m,n}^2}$. Details are given in Section \ref{subsec: c3}. Similar arguments work for the other matrices presented in Theorem \ref{thm: b2}. 

\begin{remark} \label{rem: parameteroracle}
\textbf{(Rate for $n=1$)}. If $n=1$, then there is no change-point and $T_1$ does not appear. In this case, we have only one community structure (say) $z$ and one community connection matrix (say) $\Lambda$. Moreover,  the number of communities $K=K_m$, sparsity parameter $\rho_m = \max_{1 \leq u, v \leq K} \lambda_{uv}$ and the minimum block size $\mathcal{S}_m = \min_{u} s_{u,z}$ depend only on $m$.  Estimation of $\Lambda$ for $n=1$ is studied in \cite{ZLZ2015} when the community structure $z$ is unknown. In this remark, we assume that $z$ is known. We estimate $\Lambda$ by $\tilde{\Lambda} = (( \tilde{\lambda}_{ab,z,(1/n, n),m}))_{K \times K}$. Then,   
\begin{eqnarray}
&& \frac{1}{K^2}||\tilde{\Lambda} - \Lambda||_F^2 
= O_{\text{P}}\left(  \frac{\rho_m\log K}{\mathcal{S}_m^2}\right),\ \  
\frac{1}{m^2} ||\text{Ed}_{z}(\tilde{\Lambda}) - \text{Ed}_{z}(\Lambda)||_F^2 
= O_{\text{P}}\left(\frac{\rho_m\log m}{ \mathcal{S}_m^2}\right). \nonumber
\end{eqnarray}
Similar results for the case of unknown communities are discussed in Remark \ref{rem: parameter} and Section \ref{sec:discuss}. 
\end{remark}
In Theorem \ref{thm: c3}, we establish the results for the same quantities in the case of unknown community structures. It will be seen that the convergence rate of $\tilde{\Lambda}$ and $\tilde{\Delta}$ given above, is much sharper compared to the case of unknown community structures, despite using repeated observations in the latter one; see also discussion in Remark \ref{rem: paraestimation}. 

The real problem of interest is when the community structure is {\bf unkown} and needs to be estimated from the observed sequence of adjacency matrices along with the change-point. A standard strategy in the change-point analysis is to optimize the least-squares criterion function $\tilde{L}(b,z,w,\Lambda,\Delta)$ posited above with respect to {\em all} the model parameters. This becomes challenging both computationally since one needs to find a good assignment of nodes to communities, and technically, since for any time point away from the true change-point the node assignment problem needs to be solved under a misspecified model; namely, the available adjacency matrices are generated according to both the pre- and post-change-point community connection probability matrices.

A natural estimator of $\tau_{m,n}$ can be obtained by solving 
\begin{eqnarray}
\tilde{\tilde{\tau}}_{m,n} &=& \arg \min_{b \in (c^{*},1-c^{*})} \tilde{L} (b,\tilde{z}_{b,n,m},\tilde{w}_{b,n,m},\tilde{\Lambda}_{\tilde{z}_{b,n,m},(b,n),m},\tilde{\Delta}_{\tilde{w}_{b,n,m},(b,n),m}),  \label{eqn: estimateintro}
\end{eqnarray}
where $\tilde{z}_{b,n,m}$ and $\tilde{w}_{b,n,m}$ are obtained using the clustering algorithm from \cite{B2017} (details below). While other clustering algorithms can also be employed, and are discussed in Section \ref{sec:discuss}, all clustering algorithms incur some degree of misclassification (while assigning nodes to communities) which must be suitably controlled by an appropriate assumption. The employed clustering algorithm requires a simpler and somewhat easier assumption on the misclassification rate, compared to other available clustering methods. 
\vskip 5pt
\noindent \textbf{Clustering Algorithm I}: (proposed in \cite{B2017})
\begin{enumerate}
\item
Obtain sums of the adjacency matrices before and after $b$ as $B_1 = \sum_{t=1}^{nb} A^{(t)}$ and $B_2 = \sum_{t=nb +1}^{n} A^{(t)}$ respectively. 
\item
Obtain $\hat{U}_{m \times K}$ and $\hat{V}_{m \times K}$ consisting of the leading $K$ eigenvectors of $B_1$ and $B_2$, respectively,  corresponding to its largest absolute eigenvalues.
\item
Use an $(1+\epsilon)$ approximate $K$-means clustering algorithm on the row
vectors of $\hat{U}$ and $\hat{V}$ to obtain $\tilde{z}_{b,n,m}$ and $\tilde{w}_{b,n,m}$ respectively. 
\end{enumerate}
Note that in Step $3$ above,   an $(1+\epsilon)$ approximate $K$-means clustering procedure is employed, instead of the  $K$-means. It is known that finding a global minimizer for the $K$-means clustering problem is NP-hard (for example see \cite{aloise2009np}). However,  efficient algorithms such as  $(1+\epsilon)$ approximate $K$-means clustering provide  an 
approximate solution, with the value of the objective function being minimized to within a constant fraction of the optimal value (see \cite{kumar2004simple} for more details).

\noindent
{\em Computational complexity of the procedure:}
Note that for each $b \in (c^{*},1-c^{*})$, the complexity of computing $\tilde{z}_{b,n,m}$ (or $\tilde{w}_{b,n,m}$) and $\tilde{\Lambda}_{\tilde{z}_{b,n,m},(b,n),m}$ (or $\tilde{\Delta}_{\tilde{w}_{b,n,m},(b,n),m}$) is $O(m^3)$ and  $O(m^2n)$, respectively.  Hence,  $\tilde{L}(b,\tilde{z}_{b,n,m},\tilde{w}_{b,n,m},\tilde{\Lambda}_{\tilde{z}_{b,n,m},(b,n),m},\tilde{\Delta}_{\tilde{w}_{b,n,m},(b,n),m})$ at $b$ has computational complexity  $O(m^3+m^2n)$. Some calculations show that only finitely many binary operations are needed to update  $\tilde{\Lambda}_{\tilde{z}_{b,n,m},(b,n),m}$ and $\tilde{\Delta}_{\tilde{w}_{b,n,m},(b,n),m}$ for the next available time point. However, computing $\tilde{z}_{b,n,m}$ and $\tilde{w}_{b,n,m}$ requires $O(m^3)$  operations for each time point. Therefore, the computational complexity for obtaining $\tilde{L}(b,\tilde{z}_{b,n,m},\tilde{w}_{b,n,m},\tilde{\Lambda}_{\tilde{z}_{b,n,m},(b,n),m},\tilde{\Delta}_{\tilde{w}_{b,n,m},(b,n),m})$ for $n$-many time points is $O(m^3n + m^2 n)=O(m^3n)$.

To establish consistency results for $\tilde{\tilde{\tau}}_{m,n}$, an additional assumption on the misclassification rate of $\tilde{z}_{b,n,m}$ and $\tilde{w}_{b,n,m}$
is needed, given next. We start with the following definition.
\begin{definition} \textbf{(Misclassification rate)} A node is considered as misclassified, if it is not allocated to its true community. The misclassification rate corresponds to the fraction of misclassified nodes. Let $\mathcal{M}_{(z,\tilde{z}_{b,n,m})}$ and $\mathcal{M}_{(w,\tilde{w}_{b,n,m})}$ be the misclassification rates of estimating $z$ and $w$ by $\tilde{z}_{b,n,m}$ and $\tilde{w}_{b,n,m}$, respectively. Then,
\begin{eqnarray}
\mathcal{M}_{(z,\tilde{z}_{b,n,m})} &=& 
\min_{\pi \in S_k} \sum_{i=1}^{m} \frac{I(\tilde{z}_{b,n,m}(i) \neq \pi(z(i)))}{s_{z,\pi(z(i))}},\nonumber \\
\mathcal{M}_{(w,\tilde{w}_{b,n,m})} &=& \min_{\pi \in S_k} \sum_{i=1}^{m} \frac{I(\tilde{w}_{b,n,m}(i) \neq \pi(w(i)))}{s_{w,\pi(w(i))}} 
\label{eqn: mdefine}
\end{eqnarray}
where $S_K$ denotes the set of all permutations of $\{1,2,\ldots,K\}$. 
\end{definition}

\noindent Define $$\mathcal{M}_{b,n,m} = \max(\mathcal{M}_{(z,\tilde{z}_{b,n,m})},\mathcal{M}_{(w,\tilde{w}_{b,n,m})}).$$
Consider the following assumption.
\vskip 5pt
\noindent \textbf{(NS)} $\Lambda$ and $\Delta$ are non-singular. 
\vskip 5pt
\noindent (NS) implies that there are exactly $K$ non-empty  communities in DSBM and hence we can use an $(1+\epsilon)$ approximate $K$-clustering algorithm. If (NS) does not hold, then we have $K^\prime$ $(<K)$ non-empty communities and an $(1+\epsilon)$ approximate $K^\prime$-clustering algorithm performs better.
\vskip 5pt
\noindent The following theorem provides the convergence rate of $\mathcal{M}_{b,n}$.  Its proof is given in Section \ref{subsec: mismiscluster}. Let $\nu_{m,n}$ denote the minimum between the smallest non-zero singular values of $\text{Ed}_{z}(\Lambda)$ and $\text{Ed}_{w}(\Delta)$.

\begin{theorem} \label{thm: mismiscluster}
Suppose (NS) holds.  Then, for all $b \in (c^*,1-c^*)$, we have
\begin{eqnarray}
\mathcal{M}_{b,n,m} = O_{\text{P}}\left( \frac{K}{n \nu_{m,n}^2}(\tau_{m,n} m + |\tau_{m,n}-b|\ ||\text{Ed}_{z}(\Lambda)-\text{Ed}_{w}(\Delta)||_F^2)\right). \nonumber
\end{eqnarray}
\end{theorem}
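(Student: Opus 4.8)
Since $\mathcal{M}_{b,n}$ involves only the clustering step, the plan is to fix $b\in(c^*,1-c^*)$ and bound $\mathcal{M}_{(z,\tilde{z}_{b,n})}$; the bound for $\mathcal{M}_{(w,\tilde{w}_{b,n})}$ then follows by the identical argument with $(z,\Lambda,\tau_n,b,B_1)$ replaced by $(w,\Delta,1-\tau_n,1-b,B_2)$, and $\mathcal{M}_{b,n}$ is the larger of the two. Recall that $\tilde{z}_{b,n}$ is produced by $(1+\epsilon)$-approximate $K$-means on the rows of the top $K$ eigenvectors of $B_1=\sum_{t=1}^{nb}A^{(t)}$. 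The first step is to pick the right ``oracle'' comparison matrix, namely $P_1:=nb\,\text{Ed}_z(\Lambda)$: by (NS), $\Lambda$ is non-singular, so $\text{Ed}_z(\Lambda)$ has exactly $K$ nonzero eigenvalues and its leading eigenvectors are constant on the blocks of $z$, whence exact $K$-means on the rows of the top-$K$ eigenvectors of $P_1$ recovers $z$ (up to relabelling); moreover $\sigma_K(P_1)$ equals $nb$ times the smallest nonzero singular value of $\text{Ed}_z(\Lambda)$, so $\sigma_K(P_1)\ge nb\,\nu_m$. I would then invoke a Lei--Rinaldo-type spectral-clustering error bound (in the form used for the DSBM in \cite{B2017}), which on the event that its right side is below a fixed constant gives
\begin{eqnarray}
\mathcal{M}_{(z,\tilde{z}_{b,n})}\ \le\ C(2+\epsilon)\,\frac{K\,\|B_1-P_1\|_{\text{op}}^2}{\sigma_K(P_1)^2}, \nonumber
\end{eqnarray}
reducing the problem to controlling $\|B_1-P_1\|_{\text{op}}$.

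Write $B_1-P_1=(B_1-\text{E}\,B_1)+(\text{E}\,B_1-P_1)$. The stochastic part $B_1-\text{E}\,B_1=\sum_{t=1}^{nb}(A^{(t)}-\text{E}\,A^{(t)})$ is a symmetric matrix whose entries are independent centred sums of $nb\le n$ Bernoullis; a Bernstein-type inequality for such matrices (with the usual max-degree control available in the regimes considered) yields $\|B_1-\text{E}\,B_1\|_{\text{op}}=O_{\text{P}}(\sqrt{nb\,m})=O_{\text{P}}(\sqrt{nm})$, which, since $b,\tau_n\in(c^*,1-c^*)$, contributes $O_{\text{P}}\!\big(\tfrac{K\tau_n m}{n\nu_m^2}\big)$ — the first term of the claimed rate. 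The bias part vanishes when $b\le\tau_n$ (every summand of $B_1$ is then drawn from $\text{SBM}(z,\Lambda)$), and for $b>\tau_n$ equals $n(b-\tau_n)\big(\text{Ed}_w(\Delta)-\text{Ed}_z(\Lambda)\big)$; in either case $\|\text{E}\,B_1-P_1\|_{\text{op}}\le\|\text{E}\,B_1-P_1\|_F=n|\tau_n-b|\,\|\text{Ed}_z(\Lambda)-\text{Ed}_w(\Delta)\|_F$, and, after the sharpening discussed below, this contributes $O_{\text{P}}\!\big(\tfrac{K|\tau_n-b|\,\|\text{Ed}_z(\Lambda)-\text{Ed}_w(\Delta)\|_F^2}{n\nu_m^2}\big)$. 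Adding the two contributions and maximising over $\tilde{z}_{b,n},\tilde{w}_{b,n}$ gives the stated rate.

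The crux is the sharp control of the bias contribution. Plugging the crude estimate $\|\text{E}\,B_1-P_1\|_{\text{op}}\le n|\tau_n-b|\,\|\text{Ed}_z(\Lambda)-\text{Ed}_w(\Delta)\|_F$ directly into the clustering bound only produces a term of order $\tfrac{K(\tau_n-b)^2\,\|\text{Ed}_z(\Lambda)-\text{Ed}_w(\Delta)\|_F^2}{\nu_m^2}$, which is weaker than the claimed bound once $|\tau_n-b|$ exceeds the resolution scale $1/n$; one has to improve it by a factor of order $n|\tau_n-b|$. To do so one must not discard the contaminated segment wholesale: for $b$ near $\tau_n$, $\text{E}\,B_1$ is dominated by its correct-regime summand $n\tau_n\,\text{Ed}_z(\Lambda)$ (a rank-$K$ matrix with singular-value gap of order $n\tau_n\nu_m$), so that only the component of the perturbation $n(b-\tau_n)\text{Ed}_w(\Delta)$ that is not aligned with the column space of $\text{Ed}_z(\Lambda)$ — which is bounded by $n|b-\tau_n|\,\|\text{Ed}_z(\Lambda)-\text{Ed}_w(\Delta)\|$ in the relevant norm — actually rotates the leading eigenspace; carrying this eigenspace-perturbation analysis out together with the stochastic fluctuation of the contaminated block, at the precision needed to keep track of $K$, $\nu_m$ and the block sizes, is the main obstacle. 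The remaining ingredients — matrix concentration, Davis--Kahan, and the approximate-$K$-means constant — are standard and can be cited. Finally, it is worth noting that this bound is informative only when $b$ is close to $\tau_n$ (otherwise its right side exceeds the trivial bound on $\mathcal{M}_{b,n}$), which is precisely the regime in which it is later applied, after $\tilde{\tilde{\tau}}_{n}$ has been localised.
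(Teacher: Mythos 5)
Your architecture---reduce to a Lei--Rinaldo-type clustering bound for each of the two community structures separately, take $P_1=nb\,\text{Ed}_z(\Lambda)$ as the oracle with $\sigma_K(P_1)\ge nb\,\nu_m$ under (NS), and split $B_1-P_1$ into a stochastic fluctuation plus a deterministic contamination term---is exactly the paper's. The stochastic part is handled the same way in both (operator-norm concentration of the summed centred adjacency matrices at rate $\sqrt{nm}$, yielding the $K\tau m/(n\nu_m^2)$ term). The problem is the bias term, and you have diagnosed it correctly: the direct route yields $K(\tau_n-b)^2\|\text{Ed}_z(\Lambda)-\text{Ed}_w(\Delta)\|_F^2/\nu_m^2$, which exceeds the claimed $K|\tau_n-b|\,\|\text{Ed}_z(\Lambda)-\text{Ed}_w(\Delta)\|_F^2/(n\nu_m^2)$ by a factor of $n|\tau_n-b|$ as soon as $b$ is more than one grid point from $\tau_n$. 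Everything you then say about closing that gap is programmatic: you assert that an eigenspace-perturbation argument keeping only the component of $n(b-\tau_n)\text{Ed}_w(\Delta)$ not aligned with the column space of $\text{Ed}_z(\Lambda)$ recovers the missing factor, but you give no mechanism for it. Davis--Kahan applied to a deterministic perturbation of size $n|b-\tau_n|\,\|\text{Ed}_z(\Lambda)-\text{Ed}_w(\Delta)\|$ against a spectral gap of order $n\nu_m$ still produces a squared ratio, hence $(b-\tau_n)^2$, no matter how you project; nothing in the sketch manufactures the additional $1/(n|b-\tau_n|)$. The step you yourself label ``the main obstacle'' is precisely the step that is missing, so the proposal does not prove the stated bound.

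For comparison, the paper attempts no such sharpening. It quotes Lemmas 5.1 and 5.3 of Lei and Rinaldo directly in the form $\mathcal{M}_{b,n}\le C\frac{K}{n\nu_m^2}\bigl[\|\frac1n\sum_{t\le n\tau}(A_{t,n}-\text{Ed}_z(\Lambda))\|_F^2+\|\frac1n\sum_{n\tau<t\le nb}(A_{t,n}-\text{Ed}_w(\Delta))\|_F^2+\|\frac1n\sum_{n\tau<t\le nb}(\text{Ed}_z(\Lambda)-\text{Ed}_w(\Delta))\|_F^2\bigr]$, bounds the two stochastic brackets by $O_{\text{P}}(m)$ via their Theorem 5.2, and the third bracket equals $(b-\tau)^2\|\text{Ed}_z(\Lambda)-\text{Ed}_w(\Delta)\|_F^2\le|\tau-b|\,\|\text{Ed}_z(\Lambda)-\text{Ed}_w(\Delta)\|_F^2$. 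So the $|\tau-b|$ in the statement is just the crude inequality $(\tau-b)^2\le|\tau-b|$, and the extra $1/n$ lives in the quoted prefactor. Your more explicit bookkeeping (unnormalised $B_1$ against $\sigma_K(P_1)^2=(nb\nu_m)^2$) gives prefactor $K/\nu_m^2$ on the $\frac1n$-normalised deviations, not $K/(n\nu_m^2)$; this tension is worth resolving by pinning down the exact scaling the cited lemmas deliver, because either the prefactor already supplies the $1/n$ you are missing (and no sharpening is needed) or the bias term genuinely scales as $n(\tau-b)^2$ rather than $|\tau-b|$ inside the bracket. As written, your proposal establishes neither alternative.
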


\begin{remark}
$\nu_{m,n}$ becomes closer to $0$ (that is we have less signal) as the regime becomes more sparse. Thus sparse regime has high misclassification rate for community detection. 
\end{remark}

\noindent 
\begin{remark}
To establish consistency of $\tilde{\tilde{\tau}}_{m,n}$,  we require that the misclassification rate $\mathcal{M}_{b,n,m}$   decays  faster than  $\rho_{m,n}^{-1}n^{-1}||\text{Ed}_{z}(\Lambda) -\text{Ed}_{w}(\Delta)||_F$;  see the proof of Theorem \ref{lem: b1}  and Remark \ref{rem: proof}  for technical details.  By the identifiability condition SNR-DSBM and Theorem \ref{thm: mismiscluster},  we have
\begin{eqnarray}
\mathcal{M}_{b,n,m}\rho_{m,n} n||\text{Ed}_{z}(\Lambda)-\text{Ed}_{w}(\Delta)||_F^{-1} \leq  C \frac{K\rho_{m,n}}{\nu_{m,n}^2} (\frac{m\sqrt{n}}{K}o(1) + m) \leq  C\rho_{m,n} ( \frac{m\sqrt{n}}{\nu_{m,n}^2}o(1) + \frac{Km}{\nu_{m,n}^2}) \nonumber 
\end{eqnarray}
holds with probability tending to $1$. Consistency of $\tilde{\tilde{\tau}}_{m,n}$ can be achieved under the SNR-DSBM condition and the following assumption (A1). 
\end{remark}

\medskip
\noindent \textbf{(A1)}  $\frac{Km}{ \nu_{m,n}^2} \rho_{m,n} \to 0$ and $\frac{m\sqrt{n}}{\nu_{m,n}^2} \rho_{m,n}= O(1)$

We note that (A1) is compatible with the clustering algorithm employed in our procedure. Other clustering algorithms may also be used which would lead to modifications of (A1), as discussed in Section \ref{sec:discuss}.

\subsection{Theoretical properties of $\tilde{\tilde{\tau}}_{m,n}$} \label{sec:theory-clustering}

Our first result establishes the convergence rate of the proposed estimate of the change-point. 

\begin{theorem} \label{lem: b1} \textbf{(Convergence rate of $\tilde{\tilde{\tau}}_{m,n}$)} \\
Suppose SNR-DSBM, Sparse-DSBM, (NS) and (A1)  hold.  Then, 
$$n\rho_{m,n}^{-1} ||\text{Ed}_z( \Lambda) - \text{Ed}_w(\Delta) ||_{F}^2 (\tilde{\tilde{\tau}}_{m,n} - \tau_{m,n}) = O_{\text{P}}(1).$$ 
\end{theorem}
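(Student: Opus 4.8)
The plan is to follow the standard argument-max machinery for change point problems, adapted to handle the extra layer of estimation coming from the unknown community structure. Write $D_n^2 \equiv \|\text{Ed}_z(\Lambda)-\text{Ed}_w(\Delta)\|_F^2$ for the squared signal. The target is a rate bound: I want to show $n D_n^2 (\tilde{\tilde\tau}_n - \tau_n) = O_{\text{P}}(1)$, equivalently that for every $\varepsilon>0$ there is a constant $M$ with $\text{P}(n D_n^2 |\tilde{\tilde\tau}_n - \tau_n| > M)<\varepsilon$ for all large $n$. First I would define the (random) criterion process $b \mapsto \tilde L(b,\tilde z_{b,n},\tilde w_{b,n},\tilde\Lambda_{\tilde z_{b,n},(b,n)},\tilde\Delta_{\tilde w_{b,n},(b,n)})$ and its "population'' counterpart — the analogous quantity evaluated at the true parameters $(z,w,\Lambda,\Delta)$, or a deterministic drift function obtained by taking expectations. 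The key identity to establish is a lower bound on the excess criterion: for $b$ on one side of $\tau_n$, the difference between the criterion at $b$ and at $\tau_n$ is, up to a stochastic fluctuation term, bounded below by something of order $|b-\tau_n|\, D_n^2$ (this is because mis-segmenting by a fraction $|b-\tau_n|$ of the time points forces one to fit those adjacency matrices with the wrong edge-probability matrix, incurring a squared-error penalty proportional to $D_n^2$ per misassigned time point).

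The main steps, in order, would be: (1) Reduce to a comparison between $b$ and $\tau_n$: since $\tilde{\tilde\tau}_n$ minimizes the criterion, $\tilde L$ at $\tilde{\tilde\tau}_n$ is at most $\tilde L$ at $\tau_n$, so it suffices to show the criterion has a strict-enough "valley'' at $\tau_n$. (2) Decompose the excess criterion $\tilde L(b,\cdot) - \tilde L(\tau_n,\cdot)$ into a deterministic drift part and an empirical-process (fluctuation) part; the drift part is handled by a direct computation showing it dominates $c\,|b-\tau_n|\,D_n^2$ for some constant $c>0$, using that the profiled estimators $\tilde\Lambda_{\tilde z_{b,n},(b,n)}$, $\tilde\Delta_{\tilde w_{b,n},(b,n)}$ are block-averages and hence close to the true block probabilities up to the segmentation error and the clustering error. (3) Control the clustering error: invoke Theorem \ref{thm: mismiscluster} together with (NS) and (A1) to show that $\mathcal{M}_{b,n} = o_{\text{P}}(n^{-1}D_n)$ uniformly in $b$, so that replacing $\tilde z_{b,n},\tilde w_{b,n}$ by the true structures perturbs the criterion by a negligible amount relative to the drift; this is precisely the role (A1) plays — it makes the misclassification contribution lower order than the signal. (4) Bound the fluctuation term by a maximal inequality over $b$, using Bernstein/Hoeffding bounds for the centered Bernoulli sums $\sum_t (A_{ij,(t,n)} - \text{Ed}_{z}(\Lambda)_{ij})$ and $m^2$-many such entries, yielding a fluctuation of order $\sqrt{|b-\tau_n|}\,D_n \cdot (\text{log factors})$ that is, on the relevant scale $|b-\tau_n| \gtrsim M/(nD_n^2)$, dominated by the linear-in-$|b-\tau_n|$ drift once $M$ is large. (5) Combine: on the event where clustering behaves well and the fluctuation is controlled, the excess criterion is positive for all $b$ with $n D_n^2|b-\tau_n|>M$, forcing $\tilde{\tilde\tau}_n$ into the $O((nD_n^2)^{-1})$-neighborhood of $\tau_n$. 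Because $b$ ranges over a grid of $n$ points (equivalently $\tilde{\tilde\tau}_n$ takes values in $\{1/n,\dots\}$), the supremum over $b$ is really a maximum over at most $n$ values, which keeps the union bound manageable.

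I expect the main obstacle to be step (3) combined with the uniformity requirement in step (4): the clustering estimates $\tilde z_{b,n},\tilde w_{b,n}$ are obtained under a \emph{misspecified} model for $b\neq\tau_n$ (the pooled sums $B_1,B_2$ mix pre- and post-change adjacency matrices), so one must argue that the leading-$K$-eigenvector / approximate-$K$-means pipeline still produces a clustering whose misclassification rate obeys the Theorem \ref{thm: mismiscluster} bound — and then feed that, \emph{uniformly over all $b \in (c^*,1-c^*)$}, into the criterion decomposition without the accumulated errors overwhelming the $|b-\tau_n| D_n^2$ drift near $b=\tau_n$. Managing the interplay between the $|\tau-b|\,D_n^2$ term inside the Theorem \ref{thm: mismiscluster} bound and the drift is the delicate accounting: for $b$ far from $\tau_n$ the misclassification can be comparatively large, but there the drift is also large, so one needs a careful region-by-region ($|b-\tau_n|$ small vs. moderate vs. order one) split. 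The remaining pieces — the drift lower bound and the Bernstein maximal inequality — are routine given the structure already set up in the paper, and I would lean on the machinery of \cite{BBM2017} / \cite{kosorok2008introduction} Section 14.5 for the empirical-process skeleton.
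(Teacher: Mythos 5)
Your proposal follows essentially the same route as the paper's proof: the paper also invokes the van der Vaart--Wellner rate theorem (Lemma \ref{lem: wvan1}) with $d_n(b,\tau_n)=\|\text{Ed}_z(\Lambda)-\text{Ed}_w(\Delta)\|_F\sqrt{|b-\tau_n|}$, establishes the drift lower bound of order $|b-\tau_n|\,\|\text{Ed}_z(\Lambda)-\text{Ed}_w(\Delta)\|_F^2$ by conditioning on the clustering output and absorbing the misclassification terms via Theorem \ref{thm: mismiscluster} together with SNR-DSBM and (A1) (exactly the $n^2\mathcal{M}_{b,n}^2=o(\|\cdot\|_F^2)$ accounting you describe), and controls the fluctuations by a maximal inequality. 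The only cosmetic difference is that the paper bounds the modulus of continuity via Doob's martingale inequality plus a fourth-cumulant variance computation rather than your Bernstein-plus-union-bound over the grid (the latter would introduce a spurious logarithmic factor unless one peels more carefully), but this does not change the substance of the argument.
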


\noindent
The proof of the theorem is given in Section \ref{subsec: b1}. 

The next result focuses on the misclassification rate for $\tilde{\tilde{z}} = \tilde{z}_{\tilde{\tilde{\tau}}_{m,n},n,m}$ and $\tilde{\tilde{w}} = \tilde{w}_{\tilde{\tilde{\tau}}_{m,n},n,m}$, respectively.

\begin{theorem} \label{thm: cluster} \textbf{(Rate of misclassification)}  \\
Suppose SNR-DSBM, Sparse-DSBM, (NS) and (A1)  hold.  Then,
\begin{eqnarray}
\mathcal{M}_{(z,\tilde{\tilde{z}})},\mathcal{M}_{(w,\tilde{\tilde{w}})} = O_{\text{P}}\left( \frac{Km}{n\nu_{m,n}^2}\right). \nonumber
\end{eqnarray}
\end{theorem}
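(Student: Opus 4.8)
\textbf{Proof proposal for Theorem \ref{thm: cluster}.}

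The plan is to invoke Theorem \ref{thm: mismiscluster} with $b$ replaced by the estimated change point $\tilde{\tilde{\tau}}_{n}$, and then control the resulting random bound using the convergence rate of $\tilde{\tilde{\tau}}_{n}$ from Theorem \ref{lem: b1}. First I would note that $\tilde{\tilde{z}} = \tilde{z}_{\tilde{\tilde{\tau}}_{n},n}$ and $\tilde{\tilde{w}} = \tilde{w}_{\tilde{\tilde{\tau}}_{n},n}$ are exactly the clustering outputs produced by Clustering Algorithm I evaluated at $b = \tilde{\tilde{\tau}}_{n}$, so that $\mathcal{M}_{(z,\tilde{\tilde{z}})} \le \mathcal{M}_{\tilde{\tilde{\tau}}_{n},n}$ and likewise for $w$. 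The subtlety is that Theorem \ref{thm: mismiscluster} gives a bound that holds for each \emph{fixed} $b \in (c^*,1-c^*)$, whereas $\tilde{\tilde{\tau}}_{n}$ is random; so I would either (i) appeal to a version of that bound that is uniform in $b$ (which the proof of Theorem \ref{thm: mismiscluster} should deliver, since the stochastic terms controlling the spectral perturbation of $B_1,B_2$ can be bounded uniformly over $b$ on the grid of $n$ time points), or (ii) condition on the event $\{|\tilde{\tilde{\tau}}_{n} - \tau_n| \le \eta_n\}$ for a suitable deterministic sequence $\eta_n$ and use monotonicity/continuity of the bound in $|\tau - b|$.

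The key computation is then to substitute $b = \tilde{\tilde{\tau}}_{n}$ into the bound
$$
\mathcal{M}_{\tilde{\tilde{\tau}}_{n},n} = O_{\text{P}}\!\left( \frac{K}{n\nu_m^2}\big(\tau m + |\tau - \tilde{\tilde{\tau}}_{n}|\,\|\text{Ed}_{z}(\Lambda)-\text{Ed}_{w}(\Delta)\|_F^2\big)\right),
$$
and to show the second term inside the parentheses is negligible relative to the first. By Theorem \ref{lem: b1}, $|\tilde{\tilde{\tau}}_{n} - \tau_n| = O_{\text{P}}\big(n^{-1}\|\text{Ed}_z(\Lambda)-\text{Ed}_w(\Delta)\|_F^{-2}\big)$, so
$$
|\tau - \tilde{\tilde{\tau}}_{n}|\,\|\text{Ed}_{z}(\Lambda)-\text{Ed}_{w}(\Delta)\|_F^2 = O_{\text{P}}(n^{-1}) = o(1) = o(m),
$$
since $m = m_n \to \infty$. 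Hence $\tau m$ dominates, $\tau \le 1-c^* $ is bounded, and the bound collapses to $\mathcal{M}_{(z,\tilde{\tilde{z}})}, \mathcal{M}_{(w,\tilde{\tilde{w}})} = O_{\text{P}}(Km/(n\nu_m^2))$, as claimed.

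The main obstacle I anticipate is the passage from a fixed-$b$ bound to one valid at the random argument $\tilde{\tilde{\tau}}_{n}$: one must verify that the $O_{\text{P}}(1)$ prefactor hidden in Theorem \ref{thm: mismiscluster} can be taken uniform over the (finitely many, namely at most $n$) admissible values of $b$, so that evaluating at $\tilde{\tilde{\tau}}_{n}$ is legitimate. I would handle this by tracing through the proof of Theorem \ref{thm: mismiscluster} in Section \ref{subsec: mismiscluster} and replacing each pointwise high-probability spectral-norm bound on $B_1 - \mathbb{E}B_1$ and $B_2 - \mathbb{E}B_2$ (and the associated Davis–Kahan / $K$-means approximation steps) with its uniform-over-$b$ counterpart, absorbing an extra $\log n$ factor if needed, which is harmless under SNR-DSBM and (A1). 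Everything else is a direct substitution of the rate from Theorem \ref{lem: b1}.
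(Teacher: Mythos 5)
Your proposal is correct and follows essentially the same route as the paper, which simply declares the result ``immediate'' from Theorems \ref{thm: mismiscluster} and \ref{lem: b1}: substitute $b=\tilde{\tilde{\tau}}_n$ into the misclassification bound and use $|\tilde{\tilde{\tau}}_n-\tau_n|\,\|\text{Ed}_z(\Lambda)-\text{Ed}_w(\Delta)\|_F^2=O_{\text{P}}(n^{-1})=o(m)$ so that the $\tau m$ term dominates. Your additional care about passing from a fixed-$b$ bound to the random argument $\tilde{\tilde{\tau}}_n$ (uniformity over the grid of at most $n$ candidate values) is a legitimate refinement that the paper leaves implicit.
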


\noindent
The proof of the Theorem is immediate from Theorems \ref{thm: mismiscluster} and \ref{lem: b1}.

Let $\tilde{\tilde{\Lambda}} = (( \tilde{\lambda}_{ab,\tilde{\tilde{z}},(\tilde{\tilde{\tau}}_{m,n}, n),m}))_{K \times K}$ and $\tilde{\tilde{\Delta}}  = (( \tilde{\delta}_{ab,\tilde{\tilde{w}},(\tilde{\tilde{\tau}}_{m,n}, n),m}))_{K \times K}$. The final result obtained is on the convergence rate of the community connection probability matrices
$\tilde{\tilde{\Lambda}}$ and $\tilde{\tilde{\Delta}}$, respectively. Let ${\mathcal{S}}_{m,n,\tilde{\tilde{z}}} = \min_{u} s_{u,\tilde{\tilde{z}}}$, $\mathcal{S}_{m,n,\tilde{\tilde{w}}} = \min_{u} s_{u,\tilde{\tilde{w}}}$ and $\tilde{\mathcal{S}}_{m,n} = \min(\mathcal{S}_{n,\tilde{\tilde{z}}},\mathcal{S}_{n,\tilde{\tilde{w}}})$.

\begin{theorem} \label{thm: c3} \textbf{(Convergence rate of edge probabilities $\tilde{\tilde{\Lambda}}$ and $\tilde{\tilde{\Delta}}$)} \\
Suppose SNR-DSBM, Sparse-DSBM, (NS) and (A1) hold. Further, for some positive sequence $\{\tilde{C}_{m,n}\}$, we have that $\tilde{\mathcal{S}}_{m,n} \geq \tilde{C}_{m,n}\ \forall m,n$ with probability $1$. Then,
\begin{eqnarray} 
\frac{1}{m^2} ||\text{Ed}_{\tilde{\tilde{z}}}(\tilde{\tilde{\Lambda}}) - \text{Ed}_{z}(\Lambda)||_F^2  &=& O_{\text{P}}\left( \left(\frac{Km}{n\nu_{m,n}^2}\right)^2 + \frac{I(n>1)\rho_{m,n}^2}{n^2 ||\text{Ed}_z(\Lambda)-\text{Ed}_w(\Delta) ||_F^4}+   \frac{\log m}{n \tilde{C}_{m,n}^2}\rho_{m,n} \right),\nonumber \\
\frac{1}{m^2} ||\text{Ed}_{\tilde{\tilde{w}}}(\tilde{\tilde{\Delta}}) - \text{Ed}_{w}(\Delta)||_F^2 
&=& O_{\text{P}}\left(\left(\frac{Km}{n\nu_{m,n}^2}\right)^2 +  \frac{I(n>1)\rho_{m,n}^2}{n^2 ||\text{Ed}_z(\Lambda)-\text{Ed}_w(\Delta) ||_F^4}+ \frac{\log m}{n \tilde{C}_{m,n}^2} \rho_{m,n}\right). \nonumber 
\end{eqnarray}
\end{theorem}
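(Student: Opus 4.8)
The aim is to control $\frac{1}{m^2}\|\text{Ed}_{\tilde{\tilde{z}}}(\tilde{\tilde{\Lambda}}) - \text{Ed}_z(\Lambda)\|_F^2$ by disentangling the three sources of error in the estimator: the estimated change point $\tilde{\tilde{\tau}}_n$ in place of $\tau_n$, the estimated labels $\tilde{\tilde{z}}$ in place of $z$, and the time-averaged Bernoulli noise present even under oracle change point and oracle labels. Writing $\bar{A}^{(b)} = \frac{1}{nb}\sum_{t=1}^{nb} A_{t,n}$ and letting $\Pi_{\zeta}$ denote the map sending a symmetric matrix to its block averages under a partition $\zeta$, one has $\text{Ed}_{\tilde{\tilde{z}}}(\tilde{\tilde{\Lambda}}) = \Pi_{\tilde{\tilde{z}}}(\bar{A}^{(\tilde{\tilde{\tau}}_n)})$ up to an $O(1/\tilde{C}_n)$ correction on the diagonal blocks, which only contributes a negligible $O(m^{-1}\tilde{C}_n^{-2})$. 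Inserting two intermediate matrices and using $(a+b+c)^2 \le 3(a^2+b^2+c^2)$,
\[
\|\text{Ed}_{\tilde{\tilde{z}}}(\tilde{\tilde{\Lambda}}) - \text{Ed}_z(\Lambda)\|_F^2 \;\lesssim\; \big\|\Pi_{\tilde{\tilde{z}}}\bar{A}^{(\tilde{\tilde{\tau}}_n)} - \Pi_{\tilde{\tilde{z}}}\bar{A}^{(\tau_n)}\big\|_F^2 + \big\|\Pi_{\tilde{\tilde{z}}}\big(\bar{A}^{(\tau_n)} - \text{Ed}_z(\Lambda)\big)\big\|_F^2 + \big\|\Pi_{\tilde{\tilde{z}}}\text{Ed}_z(\Lambda) - \text{Ed}_z(\Lambda)\big\|_F^2 ,
\]
the three pieces being the change-point error, the oracle-type noise (with estimated blocks), and the misclassification bias.

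For the first piece, for each $(u,v)$ the map $b \mapsto \tilde{\lambda}_{uv,\tilde{\tilde{z}},(b,n)}$ is a running average of numbers in $[0,1]$, so $|\tilde{\lambda}_{uv,\tilde{\tilde{z}},(\tilde{\tilde{\tau}}_n,n)} - \tilde{\lambda}_{uv,\tilde{\tilde{z}},(\tau_n,n)}| \le \frac{2}{c^{*}}|\tilde{\tilde{\tau}}_n - \tau_n|$ uniformly; since $\Pi_{\tilde{\tilde{z}}}$ only replicates entries, $\frac{1}{m^2}\|\Pi_{\tilde{\tilde{z}}}\bar{A}^{(\tilde{\tilde{\tau}}_n)} - \Pi_{\tilde{\tilde{z}}}\bar{A}^{(\tau_n)}\|_F^2 = O_{\text{P}}((\tilde{\tilde{\tau}}_n-\tau_n)^2)$, which by Theorem \ref{lem: b1} equals the second term $\frac{I(n>1)}{n^2\|\text{Ed}_z(\Lambda)-\text{Ed}_w(\Delta)\|_F^4}$. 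For the second piece, $\Pi_{\tilde{\tilde{z}}}(\bar{A}^{(\tau_n)} - \text{Ed}_z(\Lambda))$ is the centred time-averaged noise $\frac{1}{n\tau_n}\sum_{t \le n\tau_n}(A_{t,n} - \text{Ed}_z(\Lambda))$ block-averaged over $\tilde{\tilde{z}}$; for a \emph{fixed} partition this is exactly the term $T_2$ in the discussion following Theorem \ref{thm: b2} --- a Bernstein/Hoeffding bound on each of the $O(K^2)$ block averages (each a sum of $\Theta(n\tau_n\hat{s}_u\hat{s}_v)$ independent bounded mean-zero summands) together with a union bound --- giving $O_{\text{P}}(\frac{\log m}{n\tilde{\mathcal{S}}_n^2})$, which under $\tilde{\mathcal{S}}_n \ge \tilde{C}_n$ is the third term $\frac{\log m}{n\tilde{C}_n^2}$.

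The third piece carries the remaining term $(Km/(n\nu_m^2))^2$ and, together with the data-dependence of $\tilde{\tilde{z}}$, is the crux. A purely combinatorial estimate --- declaring a misclassified node's whole row in error and counting $\le 2m|\mathcal{B}|$ offending pairs, where $\mathcal{B} = \{i: \tilde{\tilde{z}}(i) \ne \pi(z(i))\}$ with $\pi$ optimal --- only yields a term of first order in $\mathcal{M}_{(z,\tilde{\tilde{z}})}$, which need not be dominated by $\mathcal{M}_{(z,\tilde{\tilde{z}})}^2$. One instead exploits the mechanism behind Theorem \ref{thm: mismiscluster}: the row of $\Pi_{\tilde{\tilde{z}}}\text{Ed}_z(\Lambda) - \text{Ed}_z(\Lambda)$ at $i \in \mathcal{B}$ equals (to leading order) the difference of the community-profile rows of $\Lambda$ assigned to $i$ by $\tilde{\tilde{z}}$ and by $z$, and spectral clustering misallocates $i$ only when the perturbation of its embedded coordinates exceeds, up to $\nu_m$, the separation of these two centres; squaring and summing over $\mathcal{B}$ therefore bounds this piece by the squared eigenvector perturbation, of the same order $(Km/(n\nu_m^2))^2$ as $\mathcal{M}_{(z,\tilde{\tilde{z}})}^2$ from Theorem \ref{thm: cluster}. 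The data-dependence of $\tilde{\tilde{z}}$ in the second piece is handled on the event of Theorem \ref{thm: mismiscluster} on which $\mathcal{M}_{b,n}$ is uniformly $o(1)$: one replaces the $\tilde{\tilde{z}}$-block averages by the $z$-block averages at a cost already of the misclassification order, then applies the fixed-partition concentration to the \emph{true} partition $z$, against which the noise is genuinely independent; since $\mathcal{M}_{b,n} = o(1)$ forces $\tilde{\mathcal{S}}_n$ and $\mathcal{S}_n$ to agree in order, the ensuing $\frac{\log m}{n\mathcal{S}_n^2}$ is absorbed into $\frac{\log m}{n\tilde{C}_n^2}$. Combining the three bounds gives the stated rate for $\text{Ed}_{\tilde{\tilde{z}}}(\tilde{\tilde{\Lambda}})$; the bound for $\text{Ed}_{\tilde{\tilde{w}}}(\tilde{\tilde{\Delta}})$ follows verbatim with $(\tilde{\tilde{w}},\Delta,w)$ replacing $(\tilde{\tilde{z}},\Lambda,z)$ and the sum running over $t > n\tilde{\tilde{\tau}}_n$.

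The hardest step is this third piece, specifically (a) decoupling the spectral-clustering output $\tilde{\tilde{z}}$ from the adjacency matrices that also enter $\tilde{\tilde{\Lambda}}$, and (b) extracting the \emph{squared} misclassification rate rather than a crude first-order bound --- which forces one to re-enter the eigenvector-perturbation argument underlying Theorems \ref{thm: mismiscluster}--\ref{thm: cluster} instead of treating the clustering as a black box. The remaining steps are routine: the running-average Lipschitz bound of the first piece, the Bernstein/union-bound argument of the second (already carried out for $T_2$ after Theorem \ref{thm: b2}), and the $T_1$-type reduction to the oracle change point via Theorem \ref{lem: b1}.
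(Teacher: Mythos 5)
Your decomposition is the same as the paper's. Writing $E^{*}(\cdot)=E(\cdot\mid\tilde{\tilde{z}},\tilde{\tilde{w}})$, the proof in Section \ref{subsec: c3} splits $\|\text{Ed}_{\tilde{\tilde{z}}}(\tilde{\tilde{\Lambda}})-\text{Ed}_z(\Lambda)\|_F^2$ into $\mathbb{T}_1+\mathbb{T}_2+\mathbb{T}_3$: the change-point error (bounded by $Cm^2(\tilde{\tilde{\tau}}_n-\tau_n)^2$ and then Theorem \ref{lem: b1}), the fluctuation of the block averages about their conditional means (a sub-Gaussian tail bound plus a union bound over the $m^2$ entries, yielding $\log m/(n\tilde{C}_n^2)$), and the bias $\sum_{i,j}(E^{*}\tilde{\lambda}_{\tilde{\tilde{z}}(i)\tilde{\tilde{z}}(j),\tilde{\tilde{z}},(\tau_n,n)}-\lambda_{z(i)z(j)})^2$. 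Your first two pieces coincide with $\mathbb{T}_1$ and $\mathbb{T}_2$; the only difference there is that the paper handles the data-dependence of $\tilde{\tilde{z}}$ simply by conditioning on the clustering output and applying the concentration bound under $P^{*}$, rather than by transferring to the true partition as you propose.

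The divergence is on the bias term, and you have correctly located the crux. The paper disposes of $\mathbb{T}_3$ in a single display, asserting $m^{-2}\mathbb{T}_3\le C\mathcal{M}_{\tilde{\tilde{\tau}}_n,n}^2$ directly from the identity $E^{*}\tilde{\lambda}_{uv}-\lambda_{z(i)z(j)}=(s_{u,\tilde{\tilde{z}}}s_{v,\tilde{\tilde{z}}})^{-1}\sum_{a,b}S((u,v,\tilde{\tilde{z}}),(a,b,z))(\lambda_{ab}-\lambda_{z(i)z(j)})$, with no further argument. Your objection is fair: for pairs in which both nodes are correctly classified the per-entry bias is of order $\mathcal{M}$ and summing squares over $m^2$ such pairs does give $O(\mathcal{M}^2)$, but the pairs containing a misclassified node contribute, under the crude counting bound, a term of order $|\mathcal{B}|/m$ with $|\mathcal{B}|$ the number of misclassified nodes, which is first order in the misclassified fraction and not in general dominated by $\mathcal{M}^2$. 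Your proposed repair --- re-entering the eigenvector-perturbation argument behind Theorems \ref{thm: mismiscluster} and \ref{thm: cluster} and charging each misclassified node its squared embedded-coordinate perturbation --- is a genuinely different route and the natural way to try to recover the squared rate, but as written it is only a sketch: converting the $\Lambda$-row discrepancy at a misclassified node into its eigenvector perturbation requires an upper bound of the form $\|(\text{Ed}_z(\Lambda))_{i\cdot}-(\text{Ed}_z(\Lambda))_{i'\cdot}\|\le\sigma_{\max}\|U_{i\cdot}-U_{i'\cdot}\|$, where $\sigma_{\max}$ is the largest singular value of $\text{Ed}_z(\Lambda)$ and $U$ its matrix of leading eigenvectors, so the resulting bound carries $\sigma_{\max}$ rather than $\nu_m$ and does not obviously collapse to $(Km/(n\nu_m^2))^2$ without extra assumptions. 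In short: your skeleton matches the paper's for two of the three terms; on the third you have flagged a step the paper states without proof, but your replacement argument is not yet complete.
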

The proof of the Theorem is given in Section \ref{subsec: c3}.

\noindent
\begin{remark} \label{rem: paraestimation}
Note that the first term in the convergence rate of $\text{Ed}_{\tilde{\tilde{z}}}(\tilde{\tilde{\Lambda}})$, which is the square of the misclassification rate obtained in Theorem \ref{thm: cluster}, measures the closeness of $\text{Ed}_{\tilde{\tilde{z}}}(\tilde{\tilde{\Lambda}})$ to $\text{Ed}_{{z}}(\tilde{\tilde{\Lambda}})$. On the other hand, the second term is the convergence rate of $\text{Ed}_{{z}}(\tilde{\tilde{\Lambda}})$ for $\text{Ed}_{{z}}(\Lambda)$ and coincides with the convergence rate of the edge probability matrix estimator when the communities are known---see Theorem \ref{thm: b2} for details.  

As expected, the convergence rate of $\tilde{\tilde{\Lambda}}$ and $\tilde{\tilde{\Delta}}$, given in Theorem \ref{thm: c3}, is slower than the rate of $\tilde{\Lambda}$ and $\tilde{\Delta}$ when the communities are known. The reason is that the former estimates involve the misclassification rate of estimating $z$ and $w$ by $\tilde{\tilde{z}}$ and $\tilde{\tilde{w}}$, respectively. 
\end{remark}

\begin{remark} \label{rem: parameter}
\textbf{(Rate for $n=1$)}. For $n=1$, we go back to the setup of Remark \ref{rem: parameteroracle}. Suppose $z$ is unknown. We estimate $z$ and $\Lambda$ respectively by $\tilde{\tilde{z}}$ and $\tilde{\tilde{\Lambda}} = (( \tilde{\lambda}_{ab,\tilde{\tilde{z}},(1/n, n),m}))_{K \times K}$. Further, for some positive sequence $\{\tilde{C}_m\}$, suppose we have that $\tilde{\mathcal{S}}_m \geq \tilde{C}_m\ \forall m$ with probability $1$. Then 
\begin{eqnarray} 
\frac{1}{m^2} ||\text{Ed}_{\tilde{\tilde{z}}}(\tilde{\tilde{\Lambda}}) - \text{Ed}_{z}(\Lambda)||_F^2  &=& O_{\text{P}}\left( \left(\frac{Km}{\nu_{m,n}^2}\right)^2 +   \frac{\log m}{\tilde{C}_m^2}\rho_{m,n} \right). \nonumber 
\end{eqnarray}
The above rate of convergence is slower than the rate obtained in Remark \ref{rem: parameteroracle} where communities are known. This rate of convergence varies with different clustering methods employed for estimating $z$.  \cite{ZLZ2015} used a clustering algorithm for dense SBM (that is $\inf_{m,n} \rho_{m,n} >C>0$) so that the square of misclassification rate is $\sqrt{\frac{\log m}{m}}$  and $\tilde{C}_m^2 = \sqrt{m\log m}$. A detailed discussion on the impact of the clustering algorithm is provided in Section \ref{sec:discuss}. 
\end{remark}

\subsection{On the condition (A1)} \label{sec:A1}

As seen from the results in Section \ref{sec:theory-clustering}, condition (A1) plays a critical role. Next, we discuss examples where it holds---Examples \ref{example: misclassnew} and \ref{example: misclass1new}---and where it fails to do so---Example \ref{example: misclassnew2}.

\begin{example} \label{example: misclassnew}
Suppose we have $K$ balanced communities of size $m/K$.  Let $\Lambda = (p_1 - q_1)I_K + q_1J_K$ and $\Delta  = (p_2 - q_2)I_K + q_2J_K$, where $p_1,p_2,q_1,q_2$ may depend on both $m$ and $n$,  $I_K$ is the identity matrix of order $K$ and $J_K$ is the $K \times K$ matrix whose entries are all equal to $1$. Also assume $|p_1 - q_1|, |p_2-q_2| > \epsilon$ and $0<C<p_1,q_1,p_2,q_2<1-C<1$ for some $C, \epsilon >0$, which implies dense regime. Then, $\inf_{m,n}\rho_{m,n}>C$ for some $C>0$ and  the smallest non-zero singular value of $\text{Ed}_{z}(\Lambda)$ and $\text{Ed}_{w}(\Delta)$ are  $\frac{m}{K}|p_1-q_1|$ and $\frac{m}{K}|p_2-q_2|$, respectively. Therefore, $\nu_{m,n} = O(\frac{m}{K})$ and (A1)  reduces to 
\vskip 2pt
\begin{eqnarray} \label{eqn: A1reduce1}
\frac{K^3}{m} \to 0\ \ \text{and}\ \ \ \frac{K^2 \sqrt{n}}{m} = O(1).
\end{eqnarray} 
If $K$ is finite, then we need $n = O(m^2)$, which is a rather stringent requirement for most real applications.  

If $K = \sqrt{m}$, the condition does not hold as $m,n \to \infty$. If $K = Cm^{0.5 - \delta}$ for some $C, \delta>0$, then (\ref{eqn: A1reduce1}) holds if $m^{0.5-3\delta} \to 0$ and $n = O(m^{4\delta})$. In summary,  if $K = Cm^{0.5 - \delta}$,  $n = O(m^{4\delta})$ for some $C>0$ and $\delta> 1/6$, then (A1) holds. 

Next, define
\begin{eqnarray}
m_{\max,z}, m_{\max,w} &=& \text{largest community size of $z$ and $w$ respectively} \nonumber \\
m_{\min,z}, m_{\min,w} &=& \text{smallest community size of $z$ and $w$ respectively}. \nonumber 
\end{eqnarray}
The above conclusion also holds if we have  $\lim \frac{m_{\max,z}}{m_{\min,z}} = \lim \frac{m_{\max,w}}{m_{\min,w}}= 1$ instead of having balanced communities.

In the sparse regime, the assumption $|p_1 - q_1|, |p_2-q_2| > \epsilon$ and $0<C<p_1,q_1,p_2,q_2<1-C<1$ for some $C, \epsilon >0$ do not hold. In this case, (\ref{eqn: A1reduce1}) needs to hold after multiplying by $\sup_{m,n} \{(\min\{|p_1-q_1|,|p_2-q_2|\})^{-2} \max\{p_1,q_1,p_2,q_2\}\}$. 
\end{example}

\begin{example} \label{example: misclassnew2}
Consider the same model as in Example \ref{example: misclassnew} with $|p_1-q_1| = |p_2-q_2| = n^{-\delta}$ for some $\delta>0$. Suppose $K$ is finite. Then, (A1)  reduces to 
\vskip 2pt
\begin{eqnarray} \label{eqn: A1reduce}
\frac{n^{2\delta}}{m} \to 0\ \ \text{and}\ \ \ \frac{{n^{1/2+2\delta}}}{m} = O(1).
\end{eqnarray} 
In this case, (A1) does not hold if $m = C\sqrt{n}$ for some $C>0$.

We can also take $|p_1-q_1| = |p_2-q_2| = m^{-\delta}$ (instead of $n^{-\delta}$) for some $\delta>0$. Then also (A1) does not hold for $m=n$ and $\delta>1/4$.
\end{example}

\begin{example} \label{example: misclass1new}
Let $\Lambda = p_1 I_K$ and $\Delta = p_2 I_K$, where $p_1$ and $p_2$ may  depend on both $m$ and $n$. Assume that there is $\epsilon>0$ such that $p_1, p_2 > \epsilon$, that is dense regime. Then, the smallest non-zero singular values of $\text{Ed}_{z}(\Lambda)$ and $\text{Ed}_{w}(\Delta)$ are  $m_{\min,z}p_1$ and $m_{\min,w}p_2$ respectively. Let  $m_{\min} = \min \{m_{\min,z},m_{\min,w}\}$.  Therefore,  ${\nu}_{m,n} = O(m_{\min})$. Let $\tilde{\rho}_{m} = \frac{m_{\min}}{m}$.  Thus, (A1)  reduces to 
\begin{eqnarray}
\frac{K}{m \tilde{\rho}_m^2} \to 0\ \ \text{and}\ \ \ \frac{\sqrt{n}}{m\tilde{\rho}_m^2} = O(1). \label{eqn: 2} 
\end{eqnarray}
Suppose $K =Cm^{\lambda}$ and  $m_{\min} = Cm^{\delta}$ for some $\lambda, \delta \in [0,1]$. Then, $\tilde{\rho} _m = m^{\delta-1}$ and  (\ref{eqn: 2}) reduces to
\begin{eqnarray}
\frac{1}{m^{2\delta -\lambda-1}} \to 0 \ \ \text{and}\ \ \ \frac{\sqrt{n}}{m^{2\delta-1}} = O(1). \nonumber
\end{eqnarray}
Thus, (A1) holds if $K =Cm^{\lambda}$,  $m_{\min} = Cm^{\delta}$, $n = m^{4\delta-2}$ for some $\lambda, \delta \in [0,1]$ and $2\delta - \lambda-1 >0$. 

In the sparse regime, the assumption $p_1,p_2>\epsilon>0$ does not hold. In this case, (\ref{eqn: 2}) needs to hold after multiplying by $\sup_{m,n}\{(\min\{p_1,p_2\})^{-2}\max\{p_1,p_2\}\}$. 
\end{example}

\begin{remark}
Examples 2.1 and 2.3 provide sufficient conditions for Assumption (A1) in specific cases.  These conditions require a shorter time horizon (that means smaller value of $n$) in comparison with network sizes $m$ that are of interest in many real applications. Hence, these sufficient conditions are easily applicable to many real-world networks. Nevertheless, $\hat{\tau}_{m,n}$ (see Section \ref{sec: 2step}) proves useful in many practical settings for estimating the change-point
and avoids the above trade-off between $m$ and $n$.  
\end{remark}

\begin{remark} \label{rem: nc}
\textbf{A variant of the every point clustering algorithm with a weaker assumption on the misclassification rate:}
Note that computation of $\tilde{\tilde{\tau}}_{m,n}$ involves estimation of communities at \textit{every time point}. The necessity of clustering at every time-point leads us to consider
condition (A1). One may note though that since for theoretical considerations the change-point needs to be contained in the interval $(c^*,1-c^*)$, the following alternative approach can be employed: use Clustering Algorithm I for the first $[nc^{*}]$ and the last $[nc^{*}]$ time points for estimating $z$ and $w$, respectively. Denote the corresponding estimators by $z^{*}$ and $w^{*}$. Then, the corresponding change-point estimator $\tau^*_{m,n}$ can be obtained by
\begin{eqnarray}
\tau_{m,n}^* &=& \arg \min_{b \in (c^{*},1-c^{*})} \tilde{L} (b,{z}^{*},{w}^{*},\tilde{\Lambda}_{{z}^{*},(b,n),m},\tilde{\Delta}_{{w}^{*},(b,n),m}). \nonumber   
\end{eqnarray}
Since we are using order $n$-many time points in the clustering step and also for estimating the true change-point $\tau_{m,n} \in (c^*,1-c^*)$, the misclassification rates for $z^*$ and $w^*$ are similar to those of $\tilde{\tilde{z}}$ and $\tilde{\tilde{w}}$ obtained in Theorem \ref{thm: cluster}. As pointed out in the discussion preceding the statement of assumption (A1), clustering at every time point requires the misclassification rate $\mathcal{M}_{b,n,m}$ to decay faster than $n^{-1}\rho_{m,n}^{-1}||\text{Ed}_{z}(\Lambda) -\text{Ed}_{w}(\Delta)||_F$. However, when computing $\tau^*_{m,n}$, we use the same estimates $z^*$ and $w^*$ for \textit{all time points}. As will be seen later in Remark \ref{rem: proof}, if we use the same clustering solution (assignment of nodes to communities) through all the time points, we only require the misclassification rate to decay faster than $\rho_{m,n}^{-1}||\text{Ed}_{z}(\Lambda) -\text{Ed}_{w}(\Delta)||_F$ (instead of $n^{-1}\rho_{m,n}^{-1}||\text{Ed}_{z}(\Lambda) -\text{Ed}_{w}(\Delta)||_F$) for consistency of the change-point estimator. As a consequence, a weaker assumption on the misclassification rate
\vskip 2pt
\noindent \textbf{(A1*)} $\frac{m}{\sqrt{n}\nu_{m,n}^2}\rho_{m,n} = O(1)$ \vskip 2pt
\noindent together with the SNR-DSBM condition are needed to establish the consistency of $\tau^*_{m,n}$.  The upshot is that if node assignments $z^*$ and $w^*$ are employed, assumption (A1) becomes weaker.  

To further illustrate the latter point, note that in Example \ref{example: misclassnew},  (A1*) reduces to $K^2 = O(m\sqrt{n})$. Therefore, if $K = Cm^{\delta}$ and 
$n =Cm^{\lambda}$ for some $\delta \in (0,1)$ and $\lambda>0$, then (A1*) reduces to $1-2\delta+\lambda/2 \geq 0$.  For Example \ref{example: misclassnew2}, (A1*) boils down to  $n^{2\delta} = O(m)$.  Finally, in Example \ref{example: misclass1new},  (A1*) holds  if   $m_{\min} = Cm^{\delta}$, $n = m^{\lambda}$ for some $\lambda >0$,  $\delta \in [0,1]$ and $2\delta + \lambda/2 -1 \geq 0$.  
\end{remark}

\noindent Though the method in Remark \ref{rem: nc} needs a weaker assumption on the misclassification rate compared to (A1), this paper focuses on the ``every time-point clustering algorithm" due to the following two considerations. 

\begin{remark} \label{rem: limitation-nc} \textbf{Considerations for $\tau^*_{m,n}$:} Note that in practice the strategy in Remark \ref{rem: nc} requires that $c^*$ be known, which may not be the case in most applications. If $c^*$ is not known, a reasonable practical alternative is to use only the first and last time points to estimate $z$ and $w$, respectively. Further, $\frac{m\rho_{m,n}}{\nu_{m,n}^2} = O(1)$ is required for consistency of the change-point estimator. This is stronger than (A1*) but weaker than (A1).  One can argue that, in principle, the value of $c^*$ is needed to compute the change-point, since for establishing the theoretical results the search to identify it is restricted in the interval $(c^*,1-c^*)$. However, in practice, one always searches throughout the entire interval, and hence the practical alternative of using the first and last time points to estimate $z$ and $w$ is compatible with it. 

Finally, note that this alternative, that is known stretches of points that belong to only a single regime, is viable for estimating a single change-point, but no longer so when multiple change-points are involved. In the latter case, one would still assume that the first and last change-points are separated away from the boundary by some fixed amount, but no such restrictions on the locations of the intermediate change-points can be imposed, and hence a full search strategy (see for example the algorithm proposed in \cite{auger1989algorithms}) combined with clustering is unavoidable. This is the reason that our analysis focuses on the ``every time-point clustering algorithm"  since it provides insights on where challenges will arise for the case of multiple change-points,  appropriate treatment of which is nevertheless beyond the scope of this paper. 
\end{remark}

\begin{remark} \label{rem: otherclustering}
In this section, we used a specific clustering procedure proposed in \cite{B2017} to identify the communities and to locate the change-point. Nevertheless, other clustering algorithms proposed in the literature  [\cite{P2017, RCY2011}] could be employed. For any given clustering algorithms the following statements hold. 
\vskip 5pt
\noindent (a) The conclusions of Theorems  \ref{lem: b1},  \ref{thm: cluster} and \ref{thm: c3}  hold  once we replace (NS) and (A1)  by 
\vskip 2pt
\noindent \textbf{(A9)} $n^2\mathcal{M}_{b,n,m}^2 \rho_{m,n}^2 ||\text{Ed}_{z}(\Lambda) -\text{Ed}_{w}(\Delta)||_F^{-2}  \to 0, \ \forall \  b\in (c^*,1-c^*)$,
\vskip 2pt
\noindent where $\mathcal{M}_{b,n,m}$ is the maximum misclassification error that $\tilde{z}_{b,n,m}$ and $\tilde{w}_{b,n,m}$ in estimating $z$ and $w$, respectively, given in (\ref{eqn: mdefine}).
\vskip 5pt
\noindent (b) Suppose  (A9) and SNR-DSBM hold and in addition $\mathcal{M}_{\tilde{\tilde{\tau}}_{m,n},n,m}^2 = O_{\text{P}}(E_{m,n})$ for some sequence 
$E_{m,n} \to 0$. Moreover, assume that for some positive sequence $\{\tilde{C}_{m,n}\}$, $\hat{\mathcal{S}}_{m,n} \geq \tilde{C}_{m,n}\ \forall \ n$ with probability $1$ and $n\tilde{C}_{m,n}^{-2}\log m ||\text{Ed}_{z}(\Lambda) -\text{Ed}_{w}(\Delta)||_F^4 \geq I(n > 1)$. Then  
\begin{eqnarray} 
&& \frac{1}{m^2} ||\text{Ed}_{\tilde{\tilde{z}}}(\tilde{\tilde{\Lambda}}) - \text{Ed}_{z}(\Lambda)||_F^2,\ \  \frac{1}{m^2} ||\text{Ed}_{\tilde{\tilde{w}}}(\tilde{\tilde{\Delta}}) - \text{Ed}_{w}(\Delta)||_F^2  \nonumber \\
&& \hspace{3.5 cm}= O_{\text{P}}\left(  E_{m,n} + \frac{I(n>1)\rho_{m,n}^2}{n^2|\text{Ed}_{\tilde{\tilde{z}}}(\tilde{\tilde{\Lambda}}) - \text{Ed}_{z}(\Lambda)||_F^4} + \frac{\log m}{n \tilde{C}_{m,n}^2} \rho_{m,n}\right). \label{eqn: remremove}
\end{eqnarray}
\noindent The proofs of statements (a) and (b) follow immediately from those of   Theorems \ref{lem: b1}$-$\ref{thm: c3} and Remark \ref{rem: proof}.

The analogue of (A9) for existing clustering algorithms in the literature and their comparison with (A1) are discussed in Section \ref{sec:discuss} in more detail. 
\end{remark}

\section{A fast $2$-step procedure for change-point estimation in the DSBM} \label{sec: 2step}

The starting point of our exposition is the fact that the SBM is a special form of the Erd\H{o}s-R\'{e}nyi random graph model.  The latter is characterized by the following edge generating mechanism. Let $p_{ij}$ be the probability of having an edge between nodes $i$ and $j$ and let $P$ be the $m\times m$ corresponding connectivity probability matrix.  We denote this model by ER$(P)$. An adjacency matrix $A$ is said to be generated according to ER$(P)$, if $A_{ij} \sim \text{Bernoulli}(p_{ij})$ {\em independently} and we denote it by 
$A \sim \text{ER}(P)$. Clearly $A \sim \text{SBM}(z,\Lambda)$ implies $A \sim \text{ER}(\text{Ed}_{z}(\Lambda))$. 

The DSBM with single change-point in (\ref{eqn: dsbmmodel}) can be represented as a random graph model as follows: there is a sequence $\tau_{m,n} \in (0,1)$ such that for all $n \geq 1$,
\begin{eqnarray}
A_{t,n} \sim \begin{cases} \text{ER}(\text{Ed}_{z}(\Lambda)),\ \ \text{if $1 \leq t \leq \lfloor n\tau_{m,n} \rfloor$} \\
\text{ER}(\text{Ed}_{w}(\Delta)),\ \ \text{if $\lfloor n\tau_{m,n} \rfloor < t <n$} \label{eqn: dERmodel}
\end{cases}
\end{eqnarray}
and $\Lambda \neq \Delta$ and/or $z \neq w$. 

In general, without any structural assumptions,  a dynamic Erd\H{o}s-R\'{e}nyi model with a single change-point has $m(m+1)+1$ many unknown parameters,
the $0.5 m(m+1)$ pre- and post- change-point edge probabilities and $1$ change-point. An estimate of   $\tau_{m,n}$ can be obtained by optimizing the following least-squares criterion function.
\begin{eqnarray}
\hat{\tau}_{m,n} &=& {\arg \min}_{b \in (c^{*},1-c^{*})} L(b)\ \ \   \text{where} \nonumber \\
L(b) &=& \frac{1}{n}\sum_{i,j=1}^{m} \bigg[\sum_{t=1}^{nb} (A_{ij,(t,n)} - \hat{p}_{ij,(b,n),m})^2 + \sum_{t=nb+1}^{n} (A_{ij,(t,n)} - \hat{q}_{ij,(b,n),m})^2 \bigg], \nonumber \\
\hat{p}_{ij,(b,n),m} &=& \frac{1}{nb} \sum_{t=1}^{nb} A_{ij,(t,n)}\ \ \text{and}\ \ \hat{q}_{ij,(b,n),m} = \frac{1}{n(1-b)} \sum_{t=nb+1}^{n} A_{ij,(t,n)}. \label{eqn: estimatea1}
\end{eqnarray}

\noindent
Next, we present our 2-step algorithm. \\
\newline
\newline
\noindent \textbf{2-Step Algorithm}: 
\vskip 2pt
\noindent \textbf{Step 1:}  In this step, we ignore the community structures and assume $z(i) =  w(i) = i$ for all $1 \leq i \leq m$.  We compute  the least-squares criterion function $L(\cdot)$ given in (\ref{eqn: estimatea1}) and obtain the estimate ${\hat{\tau}}_{m,n} = \arg \min_{b \in (c^{*},1-c^{*})} L(b)$.
\vskip 2pt

\noindent \textbf{Step 2:} This step involves estimation of other parameters in DSBM. We estimate $z$ and $w$ by ${\hat{z}} = \tilde{z}_{\hat{\tau}_{m,n},n,m}$ and ${\hat{w}} = \tilde{w}_{\hat{\tau}_{m,n},n,m}$, respectively, and subsequently $\Lambda$ and $\Delta$ by $\hat{\hat{\Lambda}} = \tilde{\Lambda}_{{\hat{z}},(\hat{\tau}_{m,n},n),m}$ and $\hat{\hat{\Delta}} = \tilde{\Delta}_{{\hat{w}},(\hat{\tau}_{m,n},n),m}$, respectively.

\noindent
{\em Computational complexity of the 2-Step Algorithm} \\
It can easily be seen that Step $1$ requires $O(m^2n)$ operations, while Step 2 due to performing clustering requires $O(m^3)$ operations. Thus, the total computational complexity of the entire algorithm is $O(m^3 + m^2n) \sim O(m^2\max(m,n))$, which is significantly smaller than that of obtaining $\tilde{\tilde{\tau}}_{m,n}$  in (\ref{eqn: estimateintro}). 

\subsection{Theoretical Results for $\hat{\tau}_{m,n}$} \label{sec:theory-ER}

The following identifiability condition and the restriction on the sparse parameter are required.
\vskip 2pt
\noindent \textbf{SNR-ER:} $\frac{n}{m^2 \rho_{m,n}} ||\text{Ed}_{z}(\Lambda)-\text{Ed}_{w}(\Delta)||_F^2 \to \infty.$
\vskip 2pt
\noindent \textbf{Sparse-ER:} $\rho_{m,n} > Cm^{-2}$ for some $C>0$.
\vskip 2pt
SNR-ER requires that the signal per edge parameter and scaled by the sparsity parameter grows faster than $1/\sqrt{n}$. Clearly, SNR-ER is stronger than SNR-DSBM, as expected since the ER model involves $m^2$ parameters, as opposed to $K^2$ parameters for the DSBM. 

Similar to the discussed in Remark \ref{rem: sparseknowncomm1}, in the presence of SNR-ER, Sparse-ER ensures consistency of the change-point estimator $\hat{\tau}_{m,n}$ after scaling by the sparsity parameter.  Also, Sparse-ER is weaker than Sparse-DSBM.

The following Theorem provides asymptotic properties for the estimates of the DSBM parameters obtained from the 2-Step Algorithm.  Its proof is given in Section \ref{subsec: 2step}. 
\begin{theorem} \label{thm: 2step}
Suppose that SNR-ER and Sparse-ER hold. Then, the conclusion of Theorem \ref{lem: b1} holds for $\hat{\tau}_{m,n}$. Similarly, under SNR-ER, Sparse-ER and (NS),  the  conclusions of Theorems \ref{thm: cluster} and \ref{thm: c3} continue to hold for  $\hat{z}$, $\hat{w}$, $\hat{\hat{\Lambda}}$ and $\hat{\hat{\Delta}}$.
\end{theorem}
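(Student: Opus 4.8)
The plan is to reduce all three assertions to results already established for the DSBM, so that no genuinely new estimates are required. The central observation is that the Step~1 estimator $\hat\tau_n$ of (\ref{eqn: estimatea1}) is \emph{identical} to the known-community-structure estimator $\tilde\tau_n$ of (\ref{eqn: estimateintro1}) computed on the same data, once we view the dynamic model (\ref{eqn: dsbmmodel}) through the equivalent reparametrization in which the (known, true) community structure is the trivial one $z(i)=w(i)=i$, $1\le i\le m$, and the true connection matrices are $P:=\text{Ed}_z(\Lambda)$ and $Q:=\text{Ed}_w(\Delta)$, regarded as $m\times m$ matrices. Since $\text{Ed}_z(\Lambda)$ and $\text{Ed}_w(\Delta)$ have zero diagonals, this produces exactly the same data law as (\ref{eqn: dsbmmodel}); under it the block averages of (\ref{eqn: estimateb}) collapse to $\tilde\lambda_{ij,z,(b,n)}=\hat p_{ij,(b,n)}$ and $\tilde\delta_{ij,w,(b,n)}=\hat q_{ij,(b,n)}$, the criterion $\tilde L$ of (\ref{eqn: estimateccc}) collapses to $L(\cdot)$ of (\ref{eqn: estimatea1}), the number of communities equals $m$, $\text{Ed}_z(P)=P$ and $\text{Ed}_w(Q)=Q$, and the identifiability condition SNR-DSBM becomes precisely $\frac{n}{m^2}\|P-Q\|_F^2\to\infty$, i.e.\ SNR-ER. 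Theorem~\ref{thm: dsbmknown} therefore applies directly and yields
$$ n\,\|\text{Ed}_z(\Lambda)-\text{Ed}_w(\Delta)\|_F^2\,(\hat\tau_n-\tau_n)=O_{\text{P}}(1), $$
which is the conclusion of Theorem~\ref{lem: b1}. (One could instead reprove this from scratch along the lines of the proof of Theorem~\ref{lem: b1}; it is in fact simpler, since no clustering is performed in Step~1 and hence there is no misclassification contribution to control, and the only substantive ingredient is the standard change-point localization argument bounding the oscillation of the contrast $L(b)-L(\tau_n)$ near $b=\tau_n$ — here SNR-ER is exactly what makes the accumulated $m^2$-dimensional Bernoulli noise negligible against the signal $\|P-Q\|_F^2$.)

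Granted this rate for $\hat\tau_n$, the statements about $\hat z,\hat w,\hat{\hat\Lambda},\hat{\hat\Delta}$ require no new work. These Step~2 quantities are built from $\hat\tau_n$ by $\hat z=\tilde z_{\hat\tau_n,n}$, $\hat w=\tilde w_{\hat\tau_n,n}$, $\hat{\hat\Lambda}=\tilde\Lambda_{\hat z,(\hat\tau_n,n)}$, $\hat{\hat\Delta}=\tilde\Delta_{\hat w,(\hat\tau_n,n)}$ in exactly the way $\tilde{\tilde z},\tilde{\tilde w},\tilde{\tilde\Lambda},\tilde{\tilde\Delta}$ are built from $\tilde{\tilde\tau}_n$ in Section~\ref{sec:theory-clustering}; since SNR-ER implies SNR-DSBM and $\hat\tau_n$ obeys the same convergence rate as $\tilde{\tilde\tau}_n$, the proofs of Theorems~\ref{thm: cluster} and \ref{thm: c3} transfer verbatim with $\tilde{\tilde\tau}_n$ replaced by $\hat\tau_n$. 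Concretely, substituting $b=\hat\tau_n$ in Theorem~\ref{thm: mismiscluster} (which needs only (NS)) and using $|\tau_n-\hat\tau_n|\,\|\text{Ed}_z(\Lambda)-\text{Ed}_w(\Delta)\|_F^2=O_{\text{P}}(1/n)$, the $|\tau-b|$-term is dominated by the $\tau m$-term and one obtains $\mathcal{M}_{(z,\hat z)},\mathcal{M}_{(w,\hat w)}=O_{\text{P}}(Km/(n\nu_m^2))$, the conclusion of Theorem~\ref{thm: cluster}. For the edge probabilities one reruns the three-part decomposition from the proof of Theorem~\ref{thm: c3}: a misclassification term bounded by the square of the rate just obtained; a change-point term bounded by $(\hat\tau_n-\tau_n)^2=O_{\text{P}}(I(n>1)/(n^2\|\text{Ed}_z(\Lambda)-\text{Ed}_w(\Delta)\|_F^4))$; and an oracle term of order $\log m/(n\tilde C_n^2)$ coming from Bernstein-type concentration of the block averages over the roughly $n\tau_n$ available adjacency matrices together with the postulated lower bound $\tilde{\mathcal{S}}_n\ge\tilde C_n$. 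Summing these reproduces the rates of Theorem~\ref{thm: c3} for $\hat{\hat\Lambda}$ and $\hat{\hat\Delta}$.

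Since the whole argument is a chain of exact reductions, there is no real obstacle, only two points deserving care: verifying that the reparametrization is genuinely an \emph{equality} of estimators (so that Theorem~\ref{thm: dsbmknown}, and not merely an analogue of it, can be invoked for $\hat\tau_n$), and inserting the random argument $\hat\tau_n$ into the fixed-$b$ bound of Theorem~\ref{thm: mismiscluster}, which relies on that bound holding uniformly over $b\in(c^*,1-c^*)$ — the same mild uniformity already used when passing from Theorem~\ref{thm: mismiscluster} to Theorem~\ref{thm: cluster}. It is worth noting that, unlike for $\tilde{\tilde\tau}_n$, no analogue of (A1) enters here, because clustering is performed at the single index $\hat\tau_n$ rather than at every time point; the cost of dispensing with (A1) is the strictly stronger identifiability requirement SNR-ER in place of SNR-DSBM.
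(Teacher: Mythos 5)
Your proof is correct and follows essentially the same route as the paper: the paper's own argument simply observes that the proof of Theorem \ref{lem: b1} goes through verbatim with $\mathcal{M}_{b,n}=0$, the trivial (known) node-level assignment and $K=m$, under which SNR-DSBM becomes SNR-ER, and then obtains the Step-2 conclusions by substituting $\hat{\tau}_n$ into Theorems \ref{thm: mismiscluster} and \ref{thm: c3}. Your explicit framing of Step 1 as an exact reparametrization onto Theorem \ref{thm: dsbmknown}, together with your remarks on uniformity in $b$ and on the tacit (NS) hypothesis for the clustering step, only makes explicit what the paper leaves implicit.
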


\begin{remark} \label{rem: sparseunknowncomm2}
As we can observe above, there is a trade-off between the signal-to-noise condition and the decay rate of the sparsity parameter. Though the convergence rate of $\tilde{\tau}_{m,n}$ and $\hat{\tau}_{m,n}$ are the same, the former one needs stronger Sparse-DSBM under weaker SNR-DSBM compared to weaker Sparse-ER under stronger SNR-ER required for the later estimator. 
\end{remark}

\begin{remark} \label{rem: sparsesnr}  
It is easy to see that  the average signal per edge $(m^2\rho_{m,n})^{-1}||\text{Ed}_{z}(\Lambda)-\text{Ed}_{w}(\Delta)||_F^2 \leq \rho_{m,n} \to 0$ for sparse graphs. On the other side, $\rho_{m,n}$ is bounded away from $0$ for dense graphs and  $(m^2\rho_{m,n})^{-1}||\text{Ed}_{z}(\Lambda)-\text{Ed}_{w}(\Delta)||_F^2 \geq C$ for some $C>0$ can be  often satisfied. Therefore dense regime generates more signal compared to the sparse regime and consequently the later one needs larger sample size (number of time points) to satisfy the signal-to-noise conditions SNR-DSBM and SNR-ER for detecting the change-point consistently and  has slower convergence rate of the change-point estimator. 
\end{remark}

\begin{remark} \label{rem: further}
One may wonder regarding dense settings (similar discussion is true for the sparse graphs as well) where SNR-DSBM holds, but neither (A1) nor SNR-ER do. Examples \ref{example: v10new1} and \ref{example: v10new2} introduce such settings in the context of changes in the connection probabilities and in the community structures, respectively. The methods discussed in Sections \ref{sec: DSBM} and \ref{sec: 2step} fail to detect the change-point under the above-presented settings. Therefore, alternative strategies need to be investigated. One possibility for the case of a  single change-point being present was discussed in Remark \ref{rem: nc} and more details are given in Example \ref{example: v10new3}. Another setting that does not require clustering is presented in Example \ref{example: v10new4} and builds on the model discussed in \cite{G2015rate}. However, the setting in Example \ref{example: v10new4} is very specific involving two parameters only.  Nevertheless, a generally applicable strategy is currently lacking for the regime where SNR-DSBM holds, but neither SNR-ER or (A1) does. This constitutes an interesting direction for future research.
\end{remark}

\section{Comparison of the ``Every time point clustering algorithm"  vs the 2-step algorithm} \label{sec: compare}

Our analysis up to this point has highlighted the following key findings. If the total signal is strong enough (that means if SNR-ER holds), then it is beneficial to use the $2$-step algorithm that provides consistent estimates of \textit{all} DSBM parameters at \textit{reduced computational cost}. On the other hand, if the signal is not adequately strong (that means if SNR-ER fails to hold, but SNR-DSBM holds) then the only option available is to use the computationally expensive ``every time point algorithm", provided that (A1) also holds. Our discussion in Section \ref{sec:A1} indicates that (A1) (and (A1*)) is not an innocuous condition and may fail to hold in real application settings.

For example, consider a DSBM with $m=60$ nodes, $K=2$ communities and $n=60$ time points. Suppose that there is a break at $n\tau_n = 30$, due to a change in community connection probabilities. Further, assume that the community connection probabilities before and after the change-point are given by $ \Lambda = \left(\begin{array}{cc}
0.6 & 0.3 \\ 
0.3 & 0.6
\end{array}  \right)$ and $\Delta = \Lambda + \frac{1}{n^{1/4}}J_2$ (or $= \Lambda + \frac{1}{m^{1/4}}J_2$), respectively. Finally, suppose that there is no change in community structures and $z(i) = w(i) = I(1 \leq i \leq m/2) + 2I(m/2 +1 \leq i \leq m)$.
In this case, one can check that $\inf_{m,n} \rho_{m,n} >C>0$,  $\frac{n}{m^2}||\text{Ed}_{z}(\Lambda)-\text{Ed}_{w}(\Delta)||_2^2 =7.75$, $\frac{Km}{\nu_{m,n}^2}= 1.48$, $\frac{m\sqrt{n}}{\nu_{m,n}^2} = 5.7$ and hence SNR-ER holds but (A1) fails. Figure \ref{fig: 2} plots the least-squares criterion function against time scaled by $1/n$, corresponding to the 2-step, known communities, and ``every time point" algorithms, respectively. The plots show that the trajectory of the least-squares criterion function is much smoother and the change-point is easily detectable when known community structures are assumed. It is also the case for the 2-step algorithm, albeit with more variability.  However, since (A1) fails to hold, the objective function depicted in Figure \ref{fig: 2} (bottom middle panel) clearly illustrates that the change-point is not detectable for the ``every time point" algorithm.

\begin{center}
\begin{figure}[htp] 
\includegraphics[height=70mm, width=74mm]{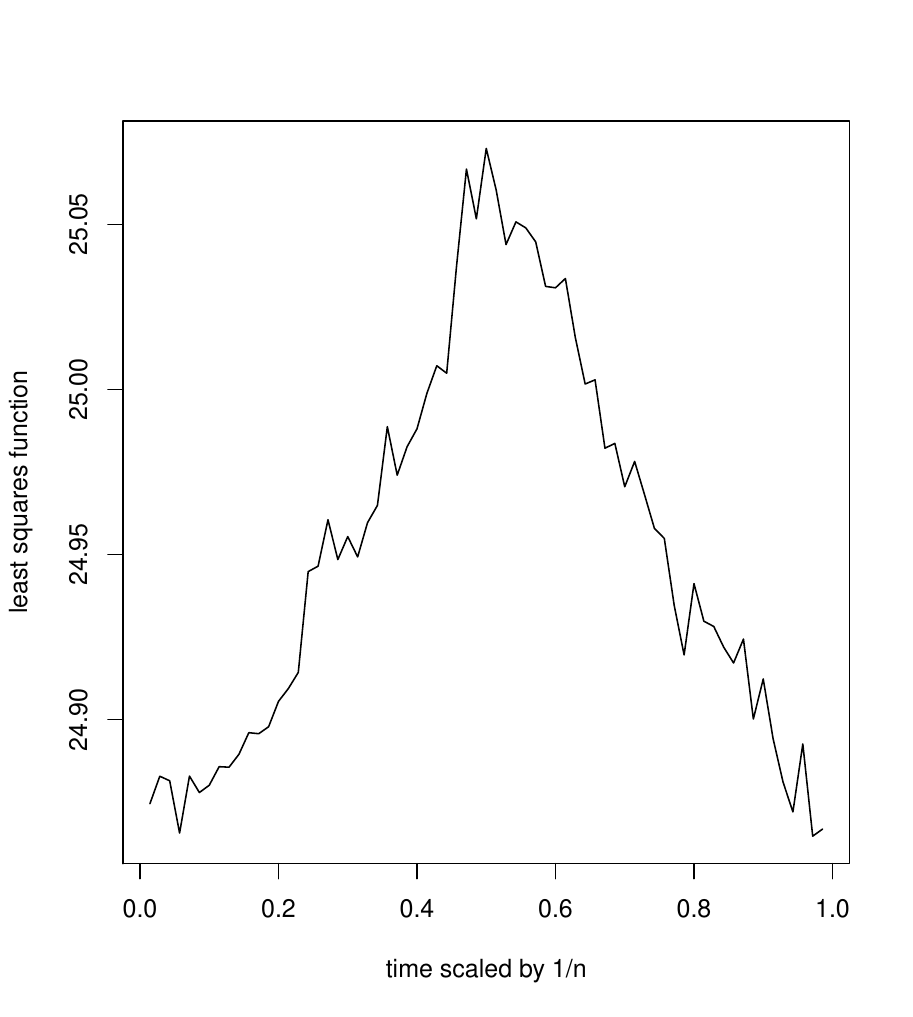} 
\includegraphics[height=70mm, width=74mm]{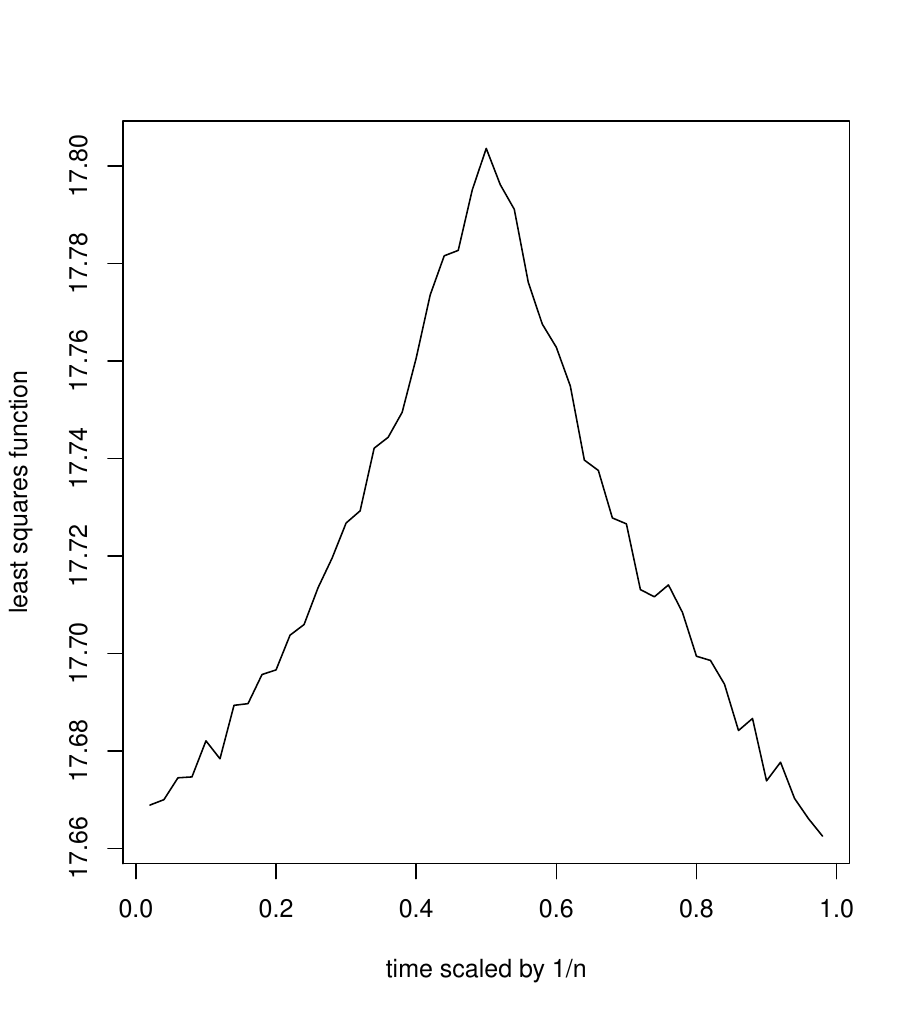}
\vskip 5pt
\center{\includegraphics[height=70mm, width=74mm]{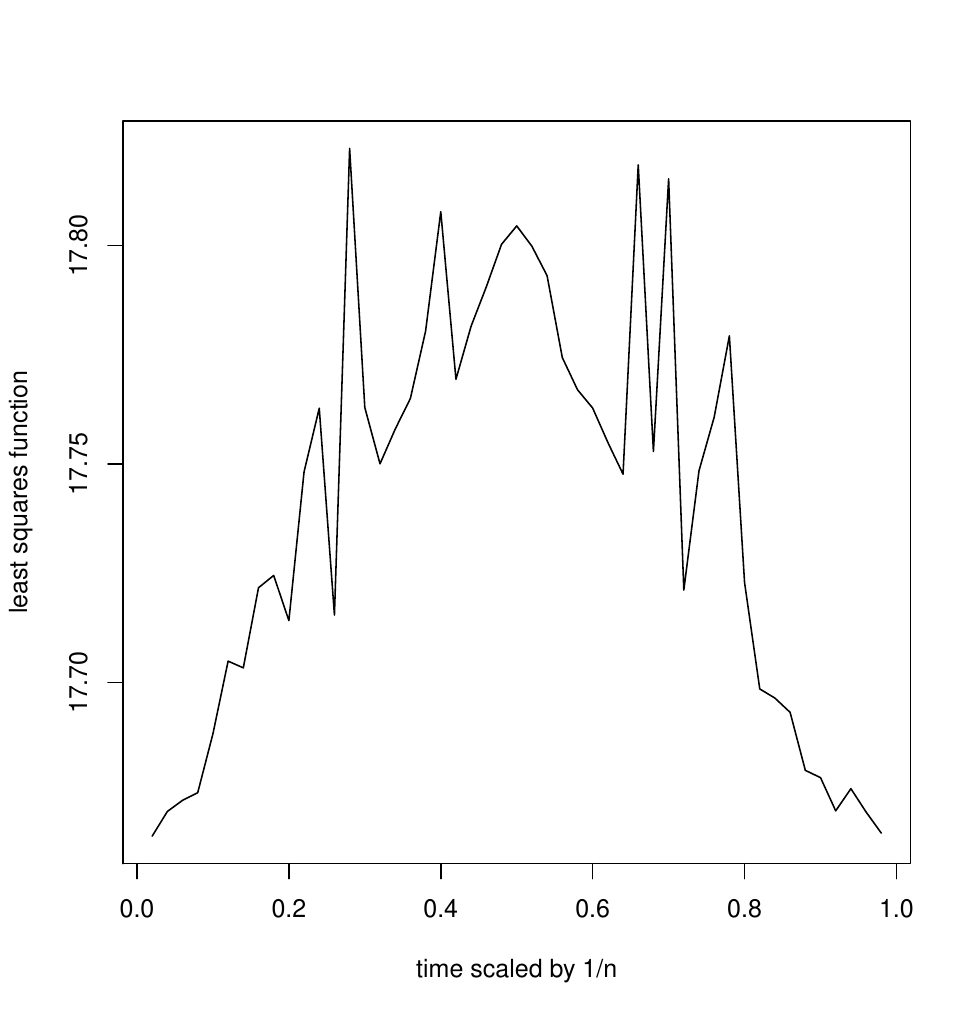}}
\caption{ \label{fig: 2} 
A plot of the least-squares criterion function against time scaled by $1/n$. Top left and right panels correspond to the 2-step and known communities algorithm, while the bottom middle depicts the ``every time point" algorithm, respectively.}
\end{figure} 
\end{center}

The next question to address is  {\em ``How stringent is SNR-ER"} under the DSBM model. As the following discussion shows, the reallocation of nodes to new communities generates strong enough signal, and therefore SNR-ER may be easier to satisfy in practice than one might suppose.

\bigskip
\noindent \textbf{Sufficient conditions for SNR-ER under the DSBM model}. \vskip 5pt
We examine a number of settings where SNR-ER holds under the DSBM network generating mechanisms and hence the 2-step algorithm can be employed.  Specifically, the following proposition provides sufficient conditions for SNR-ER to hold. Let $\mathbb{N}$  be  the set of all natural numbers. Define the classes of functions $\mathcal{F} := \{f\big| f: \mathbb{N} \times \mathbb{N} \to (0,1)\}$ and $\mathcal{G}_{m} := \{g=((m^2 f)\vee 1)\wedge 0.5m(m-1)  \big| f\in \mathcal{F}\}$ for all $m \geq 1$. 
For  any    $f \in \mathcal{F}$,  let 
$$\mathcal{A}(f) = \{(i,j):\ |\lambda_{z(i)z(j)} - \delta_{w(i)w(j)}| > f(m,n)\}.$$ Hence, $\mathcal{A}(f)$ corresponds to the set of all edges for which the connection probability changes at least by an $f(m,n)$ amount.

\begin{proposition}\label{example1}
 Suppose $|A(f)| \geq Cg(m,n)$ for some $f \in \mathcal{F}, g\in \mathcal{G}_m$ such that\\ $nm^{-2}g(m,n)(f(m,n))^2 \to \infty$. Then, SNR-ER holds. 
\end{proposition}

The above proposition follows from the fact that 
$$\frac{n}{m^2 \rho_{m,n}}||\text{Ed}_{z}(\Lambda) -\text{Ed}_{w}(\Delta)||_F^2  \geq nm^{-2}g(m,n)(f(m,n))^2 \to \infty.$$ 
This implies that at least $g(m,n)$-many edges need to change their connection probability by at least $f(m,n)$ amount for SNR-ER to be satisfied. This leads us to the following scenarios (A)-(D) that often arise in practice. 

The following example provides a choice for $f$ and $g$, respectively. 
\begin{example} \label{examplenewmn}
Let $A(\epsilon,\delta_1) = \{(i,j):\ |\lambda_{z(i)z(j)} - \delta_{w(i)w(j)}| > \epsilon m^{-\lambda_1/2}n^{-\delta_1 /2}\}$,  $|A(\epsilon,\delta_1)|\geq \max\{Cm^{2-\lambda_2}n^{-\delta_2},1\}$ and $m^{\lambda_1+\lambda_2} = o(n^{(1-\delta_1-\delta_2)})$ for some $C,\epsilon>0$ and $0\leq \delta_1+\delta_2<1, \lambda_1 \geq 0, 0 \leq \lambda_2 \leq 2$.   This implies that at least $m^{2-\lambda_2}n^{-\delta_2}$-many edges need to change their connection probability by at least $\epsilon m^{-\lambda_1/2}n^{-\delta_1 /2}$ amount.
Then Proposition \ref{example1}, by setting $f = \epsilon m^{-\lambda_1/2}n^{-\delta_1/2}$ and $g=\max\{Cm^{2-\lambda_2}n^{-\delta_2},1\}$, establishes that SNR-ER holds. As $\lambda_1, \lambda_2, \delta_1,\delta_2$ increase, the signal $||\text{Ed}_{z}(\Lambda) -\text{Ed}_{w}(\Delta)||_F^2$ due to the change decreases and therefore a large number of time points $n$ is required to accumulate adequate signal (that is to satisfy the SNR condition) for detecting the change-point. 
\end{example}

Next, we discuss settings motivated by real-world applications, wherein the SNR-ER condition holds for DSBM.

\vskip 2pt
\noindent (A) \textbf{Reallocation of nodes}: Suppose that the pre- and post-community connection probabilities are the same; that is $\Lambda = \Delta$. This also implies that the total number of communities before and after the change-point are equal.  Suppose that some of the nodes are reallocated to new communities after the change-point epoch. \\
\noindent
A motivating example for this scenario comes from voting patterns of legislative bodies as analyzed in \cite{bao2018core}. In this setting, one is interested in
identifying when voting patterns of legislators change significantly.  By considering the legislators as the nodes of the network, an edge between two of them
indicates voting similarly on a legislative measure (for examples bill, resolution), while the communities reflect their political affiliations, it can be seen that after an election the composition of the communities may be altered---reassignment of nodes.

In this situation, SNR-ER holds if the entries of $\Lambda$ (or $\Delta$) are adequately separated and enough nodes are reallocated.  Specifically, 
for some $\epsilon, C>0$ and $0 \leq \delta_1 + \delta_2 <1, \lambda_1 \geq 0, 0 \leq \lambda_2 \leq 2$, suppose we have $|\Lambda_{ij} - \Lambda_{i^\prime j^\prime}| > \epsilon m^{-\lambda_1/2}n^{-\delta_1 /2}\ \forall (i,j) \neq (i^\prime,j^\prime)$ and $\max\{Cm^{2-\lambda_2} n^{-\delta_2},1\}$-many nodes change their community after time $n\tau_n$. Then, by Proposition \ref{example1} and Example \ref{examplenewmn},  SNR-ER holds. 
\vskip 2pt

\noindent (B) \textbf{Change in connectivity}: Suppose that the community structures remain the same before and after the change-point (that implies $z = w$), but their community connection probabilities change (therefore $\Lambda \neq \Delta$). This scenario is motivated by the following examples: in transportation networks, when service is reduced or even halted between two service locations, in social media platforms (for example Facebook) when a new online game launches, or in collaboration networks when a large scale project is completed.

Then, SNR-ER holds if entries of $\Lambda$ are adequately separated from those of $\Delta$. Specifically, for some $\epsilon>0$ and $0 \leq \delta <1, \lambda\geq 0$, 
suppose we have $|\lambda_{ij} -\delta_{ij}| > \epsilon m^{-\lambda/2} n^{-\delta/2}\ \forall i,j = 1,2,\ldots,K$ and $m^{2+\lambda} = o(n^{1-\delta})$.  Then, by Proposition \ref{example1} and Examples \ref{examplenewmn},  SNR-ER holds.
\vskip 2pt

\noindent (C) \textbf{Merging Communities}: Sometimes, when two user communities cover the same subject matter and share similar contributors, they may wish to merge their communities to push their efforts forward in a desired direction. Suppose that the $1$st and the $K$th communities in $z$ merge into the $1$st community in $w$. In this situation, SNR-ER holds if the pre-connection probability between the $1$st and the $K$-th communities and the post-connection probability within the $1$st community are adequately separated and if the sizes of the $1$st and  the $K$-th communities are large before the change. Precisely, suppose $|\lambda_{1K} - \delta_{11}| > Cm^{-\lambda_1/2}n^{-\delta_1/2}$, $s_{1,z} s_{K,z} \geq Cm^{2-\lambda_2}n^{-\delta_2}$ and $m^{\lambda_1+\lambda_2} = o(n^{1-\delta_1-\delta_2})$ for some $C>0$ and $0 \leq \delta_1 + \delta_2 < 1, \lambda_1 \geq 0, 0 \leq \lambda_2\leq 2$. Then, by Proposition \ref{example1} and Example \ref{examplenewmn},  SNR-ER holds. 
\vskip 2pt

\noindent (D) \textbf{Splitting communities}: One community often splits into two communities when conflicts and disagreements arise among its members.
Suppose that the $1$st community in $z$ splits into the $1$st and $K$th communities in $w$.  In this case, SNR-ER holds if the pre-connection probability within 
the $1$st community and the post-connection probability between the $1$st and the $K$th communities are adequately separated and the size of the $1$st and the $K$th communities are large after the change. Suppose $|\lambda_{11} - \delta_{1K}| > Cm^{-\lambda_1/2}n^{-\delta_1/2}$,  $s_{1,w} s_{K,w} \geq Cm^{2-\lambda_2}n^{-\delta_2}$ and $m^{\lambda_1+\lambda_2} = o(n^{1-\delta_1-\delta_2})$ for some $C>0$ and $0 \leq \delta_1 + \delta_2 < 1, \lambda_1 \geq 0, 0 \leq \lambda_2 \leq 2$. Then, by Proposition  \ref{example1} and Example \ref{examplenewmn},  SNR-ER holds. 

\begin{remark} \label{rem: sparsecomp}
Examples (A)-(D) above and Proposition \ref{example1} hold for both dense and sparse networks. However, as discussed in Remark \ref{rem: sparsesnr},  for sparse networks, a large enough number of time points $n$ is required compared to the total number of nodes $m$. This is because that in a sparse network, there are relatively few edges
to contribute to the total signal in Proposition \ref{example1}. 
\end{remark}

Next, we discuss two examples for a dense network regime wherein the SNR-ER condition fails to hold, but SNR-DSBM does.

\noindent (E) If most edges change their connection probabilities by an amount of  $C_1/\sqrt{n}$ for some $C_1>0$, then SNR-ER does not hold, but SNR-DSBM does. Specifically, let $A(C_1) = \{(i,j):\ |\lambda_{z(i)z(j)} - \delta_{w(i)w(j)}| = C_1/\sqrt{n}\}$. Suppose  $|A(C_1)| = C_2m^2$ for some $C_1,C_2>0$,  $|\lambda_{z(i)z(j)} - \delta_{w(i)w(j)}| = 0\ \forall (i,j) \in A^c$ and $\displaystyle \min (\min_{u} s_{u,z},\min_{u} s_{u,w})  \to \infty$. Then $\frac{n}{m^2} ||\text{Ed}_{z}(\Lambda) - \text{Ed}_{w}(\Delta)||_F^2 = C_1^2 C_2 \centernot\longrightarrow \infty$ but $\frac{n}{K^2} ||\text{Ed}_{z}(\Lambda) - \text{Ed}_{w}(\Delta)||_F^2 = C_1^2C_2\frac{m^2}{K^2} \to \infty$. 
\vskip 2pt

\noindent (F) If the connection probabilities between the smallest community and the remaining ones change by $C/\sqrt{n}$ for some $C>0$, then for an
appropriate choice of $K$ and smallest community size, SNR-ER does not hold, but SNR-DSBM does. Specifically, suppose $z=w$, $K = C_1 m^{\delta_1/2}$, $\displaystyle \min_{u} s_{u,z} = s_{1,z} = C_2m^{\delta_2 /2}$, $|\lambda_{1j} - \delta_{1j}| = C_3/\sqrt{n}\ \forall j$ and $|\lambda_{ij} - \delta_{ij}| = 0\ \forall i \neq 1, j \neq 1$ for some $C_1,C_2,C_3 >0$ and $0 < \delta_1 + \delta_2 \leq 2$, $\delta_1<\delta_2$. Then   $\frac{n}{m^2} ||\text{Ed}_{z}(\Lambda) - \text{Ed}_{w}(\Delta)||_F^2 = C_3^2C_2m^{-(2-\delta_2)} \to 0$ but $\frac{n}{K^2} ||\text{Ed}_{z}(\Lambda) - \text{Ed}_{w}(\Delta)||_F^2 = C_3^2 C_2 C_1^{-2} m^{\delta_2 - \delta_1} \to \infty$. 

\medskip
Note that examples (E)-(F) only deal with the SNR-DBSM condition and do not address the equally important (A1) condition for the ``every time point clustering
algorithm" to work. The next example provides a dense setting where SNR-ER does not hold, but both SNR-DSBM and (A1) hold.

\vskip 2pt
\noindent (G) Consider the model and assumptions in Example \ref{example: misclass1new}. Suppose $p_2 = p_1 + \frac{1}{\sqrt{n}}$. Then, $mn^{-1} \leq ||\text{Ed}_{z}(\Lambda) - \text{Ed}_{w}(\Delta)||_F^2 \leq m^2 n^{-1}$. Hence, SNR-ER does not hold. Further, if $K^2 = o(m)$, then SNR-DSBM holds. 
Thus, SNR-DSBM and (A1) hold if $K =Cm^{\lambda}$,  $m_{\min} = Cm^{\delta}$ and $n = m^{4\delta-2}$ for some $\lambda \in [0,0.5), \delta \in [0,1]$ and $2\delta - \lambda-1 >0$.

The upshot of the above examples is that due to the structure of the DSBM, there are many instances arising in real settings where SNR-ER holds.
On the other hand, as an example (G) illustrates, some rather special settings are required for SNR-ER to fail, while both SNR-DSBM and (A1) hold.
Thus, it is relatively safe to assume that the 2-step algorithm is applicable across a wide range of network settings,  making it a very attractive option to practitioners.

\subsection{Numerical Illustration}  \label{subsec: simulation}

Next, we discuss the performance of the three change-point estimates $\tilde{\tau}_{m,n}$,  $\hat{\tau}_{m,n}$ and $\tilde{\tilde{\tau}}_{m,n}$ based on synthetic data generated according to the following mechanism, focusing on the impact of the parameters $m$, $n$ and small community connection probabilities on their performance.

\noindent \textbf{Effect of $m$ and $n$:} We simulate from the following DSBMs (1), (2), (3) for three choices of $(m,n, n\tau_n) = (60,60,30), (500,20,10), (500,100,50)$ and two choices of $\lambda = 0, 1/20$.  These results are presented  in Tables \ref{table: 1}-\ref{table: 6}. Although the following DSBMs satisfy the assumptions in Proposition \ref{example1}, SNR-ER may be small for dealing with finite samples.\footnote{Note that by Proposition \ref{example1} and Example \ref{examplenewmn},   for finite number of communities (K is finite), $\frac{n}{m^2}||\text{Ed}_{z}(\Lambda) - \text{Ed}_{w}(\Delta)||_F^2 = O(m^{-\lambda
_1-\lambda_2}n^{1-\delta_1-\delta_2})$  and $\frac{n}{K^2}||\text{Ed}_{z}(\Lambda) - \text{Ed}_{w}(\Delta)||_F^2 = O(m^{2-\lambda_1-\lambda_2} n^{1-\delta_1-\delta_2})$ and for balanced community with $\min\{|\lambda_{ij}-\lambda_{i^\prime j^\prime}|, |\delta_{ij}-\delta_{i^\prime j^\prime}|: (i,j) \neq (i^\prime, j^\prime), (i,j) \neq (j^\prime,i^\prime)\} \geq Cn^{-\delta}m^{-\lambda}$ ($C>0$, $\delta \geq 0$, $\lambda\geq 0$), we have $\nu_{m,n} = O(\frac{m^{1-\lambda}n^{-\delta}}{K})$,  we have $\frac{Km}{\nu_{m,n}^2} = O(n^{2\delta}/m^{1-2\lambda})$ and $\frac{m\sqrt{n}}{\nu_{m,n}^2} = O(\frac{n^{0.5+2\delta}}{m^{1-2\lambda}})$.  Thus, for small $n$  and large $m$, SNR-ER becomes small, but SNR-DSBM and (A1) hold. Moreover,  (A1) is not satisfied for large $\delta$ and $\lambda$  and  small $m$.}
\vskip 2pt
\noindent (1) \textbf{Reallocation of nodes}:  $K=2$, $z(i) = I(1 \leq i \leq m/2) + 2I(m/2 +1 \leq i \leq m)$, $w(2i-1) = 1,\ w(2i)=2\ \forall 1 \leq i \leq m/2$. 
$ \Lambda = \Delta = \left(\begin{array}{cc}
0.6 & 0.6 - \frac{1}{n^\delta m^\lambda} \\ 
0.6 - \frac{1}{n^\delta m^\lambda} & 0.6
\end{array}  \right)$ for $\delta = 1/20,1/10,1/4$. 
\vskip 2pt
\noindent (2) \textbf{Change in connectivity}:  $K=2$, $z(i) = w(i) = I(1 \leq i \leq m/2) + 2I(m/2 +1 \leq i \leq m)$, $ \Lambda = \left(\begin{array}{cc}
0.6 & 0.3 \\ 
0.3 & 0.6
\end{array}  \right)$, $\Delta = \Lambda + \frac{1}{n^{1/4}m^\lambda}J_2$.
\vskip 2pt
\noindent (3) \textbf{Merging communities}:  $K=3$, $z(i) = I(1 \leq i \leq 20) + 2I(21 \leq i \leq 40) + 3I(41 \leq i \leq 60)$, $w(i) = I(1 \leq i \leq 20,\ 41 \leq i \leq 60) + 2I(21 \leq i \leq 40)$, $\Lambda = \left(\begin{array}{ccc}
 0.6 & 0.3 & 0.6 - \frac{1}{n^{1/20}m^\lambda} \\ 
0.3 & 0.6 & 0.3 \\ 
0.6 - \frac{1}{n^{1/20}m^\lambda} & 0.3 & 0.6
\end{array}  \right)$, $\Delta = \left(\begin{array}{ccc}
 0.6 & 0.3 & 0 \\ 
0.3 & 0.6 & 0 \\ 
0 & 0 & 0
\end{array}  \right)$. \\
Splitting communities and merging communities are similar once we interchange $z$, $w$, and $\Lambda$, $\Delta$. 
\vskip 2pt
\begin{center}
\begin{table} [h!] 
\caption{\label{table: 1}
Illustrating the performance of  the change-point estimators with $m=60, n=60, n\tau_{m,n}=30$, $\lambda=0$ based on $100$ replicates and 
DSBMs  (1), (2) and (3). Figures in brackets are frequencies of the number of change-points the corresponding change-point is observed. 
Further, $F_n := ||\text{Ed}_{z}(\Lambda) -\text{Ed}_{w}(\Delta)||_F^2$.}
\vskip 2pt
\begin{tabular}{| m{1.9cm} | m{1.9cm} | m{1.9cm} | m{2.4cm} | m{2.4cm}| m{2.4cm}| }
\hline
 &\multicolumn{3}{|c|}{Reallocation of nodes,in (1)} & Change  \text{in connectivity}, in (2) &  Merging \text{communities}, in (3) \\ 
 \hline
  & $\delta = 1/20$ & $\delta = 1/10$ & $\delta = 1/4$ & &  \\
\hline
$F_n$ & $1195.246$ & $793.6742$ & $232.379$ & $2390.49$ &   $531.2205$ \\ 
\hline 
$\frac{n}{m^2}F_n$  & $19.92077$ & $13.2279$ & $3.873$ & $39.84$ &  $8.8537$ \\ 
\hline 
$\frac{n}{K^2}F_n$ & $17928.69$ & $11905.11$ & $3485.685$ & $35837.39$ &  $3541.47$   \\ 
\hline 
$\frac{Km}{\nu_{m,n}^2}$ & $0.198$ & $0.3$ & $1.2$ & $1.03$ &  $3.11$  \\ 
\hline 
$\frac{m\sqrt{n}}{\nu_{m,n}^2}$ & $0.7777$ & $1.1712$ & $4$ & $5.738$ &  $8.03$  \\ 
\hline 
$\tilde{\tau}_{m,n}$ & $30(90)$, $29(4)$, $28(2)$, $31(4)$ & 30(85), 29(6), 28(4), 31(5) & 30(88), 29(7), 28(5) &  30(85), 29(5), 28(6), 31(4) & 30(88), 29(3), 28(4), 31(5)\\
\hline 
$\hat{\tau}_{m,n}$ & $30$($88$),  $28(5)$, $31(3)$, $34(4)$  & $30$($83$), $29(3)$,  $28(7)$, $31(7)$ & $30(80)$, $29(9)$, $28(7)$, $31(4)$ & $30(83)$, $28(10)$, $31(7)$ & $30(88)$, $29(8)$,  $28(4)$  \\ 
\hline 
$\tilde{\tilde{\tau}}_{m,n}$  & $30(85)$, $28(5)$, $31(6)$, $32(4)$ & $30(80)$, $28(8)$, $31(6)$, $32(4)$, $33(2)$ & $30(34)$, $22(42)$, $25(10)$,  $33(14)$ & $30(40)$, $21(30)$, $28(18)$, $26(12)$ & $30(21)$, $19(10)$, $23 (48)$, $26(14)$, $38(7)$  \\ 
\hline 
\end{tabular} 
\end{table}
\end{center}

\begin{center}
\begin{table} [h!] 
\caption{\label{table: 2}
Illustrating the performance of  the change-point estimators with $m=500, n=20, n\tau_{m,n}=10$, $\lambda=0$ based on $100$ replicates and 
DSBMs  (1), (2) and (3). Figures in brackets are frequencies of the number of times the corresponding change-point is observed. Further, $F_n := ||\text{Ed}_{z}(\Lambda) -\text{Ed}_{w}(\Delta)||_F^2$.}
\vskip 2pt
\begin{tabular}{| m{1.9cm} | m{1.9cm} | m{1.9cm} | m{2.4cm} | m{2.4cm}| m{2.4cm}|  }
\hline
 &\multicolumn{3}{|c|}{Reallocation of nodes, in (1)} & Change  \text{in connectivity}, in (2) &  Merging \text{communities}, in (3) \\ 
 \hline
  & $\delta = 1/20$ & $\delta = 1/10$ & $\delta = 1/4$ & &  \\
\hline
$F_n$ & $92641.81$ & $68660.03$ & $27950.85$ & $185283.6$ &   $41174.14$ \\ 
\hline 
$\frac{n}{m^2}F_n$  & $7.411$ & $5.49$ & $2.24$ & $14.82$ &  $3.294$ \\ 
\hline 
$\frac{n}{K^2}F_n$ & $463209$ & $343300.2$ & $139754.2$ & $926418.1$ &  $91498.08$  \\ 
\hline 
$\frac{Km}{\nu_{m,n}^2}$ & $0.0216$ & $0.0291$ & $0.0716$ & $0.072$ &  $0.3732$  \\ 
\hline 
$\frac{m\sqrt{n}}{\nu_{m,n}^2}$ & $0.0483$ & $0.0651$ & $0.16$ & $0.16$ &  $0.576$  \\ 
\hline 
$\tilde{\tau}_{m,n}$ & 10(88), 9(6), 11(6) & 10 (85), 9(5), 8(3), 11(6), 12(1) & 10(90), 9(5), 8(1), 12(4) & 10(89), 9(5), 11(4), 12(2) & 10(88), 9(6), 8(4), 12(2) \\
\hline
$\hat{\tau}_{m,n}$ & $10$($90$),  $8(6)$, $11(4)$ & $10$($88$), $8(5)$, $11(7)$  & $10(39)$, $3(23)$, $7(30)$,  $13(8)$ & $10(85)$, $9(7)$, $8(8)$ & $10(83)$, $9(7)$, $8(4)$  $11(4)$, $12(2)$  \\ 
\hline 
$\tilde{\tilde{\tau}}_{m,n}$  & $10(85)$, $9(7)$, $11(5)$, $12(3)$ & $10(82)$,  $8(6)$, $11(5)$, $12(7)$ & $10(77)$, $8(11)$, $9(4)$,  $11(8)$ & $10(83)$, $9(9)$, $8(4)$, $11(4)$  & $10(80)$, $8(9)$, $9(7)$ $11(4)$  \\ 
\hline 
\end{tabular} 
\end{table}
\end{center}

\begin{center}
\begin{table} [h!] 
\caption{\label{table: 3}Illustrating the performance of  the change-point estimators with $m=500, n=100, n\tau_{m,n}=50$, $\lambda=0$, based on $100$ replicates and 
DSBMs  (1), (2) and (3). Figures in brackets are frequencies of the number of times the corresponding change-point is observed. Further, $F_n := ||\text{Ed}_{z}(\Lambda) -\text{Ed}_{w}(\Delta)||_F^2$.}
\vskip 2pt
\begin{tabular}{| m{1.9cm} | m{1.9cm} | m{1.9cm} | m{2.4cm} | m{2.4cm}| m{2.4cm}|  }
\hline
 &\multicolumn{3}{|c|}{Reallocation of nodes, in (1)} & Change  \text{in connectivity}, in (2) &  Merging \text{communities}, in (3) \\ 
 \hline
  & $\delta = 1/20$ & $\delta = 1/10$ & $\delta = 1/4$ & &  \\
\hline
$F_n$ & $78869.67$ & $49763.4$ & $12500$ & $157739.3$ &   $35053.19$ \\ 
\hline 
$\frac{n}{m^2}F_n$  & $31.548$ & $19.905$ & $5$ & $63.1$ &  $14.0213$ \\ 
\hline 
$\frac{n}{K^2}F_n$ & $1971742$ & $1244085$ & $312500$ & $3943483$ &  $1389479.8$  \\ 
\hline 
$\frac{Km}{\nu_{m,n}^2}$ & $0.02536$ & $0.04019$ & $0.16$ & $0.1778$ &  $0.3732$  \\ 
\hline 
$\frac{m\sqrt{n}}{\nu_{m,n}^2}$ & $0.1268$ & $0.201$ & $0.8$ & $0.889$ &  $1.244$  \\
\hline 
$\tilde{\tau}_{m,n}$ & 50(90), 49(4), 48(4), 51(2) & 50(89), 49(8), 51(3) & 50(93), 49(5), 48(1), 51(1) & 50(91), 49(6), 48(3) & 50(89), 49(7), 48(1), 51(3) \\ 
\hline 
$\hat{\tau}_{m,n}$ & $50$($92$),  $48(3)$, $51(5)$ & $50$($88$), $49(7)$, $48(3)$, $47(1)$, $51(1)$ & $50(82)$, $49(5)$, $47(3)$, $52(7)$, $53(3)$ & $50(84)$, $49(4)$, $47(2)$, $51(6)$, $52(4)$ & $50(87)$, $49(6)$, $51(3)$,  $52(4)$  \\ 
\hline 
$\tilde{\tilde{\tau}}_{m,n}$  & $50(87)$, $49(7)$, $48(5)$, $91(1)$ & $50(88)$, $48(6)$, $47(2)$, $51(4)$  & $50(82)$,  $49(7)$, $48(5)$, $51(4)$, $52(2)$ & $50(82)$, $49(7)$, $48(5)$, $52(6)$ & $50(87)$, $49(4)$, $47(3)$, $51(6)$  \\ 
\hline 
\end{tabular} 
\end{table}
\end{center}

\vskip 2pt
\begin{center}
\begin{table} [h!] 
\caption{\label{table: 4}
Illustrating the performance of  the change-point estimators with $m=60, n=60, n\tau_{m,n}=30$, $\lambda=1/20$ based on $100$ replicates and 
DSBMs  (1), (2) and (3). Figures in brackets are frequencies of the number of change-points the corresponding change-point is observed. 
Further, $F_n := ||\text{Ed}_{z}(\Lambda) -\text{Ed}_{w}(\Delta)||_F^2$.}
\vskip 2pt
\begin{tabular}{| m{1.9cm} | m{1.9cm} | m{1.9cm} | m{2.4cm} | m{2.4cm}| m{2.4cm}| }
\hline
 &\multicolumn{3}{|c|}{Reallocation of nodes,in (1)} & Change  \text{in connectivity}, in (2) &  Merging \text{communities}, in (3) \\ 
 \hline
  & $\delta = 1/20$ & $\delta = 1/10$ & $\delta = 1/4$ & &  \\
\hline
$F_n$ & $793.67$ & $527.82$ & $154.31$ & $187.35$ &   $231.02$ \\ 
\hline 
$\frac{n}{m^2}F_n$  & $13.23$ & $8.784$ & $2.572$ & $26.456$ &  $3.85$ \\ 
\hline 
$\frac{n}{K^2}F_n$ & $11905.11$ & $7905.3$ & $2314.58$ & $23810.26$ &  $1540.12$   \\ 
\hline 
$\frac{Km}{\nu_{m,n}^2}$ & $0.3$ & $0.455$ & $1.56$ & $6.024$ &  $1.021$  \\ 
\hline 
$\frac{m\sqrt{n}}{\nu_{m,n}^2}$ & $1.17$ & $1.764$ & $6.024$ & $5.738$ &  $2.64$  \\ 
\hline 
$\tilde{\tau}_{m,n}$ & 30(88), 29(7), 28(3), 32(2) & 30(91), 29(5), 28(1), 31(3) & 30(92), 28(2), 31(6) & 30(88), 29(8), 28(2), 31(2) & 30(89), 29(3), 31(7), 32(1) \\
\hline
$\hat{\tau}_{m,n}$ & $30$($84$),  $29(7)$, $32(5)$, $28(4)$  & $30$($87$), $29(6)$,  $31(7)$ & $30(78)$, $29(11)$, $28(8)$, $32(3)$ & $30(90)$, $28(2)$, $29(8)$ & $30(88)$, $31(7)$,  $28(5)$  \\ 
\hline 
$\tilde{\tilde{\tau}}_{m,n}$  & $30(83)$, $29(5)$, $28(7)$, $31(5)$ & $30(23)$, $26(23)$, $41(34)$, $22(14)$, $23(6)$ & $30(29)$, $22(45)$, $19(10)$,  $41(16)$ & $30(35)$, $22(21)$, $24(6)$, $19(18)$,  $42(20)$ & $30(21)$, $22(20)$, $17 (48)$,  $37(11)$  \\ 
\hline 
\end{tabular} 
\end{table}
\end{center}

\begin{center}
\begin{table} [h!]
\caption{\label{table: 5}
Illustrating the performance of  the change-point estimators with $m=500, n=20, n\tau_{m,n}=10$, $\lambda=1/20$ based on $100$ replicates and 
DSBMs  (1), (2) and (3). Figures in brackets are frequencies of the number of times the corresponding change-point is observed. Further, $F_n := ||\text{Ed}_{z}(\Lambda) -\text{Ed}_{w}(\Delta)||_F^2$.}
\vskip 2pt
\begin{tabular}{| m{1.9cm} | m{1.9cm} | m{1.9cm} | m{2.4cm} | m{2.4cm}| m{2.4cm}|  }
\hline
 &\multicolumn{3}{|c|}{Reallocation of nodes, in (1)} & Change  \text{in connectivity}, in (2) &  Merging \text{communities}, in (3) \\ 
 \hline
  & $\delta = 1/20$ & $\delta = 1/10$ & $\delta = 1/4$ & &  \\
\hline
$F_n$ & $49763.4$ & $36881.4$ & $15014.7$ & $99526.8$ &   $14501.2$ \\ 
\hline 
$\frac{n}{m^2}F_n$  & $3.98$ & $2.95$ & $1.2$ & $7.96$ &  $1.13$ \\ 
\hline 
$\frac{n}{K^2}F_n$ & $24881.7$ & $184406.8$ & $75070.2$ & $497634$ &  $32224.9$  \\ 
\hline 
$\frac{Km}{\nu_{m,n}^2}$ & $0.04$ & $0.054$ & $0.13$ & $0.13$ &  $0.13$  \\ 
\hline 
$\frac{m\sqrt{n}}{\nu_{m,n}^2}$ & $0.089$ & $0.121$ & $0.298$ & $0.298$ &  $0.1997$  \\ 
\hline
$\tilde{\tau}_{m,n}$ & 10(89), 9(8), 8(3) & 10(87), 9(5), 8(3), 11(5) & 10(88), 9(3), 8(2), 11(6), 12(1) & 10(90), 9(8), 8(2) & 10(89), 9(4), 8(1), 11(6) \\
\hline 
$\hat{\tau}_{m,n}$ & $10$($87$),  $8(4)$, $11(9)$ & $10$($85$), $8(8)$, $11(7)$  & $10(29)$, $3(23)$, $6(35)$,  $15(13)$ & $10(85)$, $9(9)$, $8(6)$ & $10(28)$, $7(33)$, $4(14)$,  $15(25)$  \\ 
\hline 
$\tilde{\tilde{\tau}}_{m,n}$  & $10(88)$, $9(8)$, $11(4)$ & $10(89)$,  $8(6)$, $11(5)$ & $10(79)$, $8(9)$, $9(5)$,  $11(7)$ & $10(83)$, $9(10)$, $8(7)$  & $10(85)$, $8(7)$, $9(5)$ $11(3)$  \\ 
\hline 
\end{tabular} 
\end{table}
\end{center}

\begin{center}
\begin{table} [h!] 
\caption{\label{table: 6} Illustrating the performance of  the change-point estimators with $m=500, n=100, n\tau_{m,n}=50$, $\lambda=1/20$, based on $100$ replicates and 
DSBMs  (1), (2) and (3). Figures in brackets are frequencies of the number of times the corresponding change-point is observed. Further, $F_n := ||\text{Ed}_{z}(\Lambda) -\text{Ed}_{w}(\Delta)||_F^2$.}
\vskip 2pt
\begin{tabular}{| m{1.9cm} | m{1.9cm} | m{1.9cm} | m{2.4cm} | m{2.4cm}| m{2.4cm}|  }
\hline
 &\multicolumn{3}{|c|}{Reallocation of nodes, in (1)} & Change  \text{in connectivity}, in (2) &  Merging \text{communities}, in (3) \\ 
 \hline
  & $\delta = 1/20$ & $\delta = 1/10$ & $\delta = 1/4$ & &  \\
\hline
$F_n$ & $42365.37$ & $26730.8$ & $67145$ & $84731.13$ &   $11943.6$ \\ 
\hline 
$\frac{n}{m^2}F_n$  & $16.94$ & $10.69$ & $2.686$ & $33.89$ &  $4.65$ \\ 
\hline 
$\frac{n}{K^2}F_n$ & $1059139$ & $668271.6$ & $167862.2$ & $2118278$ &  $132706.8$  \\ 
\hline 
$\frac{Km}{\nu_{m,n}^2}$ & $0.047$ & $0.0748$ & $0.298$ & $0.298$ &  $0.157$  \\ 
\hline 
$\frac{m\sqrt{n}}{\nu_{m,n}^2}$ & $0.236$ & $0.374$ & $0.49$ & $1.49$ &  $0.524$  \\ 
\hline
$\tilde{\tau}_{m,n}$ & 50(88), 49(6), 48(3), 51(3) & 50(90), 49(3), 51(5), 52(2) & 50(88), 49(4), 48(2), 51(4), 52(2) & 50(92), 49(3), 51(5) & 50(87), 49(7), 48(2), 51(4) \\
\hline 
$\hat{\tau}_{m,n}$ & $50$($88$),  $48(5)$, $49(7)$ & $50$($91$), $49(6)$, $48(3)$ & $50(78)$, $49(7)$, $48(5)$, $52(8)$, $53(2)$ & $50(90)$, $49(6)$, $48(2)$, $51(2)$ & $50(87)$, $49(8)$, $51(5)$ \\ 
\hline 
$\tilde{\tilde{\tau}}_{m,n}$  & $50(83)$, $49(8)$, $48(7)$, $53(2)$ & $50(88)$, $48(9)$, $47(2)$, $51(1)$  & $50(85)$,  $49(5)$, $48(5)$, $51(5)$ & $50(82)$, $49(5)$, $48(8)$, $51(2)$, $52(3)$ & $50(85)$, $49(8)$, $48(2)$, $51(5)$  \\ 
\hline 
\end{tabular} 
\end{table}
\end{center}

The following conclusions are in accordance with the results presented in Tables \ref{table: 1} through \ref{table: 6}. 
\vskip 2pt
\noindent (a) SNR-ER holds for  large $n$, small $\delta$, $\lambda$  and  large  signal $||\text{Ed}_{z}(\Lambda) - \text{Ed}_{w}(\Delta)||_F^2$. We  observe large SNR-ER and consequently good performance of $\hat{\tau}_{m,n}$, throughout  Tables \ref{table: 1}-\ref{table: 6} except Column $3$ in Table \ref{table: 2} and Column $3$ and $5$ of Table \ref{table: 5}, which involve a small $n$ and large $\delta$, $\lambda$, leading to poor performance of $\hat{\tau}_{m,n}$.
\vskip 2pt
\noindent (b) SNR-ER implies SNR-DSBM and thus a large SNR-DSBM is observed throughout Tables \ref{table: 1}-\ref{table: 6}.  
Moreover, if $\nu_{m,n} = O(\frac{m^{1-\lambda}n^{-\delta}}{K})$ for some $\delta >0$, then (A1) holds for small $\delta$, $\lambda$, $n$, small $K$ and large $m$. Thus, (A1) holds and $\tilde{\tilde{\tau}}_{m,n}$ exhibits good performance throughout Tables \ref{table: 1}-\ref{table: 6} except Columns $3-5$  in Table \ref{table: 1} and Columns $2-5$ in Table \ref{table: 4} where $\delta$ and $\lambda$ are large and $m$ small. 
\vskip 2pt
\noindent (c) Throughout Tables \ref{table: 1}-\ref{table: 6}, SNR-DSBM holds and $\tilde{\tau}_{m,n}$ exhibits good performance, as expected. The estimator $\tilde{\tau}_{m,n}$ performs equally well to $\hat{\tau}_{m,n}$ and $\tilde{\tilde{\tau}}_{m,n}$, whenever SNR-ER and (A1) are satisfied. In all other settings, $\tilde{\tau}_{m,n}$ clearly outperforms them.
For example, $\tilde{\tau}_{m,n}$ performs better than $\tilde{\tilde{\tau}}_{m,n}$ in Columns 3-5 of Table \ref{table: 1} and Columns 2-5 of Table \ref{table: 4} and better than $\hat{\tau}_{m,n}$ in Column 3 of Table \ref{table: 2} and Columns 3 and 5 of Table \ref{table: 5}.
\vskip 2pt

The above numerical results amply demonstrate the competitive nature of the computationally inexpensive 2-step algorithm under the settings  posited. However, note that the connection probabilities assumed are in general strong that leads to a large $F_n$ signal. Next, we illustrate the performance for the case of excessively small connection probabilities.

\noindent \textbf{Effect of excessively small connection probabilities:}  In this paper, we assume that the entries of $\Lambda$ and $\Delta$ are bounded away from $0$ and $1$,  to establish results on the asymptotic distribution of the change-point estimators (see Section \ref{sec: ADAP}). This assumption is, however,   not needed for establishing consistency and the convergence rate of the estimators.  Here  we consider DSBMs with small entries in $\Lambda$ and $\Delta$ and  illustrate their effect on the performance of the change-point estimators based on simulated results. For DSBMs (4) and (5),  we consider $(m,n,n\tau_{m,n}) =(60,60,30)$.

\vskip 2pt
\noindent (4) \textbf{Reallocation of nodes}:  Let $K=2$, $z(i) = I(1 \leq i \leq m/2) + 2I(m/2 +1 \leq i \leq m)$, $w(2i-1) = 1,\ w(2i)=2\ \forall 1 \leq i \leq m/2$. \\ Further,
$ \Lambda = \Delta =\left(\begin{array}{cc}
\frac{1}{n^{\lambda_1}m^{\lambda_2}} & \frac{1}{n^{\lambda_1}m^{\lambda_2}} - \frac{1}{n^{\delta_1}m^{\delta_2}} \\ 
\frac{1}{n^{\lambda_1}m^{\lambda_2}} - \frac{1}{n^{\delta_1}m^{\delta_2}} & \frac{1}{n^{\lambda_1}m^{\lambda_2}}
\end{array}  \right)$\\ for $(\delta_1, \delta_2,\lambda_1,\lambda_2) = (3/8,3/8,1/4,1/4), (5/8,1/4, 1/4,3/8)$. 
\vskip 2pt
\noindent (5) \textbf{Change in connectivity}:  Let $K=2$, $z(i) = w(i) = I(1 \leq i \leq m/2) + 2I(m/2 +1 \leq i \leq m)$, $ \Lambda = \left(\begin{array}{cc}
\frac{2}{n^{\lambda_1}m^{\lambda_2}} & \frac{1}{n^{\lambda_1}m^{\lambda_2}}  \\ 
\frac{1}{n^{\lambda_1}m^{\lambda_2}}  & \frac{2}{n^{\lambda_1}m^{\lambda_2}} 
\end{array}  \right)$, $\Delta = \Lambda + \frac{1}{n^{1/8}m^{1/8}}J_2$, $(\lambda_1,\lambda_2) = (1/4,1/4),(1/4,3/8)$.
\vskip 2pt
The results are presented in Table \ref{table: 7}. For models (4) and (5),  SNR-ER is proportional to $n^{-2\delta_1}m^{-2\delta_2}$ and $n^{-1/4}m^{-1/4}$ respectively. 
The choices of $\delta_1$ and $\delta_2$ taken in (4) suffice to make the connection probabilities in $\Lambda$ and $\Delta$ small enough, so that the 
 resulting SNR-ER is small. Consequently,  $\hat{\tau}_{m,n}$ does not perform well in Columns $1$ and $2$ of Table \ref{table: 7}.  On the other hand, $\delta_1 = 1/8$ and $\delta_2=1/8$ in (5) are adequate to induce a large SNR-ER, as reflected in the improved performance of $\hat{\tau}_{m,n}$ for $\tau_{m,n}$ in Columns $3$ and $4$ of Table \ref{table: 7}. On the other hand,  while $m/K$ in Table \ref{table: 7} is large enough to satisfy SNR-DSBM,  $\nu_{m,n}$ is proportional to $n^{-\lambda_1}m^{-\lambda_2}$ and by the choices of $\lambda_1$, $\lambda_2$ in models (4) and (5), quite small, as a consequence of which (A1) does not hold for the settings depicted in Table \ref{table: 7}, and  the performance of $\tilde{\tilde{\tau}}_n$ suffers. The estimator $\tilde{\tau}_{m,n}$ performs very well throughout Table \ref{table: 7}, since SNR-DSBM holds.

\begin{center}
\begin{table} [h!] 
\caption{\label{table: 7}
Illustrating the performance of  the change-point estimators with $m=60, n=60, n\tau_{m,n}=30$, based on $100$ replicates and 
DSBMs  (4) and (5). Figures in brackets are frequencies of the number of times the corresponding change-point is observed. Further, $F_n := ||\text{Ed}_{z}(\Lambda) -\text{Ed}_{w}(\Delta)||_F^2$.}
\vskip 2pt
\begin{tabular}{| m{1cm} | m{3cm} | m{3cm} | m{3cm} | m{3cm} |  }
\hline
  &\multicolumn{2}{|c|}{Reallocation of nodes, in (4)} &  \multicolumn{2}{|c|}{Change in connectivity, in (5)}   \\ 
 \hline
  & $(\delta_1,\delta_2,\lambda_1,\lambda_2) = (3/8,3/8,1/4,1/4)$ & $(\delta_1,\delta_2,\lambda_1,\lambda_2) = (5/8,1/4,1/4,3/8)$ & $(\lambda_1,\lambda_2) = (1/4,1/4)$ & $(\lambda_1,\lambda_2)=(1/4,3/8)$  \\
\hline
$F_n$ & $3.873$ & $1.3915$ & $464.758$ & $464.758$  \\ 
\hline 
$\frac{n}{m^2}F_n$  & $0.0645$ & $0.0232$ & $7.746$ & $7.746$  \\ 
\hline 
$\frac{n}{K^2}F_n$ & $58.095$ & $20.874$ & $6971.37$ & $6971.37$   \\ 
\hline 
$\frac{Km}{\nu_{m,n}^2}$ & $61.968$ & $172.466$ & $8$ & $22.265$   \\ 
\hline 
$\frac{m\sqrt{n}}{\nu_{m,n}^2}$ & $240$ & $667.96$ & $30.984$ & $86.232$   \\ 
\hline 
$\tilde{\tau}_{m,n}$ & 30(85), 29(6), 28(3), 31(5), 32(1) & 30(88), 29(4), 31(8) & 30(91), 29(8), 28(1) & 30(89), 29(5), 28(2), 31(4) \\
\hline
$\hat{\tau}_{m,n}$ & $30$($15$), $27(18)$,  $24(38)$, $22(19)$, $41(10)$ & $30$($21$), $28(4)$, $18(31)$, $39(14)$, $47(19)$, $49(11)$  & $30(88)$, $29(9)$,  $32(3)$ & $30(88)$, $28(6)$, $31(6)$   \\ 
\hline 
$\tilde{\tilde{\tau}}_{m,n}$  & $30(7)$, $28(23)$, $22(13)$, $18(18)$, $38(20)$,$43(19)$ & $26(15)$, $22(23)$,  $32(27)$, $39(25)$, $43(10)$  & $30(14)$, $25(8)$, $23(28)$, $38(34)$, $43(16)$  & $30(15)$, $26(10)$, $21(25)$, $21(28)$, $39(14)$,  $44(8)$   \\ 
\hline 
\end{tabular} 
\end{table}
\end{center}

\noindent \textbf{Simulation on setting (G):}  Consider the setup in setting (G) previously presented and let $n=20$, $n\tau_{m,n} = 10$, $m = 20$, $K=2$, $z(i) = w(i) = I(1 \leq i \leq 9) + 2I(10 \leq i \leq 20)$, $p_1^2=0.8$, $p_2 = p_1 + 1/\sqrt{n}$, $\Lambda = p_1I_2$, $\Delta=p_2I_2$. Simulation results are given in Table \ref{table: 8}. In this case, both SNR-DSBM and (A1) hold. Hence, both $\tilde{\tau}_{m,n}$ and $\tilde{\tilde{\tau}}_{m,n}$ perform well as expected. However, due to the failure of SNR-ER to hold, the performance of $\hat{\tau}_{m,n}$ suffers.

\begin{center}
\begin{table} [h!] 
\caption{\label{table: 8} Illustrating the performance of change-point estimators with $m=20, n=20, n\tau_{m,n}=10$ based on $100$ replicates for  DSBMs in setting (G). Figures in brackets are frequencies of the number of times the corresponding change-point is observed. Further, $F_n := ||\text{Ed}_{z}(\Lambda) -\text{Ed}_{w}(\Delta)||_F^2$.}
\vskip 2pt
\begin{tabular}{| m{1cm} | m{1cm} | m{1cm} | m{1cm} | m{1cm} | m{1cm} | m{2.0cm}| m{2.0cm} | m{2.1 cm}|}
\hline 
  & $F_n$ & $\frac{n}{m^2}F_n$ & $\frac{n}{K^2}F_n$ & $\frac{Km}{\nu_{m,n}^2}$ & $\frac{m\sqrt{n}}{\nu_{m,n}^2}$ & $\tilde{\tau}_{m,n}$ & $\hat{\tau}_{m,n}$ & $\tilde{\tilde{\tau}}_{m,n}$ \\ 
\hline 
\text{On (G)} & $10.1$ & $0.51$ & $50.5$ & $0.62$ & $1.38$  & \text{10(90), 9(5)}, \text{8(2), 11(4)} & \text{$4(42)$, $5(33)$}, \text{$8(12)$,} \text{$10(5)$,  $14(8)$} & \text{$10(78)$, $9(15)$,} \text{$8(5)$,} \text{$12(2)$} 
\\ 
\hline 
\end{tabular} 
\end{table}
\end{center}

\section{Asymptotic distribution of change-point estimators and adaptive inference}  \label{sec: ADAP}

Up to this point, the analysis focused on establishing consistency results for the derived change-point estimators and the corresponding convergence rates. Nevertheless, it is also of interest to provide confidence intervals, primarily for the change-point estimates. This issue is addressed next for $\tilde{\tau}_{m,n}$,  $\tilde{\tilde{\tau}}_{m,n}$, $\tau_{m,n}^{*}$ and $\hat{\tau}_{m,n}$, and as will be shortly seen the distributions are different depending on the behavior of the norm difference of the parameters before and after the change-point. Since this norm difference is not usually known a priori, we solve this problem through a data-based adaptive procedure to determine the quantiles of the asymptotic distribution, irrespective of the specific regime pertaining to the data at hand.

\subsection{Form of asymptotic distribution} \label{subsec: asympdist}
For ease of presentation, we focus on $\hat{\tau}_{m,n}$, but analogous results hold for $\tau_{m,n}^{*}$, $\tilde{\tau}_{m,n}$  and $\tilde{\tilde{\tau}}_{m,n}$.  As previously mentioned, there are three different regimes for its asymptotic distribution depending on:---(I) $||\text{Ed}_z(\Lambda) - \text{Ed}_w(\Delta)||_F^2 \to \infty$, (II) $||\text{Ed}_z(\Lambda) - \text{Ed}_w(\Delta)||_F^2 \to 0$ and (III) $||\text{Ed}_z(\Lambda) - \text{Ed}_w(\Delta)||_F \to c>0$. 

We need additional regularity assumptions (A2)-(A7) for the other regimes. Assumption (A2) stated below ensures that the connection probabilities are bounded away from $0$ and $1$, which  gives rise to a dense graph and ensures the positive asymptotic variance of the change-point estimators. 
 \vskip 2pt
\noindent \textbf{(A2)} For some $c>0$,  $0< c < \inf_{u,v} \lambda_{uv}, \inf_{u,v} \delta_{uv} \leq \sup_{u,v} \lambda_{uv}, \sup_{u,v} \delta_{uv} < 1-c<1$. 
\vskip 2pt
\noindent The  precise statements of (A3)-(A7) are given in Section \ref{subsec: assumption}, but a brief discussion of their roles is presented below.

Assumption (A3) is required in Regime II and  guarantees the existence  of the asymptotic variance of the change-point estimator. In Theorem \ref{lem: b2lse}(b), this variance is denoted by $\gamma^2$. 

In Regime III, we consider the following set of edges
\begin{eqnarray}
\mathcal{K}_n = \{(i,j): 1 \leq i,j \leq m,\ \  
|\lambda_{z(i)z(j)} - \delta_{w(i)w(j)}| \to 0\}
\end{eqnarray} 
and treat edges in $\mathcal{K}_n$ and $\mathcal{K}_0 = \mathcal{K}_n^c$ separately. Note that in Regime II, $\mathcal{K}_n = \{(i,j):\ 1\leq i,j \leq m\}$  is the set of all edges.  Hence, we can treat $\mathcal{K}_n$ in a similar way as in Regime II.  The role of (A4) in Regime III is analogous to that of (A3) in Regime II.  In the limit, $\mathcal{K}_n$ contributes a Gaussian process with a triangular drift term. (A4) ensures the existence of the asymptotic variance $\tilde{\gamma}^2$  of the limiting Gaussian process as well as the drift $c_1^2$.   (A5) is a technical assumption and is required for establishing asymptotic normality on $\mathcal{K}_n$.   Moreover,  $\mathcal{K}_0$ is a finite set. (A6) guarantees that $\mathcal{K}_0$ does not vary with $n$.  (A7) guarantees that  $\tau_{m,n} \to \tau^{*}$  for some $\tau^{*} \in (c^{*},1-c^{*})$,  $\lambda_{z(i)z(j)} \to a_{ij,1}^{*}$ and $\delta_{w(i)w(j)} \to a_{ij,2}^{*}$ for all $(i,j) \in \mathcal{K}_0$.  Consider the collection of independent Bernoulli random variables $\{A_{ij,l}^{*}: (i,j) \in \mathcal{K}_0, l=1,2\}$ with $E(A_{ij,l}^{*}) = a_{ij,l}^{*}$.  Then, (A7) implies $A_{ij,(\lfloor nf \rfloor, n)} \stackrel{\mathcal{D}}{\to} A_{ij,1}^{*}I(f<\tau^{*}) + A_{ij,2}^{*}I(f > \tau^{*})\ \forall (i,j) \in \mathcal{K}_0$.

The following Theorem summarizes the asymptotic distribution results.

\begin{theorem} \label{lem: b2lse}
Suppose  SNR-ER holds for $\hat{\tau}_{m,n}$,  SNR-DSBM, (NS) and (A1) hold for $\tilde{\tilde{\tau}}_{m,n}$, SNR-DSBM, (NS) and (A1*) hold for ${\tau}_{m,n}^{*}$ and SNR-DSBM holds for $\tilde{\tau}_{m,n}$. Then, the following statements are true. Let $\hat{\eta}_{m,n}$ denote generically any of the following change-point estimators: $\hat{\tau}_{m,n}$,  $\tilde{\tau}_{m,n}$, $\tilde{\tilde{\tau}}_{m,n}$ and $\tau_{m,n}^{*}$.

\noindent
{\bf (a)} If $||\text{Ed}_{z}(\Lambda)-\text{Ed}_{w}(\Delta)||_F^2 \to \infty$, then $\lim_{n \to \infty} P(\hat{\eta}_{m,n}=\tau_{m,n}) =1.$

\noindent
{\bf (b)} If (A2) and (A3) hold and  $||\text{Ed}_{z}(\Lambda)-\text{Ed}_{w}(\Delta)||_F^2 \to 0$, then
\begin{eqnarray}
n||\text{Ed}_{z}(\Lambda)-\text{Ed}_{w}(\Delta)||_F^2  (\hat{\eta}_{m,n}-\tau_{m,n}) \stackrel{\mathcal{D}}{\to}  \gamma^2 \arg \max_{h \in \mathbb{R}} (-0.5|h| + B_h),\ \ 
\end{eqnarray}
where $B_h$ denotes the standard Brownian motion.

\noindent
{\bf (c)} Suppose (A2), (A4)-(A7)  hold and $||\text{Ed}_{z}(\Lambda) -\text{Ed}_{w}(\Delta)||_F \to c >0$, then
\begin{eqnarray}
n (\hat{{\eta}}_{m,n} -\tau_{m,n}) &\stackrel{\mathcal{D}}{\to}& \arg \max_{h \in \mathbb{Z}} (D(h) + C(h) + A(h)) \nonumber 
\end{eqnarray}
where  for each ${h} \in \mathbb{Z}$,
\begin{eqnarray}
D (h+1)-D(h) &=& 0.5 {\rm{Sign}}(-h) c_1^2, \label{eqn: msethm2d}\\
C(h+1) - C(h) &=& \tilde{\gamma} W_{{h}},\ \ W_{{h}} \stackrel{\text{i.i.d.}}{\sim} \mathcal{N}(0,1), \ \ \ \ \ \ \  \ \label{eqn: msethm2c}\\
A(h+1) - A(h) &=& \sum_{k \in \mathcal{K}_0} \bigg[(Z_{ij, {h}} -a_{ij,1}^{*})^2 -(Z_{ij,{h}} -a_{ij,2}^{*})^2 \bigg],\  
  \label{eqn: msethm2a}
\end{eqnarray}
 $\{Z_{ij, {h}}\}$ are independently distributed with $Z_{ij, {h}} \stackrel{d}{=} A_{ij,1}^{*}I({h} < 0) + A_{ij,2}^{*}I({h} \geq  0)$ for all $(i,j) \in \mathcal{K}_0$. 
\end{theorem}

\begin{remark} \label{rem: sparsesmall}
As we have already noted, consistency of the change-point estimators holds for both dense and sparse graphs. The same conclusion holds for the asymptotic distribution under Regime I.  However,  (A2) is a crucial assumption for establishing the asymptotic distribution of the change-point estimator under Regimes II and III. (A2) implies that the random graph is dense. The different statistical and probabilistic aspects of sparse random graphs constitute a growing area of research in the recent literature. Most of the results in the sparse setting do not follow from the dense case and different tools and techniques are needed for their analysis; see Remark \ref{rem: sparse} for examples.  Though the convergence rate results established in Sections \ref{sec: DSBM} and \ref{sec: 2step} hold for the sparse setting,  deriving the asymptotic distribution of the change-point estimator under Regimes II and III in sparse random graphs requires further investigation. 
\end{remark}

\subsection{Adaptive Inference} \label{subsec: ADAP}

Next, we present a data adaptive procedure that does \textit{not} require a priori knowledge of the limiting regime. Recall the estimators ${\hat{\tau}}_{m,n}$, $\hat{\hat{\Lambda}}$, $\hat{\hat{\Delta}}$, $\hat{z}$ and $\hat{w}$ of the parameters in the DSBM model  given in (\ref{eqn: dsbmmodel}).  We generate independent $m \times m$ adjacency matrices $A_{t,n,\text{DSBM}}$, $1 \leq t \leq n$, where
\begin{eqnarray}
A_{t,n,\text{DSBM}} = ((A_{ij,(t,n),\text{DSBM}} )) \sim \begin{cases}\text{SBM}(\hat{z},\hat{\hat{\Lambda}}),\ \ \text{if $1 \leq t \leq \lfloor n{\hat{\tau}}_{m,n} \rfloor$} \\
\text{SBM}(\hat{w},\hat{\hat{\Delta}}),\ \ \text{if $\lfloor n{\hat{\tau}}_{m,n} \rfloor < t <n$}. \label{eqn: dsbmmodeladap}
\end{cases}
\end{eqnarray}
\noindent  Obtain
\begin{eqnarray}
\hat{h}_{\text{DSBM}} = \arg \max_{h \in (n(c^{*}- {\hat{\tau}}_{m,n}),n(1-c^{*}-{\hat{\tau}}_{m,n}))} \tilde{L}^{*} ({\hat{\tau}}_{m,n}+h/n,\hat{z},\hat{w},\hat{\hat{\Lambda}},\hat{\hat{\Delta}})
\end{eqnarray}
where
\begin{eqnarray}
\tilde{L}^* ({\hat{\tau}}_{m,n}+h/n,\hat{z},\hat{w},\hat{\hat{\Lambda}},\hat{\hat{\Delta}}) &=&  \frac{1}{n}\sum_{i,j=1}^{m} \bigg[\sum_{t=1}^{n{\hat{\tau}}_{m,n}+h}  (A_{ij,(t,n),\text{DSBM}} - \hat{\hat{\lambda}}_{\hat{z}(i),\hat{z}(j)})^2  \nonumber \\
&& \hspace{-0 cm}+  \sum_{t=n{\hat{\tau}}_{m,n}+h+1}^{n} (A_{ij,(t,n),\text{DSBM}} - \hat{\hat{\delta}}_{\hat{w}(i),\hat{w}(j)})^2  \bigg]. \hspace{1 cm}\label{eqn: estimatecccadap}
\end{eqnarray}

Theorem \ref{thm: adapdsbm} states the asymptotic distribution of $\hat{h}_{\text{DSBM}}$ under a stronger identifiability condition. Specifically,
\vskip 2pt
\noindent \textbf{SNR-ER-ADAP}:  $\frac{\sqrt{n}}{m^2 \sqrt{\log m}} ||\text{Ed}_{z}(\Lambda) - \text{Ed}_{w}(\Delta)||_F^2 \to \infty$
\vskip 2pt
\noindent It is easy to show that SNR-ER-ADAP holds if all  assumptions in Proposition \ref{example1} hold and 
\vskip 2pt 
\noindent \textbf{(AD)} $m = e^{n^{\delta_3}}$  for some $\delta_1,\delta_2,\delta_3>0$ and $0 <\delta_1 + \delta_2 + \delta_3 /2 < 1/2$ ($\delta_1, \delta_2$ are as in Proposition \ref{example1})
\newline
is satisfied. 
\vskip 2pt
\noindent Specifically, Examples (A)-(D) in Section \ref{sec: compare} satisfy SNR-ER-ADAP in the presence of condition (AD). 
\vskip 2pt
\noindent We also need the following condition to ensure that $\hat{z}$ and $\hat{w}$ are consistent estimates for $z$ and $w$, respectively.
\vskip 2pt
\noindent \textbf{(A1-ADAP)} $\frac{Km}{n\nu_{m,n}^2} ||\text{Ed}_{z}(\Lambda) - \text{Ed}_{w}(\Delta)||_2^{-1} \to 0$. 
\vskip 2pt
\noindent Under SNR-ER-ADAP,  one can reduce A1-ADAP to $\frac{K}{(n^3\log m)^{1/4}\nu_{m,n}^2 } \to O(1)$. This  holds  whenever  within and between community connection probabilities are equal (that is $\lambda_{ij} = q_1, \delta_{ij}=q_2\  \forall\  i \neq j$ and $\lambda_{ii}=p_1, \delta_{ii}=p_2\ \forall i$),  balanced communities of size $O(m/K)$ are present,  and their number is $K = O(m^{2/3})$. This is because the first two conditions implies $\nu_{m,n} = O(m/k)$ (for example see Example \ref{example: misclassnew}). 

We also require $\log m = o(\sqrt{n})$, so that the entries of $\text{Ed}_{\hat{z}}(\hat{\hat{\Lambda}})$ and $\text{Ed}_{\hat{w}}(\hat{\hat{\Delta}})$ are bounded away from $0$ and $1$.  Note that this assumption implies $0<\delta_3 < 1/2$ in (AD). 
\vskip 2pt

\begin{theorem} \label{thm: adapdsbm} \textbf{(Asymptotic distribution of $\hat{h}_{\text{DSBM}}$)}
 Suppose (A2), SNR-ER-ADAP and A1-ADAP  hold and $\log m = o(\sqrt{n})$. Then, the following results are true.

\noindent $(a)$ If $||\text{Ed}_{z}(\Lambda)-\text{Ed}_{w}(\Delta)||_F \to \infty$, then $\lim_{n \to \infty} P(\hat{h}_{\text{DSBM}}=0) =1$

\noindent $(b)$ If (A3) holds  and $||\text{Ed}_{z}(\Lambda) -\text{Ed}_{w}(\Delta)||_F \to 0$, then
\begin{eqnarray}
||\text{Ed}_{z}(\Lambda) - \text{Ed}_{w}(\Delta)||_F^2 \hat{h}_{\text{DSBM}} \stackrel{\mathcal{D}}{\to}  \gamma^{-2}\arg \max_{h \in \mathbb{R}} (-0.5|h| + B_h)\ \ \ \ 
\end{eqnarray}
where $B_h$ corresponds to a standard Brownian motion.

\noindent $(c)$ If  (A4)-(A7)  hold  and $||\text{Ed}_{z}(\Lambda) - \text{Ed}_{w}(\Delta)||_F \to c >0$, then
\begin{eqnarray}
\hat{h}_{\text{DSBM}} &\stackrel{\mathcal{D}}{\to}& \arg \max_{h \in \mathbb{Z}} (D(h) + C(h) + A(h)), \nonumber  
\end{eqnarray}
where $D(\cdot)$, $C(\cdot)$ and $A(\cdot)$  are same as (\ref{eqn: msethm2d})-(\ref{eqn: msethm2a}). 
 \end{theorem}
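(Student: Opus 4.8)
The plan is to argue conditionally on the observed sample $\mathcal{D}_n = \{A_{t,n}: 1 \le t \le n\}$ and then de-condition. Given $\mathcal{D}_n$, the synthetic matrices $A_{t,n,\text{DSBM}}$ in \eqref{eqn: dsbmmodeladap} form a dynamic Erd\H{o}s-R\'{e}nyi sequence with \emph{fixed} connectivity matrices $P_1 = \text{Ed}_{\hat{z}}(\hat{\hat{\Lambda}})$ and $P_2 = \text{Ed}_{\hat{w}}(\hat{\hat{\Delta}})$ and a \emph{fixed} change point at $\lfloor n\hat{\tau}_{n}\rfloor$; moreover $\tilde{L}^{*}$ in \eqref{eqn: estimatecccadap} holds these connectivity parameters frozen at their plug-in values, so $\hat{h}_{\text{DSBM}}$ is exactly the known-parameter change-point localisation statistic for this synthetic model at the $n^{-1}$ scale around $\hat{\tau}_n$. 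Hence its conditional law is governed by precisely the localised-process analysis that underlies Theorem~\ref{lem: b2lse} (itself an adaptation of the argument in \cite{BBM2017}), the one new feature being that $P_1$, $P_2$ and $\hat{\tau}_n$ are data-dependent. So the task reduces to showing that, on an event $E_n$ with $\mathrm{P}(E_n) \to 1$, these data-dependent ingredients converge to their population analogues fast enough that: (i) the synthetic signal $\|P_1 - P_2\|_F$ is asymptotically equivalent to $\|\text{Ed}_{z}(\Lambda) - \text{Ed}_{w}(\Delta)\|_F$; (ii) the entries of $P_1$ and $P_2$ eventually lie in a fixed compact subinterval of $(0,1)$; and (iii) the operative regime (I, II, or III), the limiting constants $\gamma^2$, $\tilde{\gamma}^2$, $c_1^2$, the finite index set $\mathcal{K}_0$, and the limiting means $a^{*}_{ij,l}$ for the synthetic model coincide asymptotically with those of the true model.

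First I would settle (i)--(iii). For (i), Theorem~\ref{thm: c3} in its $2$-step form (Theorem~\ref{thm: 2step}) controls $m^{-2}\|P_1 - \text{Ed}_{z}(\Lambda)\|_F^2$ and $m^{-2}\|P_2 - \text{Ed}_{w}(\Delta)\|_F^2$; under SNR-ER-ADAP and A1-ADAP this rate is $o_{\text{P}}\!\big(m^{-2}\|\text{Ed}_{z}(\Lambda) - \text{Ed}_{w}(\Delta)\|_F^2\big)$, so a triangle-inequality expansion gives $\|P_1 - P_2\|_F = (1+o_{\text{P}}(1))\,\|\text{Ed}_{z}(\Lambda) - \text{Ed}_{w}(\Delta)\|_F$. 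For (ii), combine (A2) with the entrywise deviation of $\hat{\hat{\Lambda}}$ from $\Lambda$ and of $\hat{\hat{\Delta}}$ from $\Delta$, which by the arguments behind Theorems~\ref{thm: cluster} and \ref{thm: c3} is $O_{\text{P}}$ of the sum of the misclassification rate and $(n^{-1}\tilde{C}_n^{-2}\log m)^{1/2}$; this is $o_{\text{P}}(1)$ under A1-ADAP and $\log m = o(\sqrt{n})$, so the synthetic connectivity entries fall in $(c/2,\,1-c/2)$ on $E_n$. For (iii), consistency of $\hat{\tau}_n$ (Theorem~\ref{lem: b1} via Theorem~\ref{thm: 2step}) together with (A7) gives $\hat{\tau}_n \stackrel{\mathrm{P}}{\to} \tau^{*}$; the finiteness of $\mathcal{K}_0$ together with (A6)--(A7) and consistency of $\hat{z},\hat{w},\hat{\hat{\Lambda}},\hat{\hat{\Delta}}$ forces $\mathcal{K}_0$ to be eventually correctly identified with $\hat{\hat{\lambda}}_{\hat{z}(i),\hat{z}(j)} \to a^{*}_{ij,1}$ and $\hat{\hat{\delta}}_{\hat{w}(i),\hat{w}(j)} \to a^{*}_{ij,2}$ on $\mathcal{K}_0$; and since under (A3)--(A5) the constants $\gamma^2,\tilde{\gamma}^2,c_1^2$ are continuous functionals of the connectivity entries and of $\tau^{*}$, they transfer to the synthetic model with unchanged limits.

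Granting (i)--(iii) on $E_n$, I would then run the localised-process argument of Theorem~\ref{lem: b2lse}, conditionally on $\mathcal{D}_n$, for the synthetic DSBM. In Regime I the triangular drift of the centred criterion diverges, pinning the conditional argmax at $0$ and giving (a). In Regime II, after rescaling by $\|P_1 - P_2\|_F^2$, a Donsker-type functional CLT for the partial-sum process of the increments of $\tilde{L}^{*}$, together with the continuous mapping theorem, the a.s.\ uniqueness of the argmax, and the no-escape property supplied by the negative drift, yields the limit $\gamma^{-2}\arg \max_{h\in\mathbb{R}}(-0.5|h| + B_h)$ of (b). In Regime III, split the increments over $\mathcal{K}_n$ (treated as in Regime II, contributing $C(h) + D(h)$) and over the finite set $\mathcal{K}_0$ (a finite-dimensional CLT for the finitely many Bernoulli partial sums, contributing $A(h)$), then take the argmax over $h\in\mathbb{Z}$ to obtain (c). Finally, because in each regime the conditional limit law is deterministic and $\mathrm{P}(E_n)\to 1$, the standard fact that conditional weak convergence to a non-random limit on asymptotically sure events implies unconditional weak convergence upgrades these to the stated unconditional conclusions.

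The hard part will be items (ii) and (iii): one must ensure the plug-in errors coming from the clustering step neither distort the synthetic signal nor, in Regime III, corrupt the identity of $\mathcal{K}_0$ and its limiting Bernoulli means. This is exactly what forces the strengthened identifiability SNR-ER-ADAP (so the estimation error remains a lower-order correction even after the extra $\sqrt{\log m}$ factor that enters the maximal inequalities used to control the argmax), the strengthened misclassification condition A1-ADAP, and the density requirement $\log m = o(\sqrt{n})$, which keeps the synthetic connectivity bounded away from $0$ and $1$ and thereby underpins positivity of $\gamma^2,\tilde{\gamma}^2$ and the validity of the Donsker step. A secondary, routine point is the uniform-in-$h$ control of the remainder in the localised expansion of $\tilde{L}^{*}$ over a growing range of $h$, handled by the same chaining and maximal-inequality estimates as in the proof of Theorem~\ref{lem: b2lse}.
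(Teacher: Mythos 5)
Your proposal is correct and follows essentially the same route as the paper: condition on the data so the synthetic sequence is a known-parameter change-point model, transfer the plug-in signal $\|\text{Ed}_{\hat{z}}(\hat{\hat{\Lambda}})-\text{Ed}_{\hat{w}}(\hat{\hat{\Delta}})\|_F^2$ and the variance constants $\gamma^2,\tilde{\gamma}^2$ to their population values via the consistency results (this is exactly Lemma \ref{lem: adaplem}, proved from Theorem \ref{thm: c3} under SNR-ER-ADAP, A1-ADAP and $\log m = o(\sqrt{n})$), and then rerun the localized-process/Lyapunov-CLT/argmax-continuous-mapping argument of Theorem \ref{lem: b2lse}. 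Your items (i)--(iii) are precisely the content of the paper's auxiliary lemma, so no substantive gap remains.
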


\noindent The proof of the Theorem is given in Section \ref{subsec: adapdsbm}.
  
\begin{remark} \label{rem: adap_prob}
Though SNR-ER-ADAP is stronger than SNR-ER, we have not been able to relax it. 
Data-driven methods usually require stronger assumptions even in simple models like the Erd\H{o}s-R\'{e}nyi random graph one,
with a single change-point---see for example \cite{BBM2017}.  To establish the form of the asymptotic distribution of $\hat{h}_{\text{DSBM}}$, we need to establish convergence in probability as stated in Lemma \ref{lem: adaplem} which do not occur without SNR-ER-ADAP.  
\end{remark}

\begin{remark} \label{rem: adapcompare} Similar conclusions as in Theorem \ref{thm: adapdsbm} hold for $\tilde{\tilde{\tau}}_{m,n}$ and $\tau_{m,n}^*$, but under the stronger assumptions (A1) and (A1*), respectively, instead of A1-ADAP. Details are omitted to avoid repetition. Moreover, as discussed in Sections \ref{sec: DSBM} and \ref{sec: compare},  (A1) and (A1*) are difficult to satisfy whereas SNR-ER-ADAP may often hold (see discussion after stating the assumption). Also computation of $\tilde{\tilde{\tau}}_{m,n}$ is itself  expensive and performing adaptive inference for it will make it even more so. 
For these reasons,  we have mainly focused on adaptive inference for $\hat{\tau}_{m,n}$. 
\end{remark}  
  
\begin{remark} \label{rem: adapdiscuss} Note that the asymptotic distribution of $\hat{h}_{\text{DSBM}}$ is  identical to the asymptotic distribution of ${\hat{\tau}}_{m,n}$. Therefore, in practice we can simulate $\hat{h}_{\text{DSBM}}$  for a large number of replicates and use their empirical quantiles as estimates of the quantiles of the limiting distribution under the (unknown) true regime. 
Moreover, the adaptive inference is a computationally expensive procedure and  comes at a certain cost, namely the  stronger assumption SNR-ER-ADAP. 
\end{remark}

\section{Concluding Remarks} \label{sec:concluding-remarks}

In this paper, we have addressed the change-point problem in the context of DSBM. We establish consistency of the change-point estimator under a suitable identifiability condition and a second condition that controls the misclassification rate arising from using clustering for assigning nodes to communities and discuss the stringency of the latter condition. Further, we propose a fast computational strategy that ignores the underlying community structure but provides a consistent estimate of the change-point. Further, for both methods under their respective identifiability and certain additional regularity conditions,  we establish rates of convergence and derive the asymptotic distributions of the change-point estimators.

In addition, this work identifies an interesting issue that requires further research; namely, a range of models where the SNR-DSBM identifiability condition holds, but the misclassification rate condition (A1) needed for the ``every time point clustering algorithm" and the identifiability condition (SNR-ER) of the alternative strategy fails to hold. In that range, no general strategy for solving the change-point problem for DSBM seems to be currently available.

Note that even adopting a very simple (almost toy-like) structure for the DSBM (for example the framework in \cite{GMZZ2015}), detecting the change-point remains a hard problem if neither (A1) or (SNR-ER) hold. The modified algorithm proposed in Remark \ref{rem: nc} is a good possibility, but the limitations discussed in Remark \ref{rem: limitation-nc} remain a concern.  At present, we are not aware of any detection algorithm that works by imposing only SNR-DSBM type assumptions for the change-point problem in DSBM.

\vskip 5pt
\noindent \textit{\textbf{Acknowledgments}}. We are thankful to Dr. Daniel Sussman for valuable comments and suggestions. We also thank the two anonymous referees for many helpful comments and suggestions that led to improvements in the structure of the paper and presentation of the material.

The work of Monika Bhattacharjee was entirely supported by a fellowship of Informatics Institute, University of Florida, USA. Currently, Monika Bhattacharjee is at the Department of Mathematics, Indian Institute of Technology Bombay.

The work of Moulinath Banerjee was supported  by NSF DMS-1712962.

The work of George Michailidis was supported in part by NSF grants DMS 1830175, IIS 1632730 and DMS 1821220.


\section{Proofs and Other Technical Material} \label{sec: proofs}
Throughout this section, $C$ is a generic  positive constant. Often we write $\tau_{m,n}$ as $\tau$. 

\subsection{Proof of Theorem \ref{thm: mismiscluster}} \label{subsec: mismiscluster}
Without loss of generality, assume $\tau<b$. By Lemmas $5.1$ and $5.3$ of \cite{LR2015}, with probability tending to $1$, we have
\begin{eqnarray}
\mathcal{M}_{b,n,m} &\leq & C\frac{K}{n\nu_{m,n}^2} \bigg[ ||\frac{1}{n}\sum_{t=1}^{n\tau}(A_{t,n} - \text{Ed}_{z}(\Lambda)) ||_F^2  +  ||\frac{1}{n}\sum_{t=n\tau +1}^{nb}(A_{t,n} -\text{Ed}_{w}(\Delta)) ||_F^2 \nonumber \\
&& \hspace{2cm}+ ||\frac{1}{n}\sum_{t=n\tau +1}^{nb} (\text{Ed}_{z}(\Lambda)-\text{Ed}_{w}(\Delta))||_F^2 \bigg] \nonumber \\
&=& C\frac{K}{n\nu_{m,n}^2} (A_1+A_2 + |\tau-b|\ ||\text{Ed}_{z}(\Lambda)-\text{Ed}_{w}(\Delta)||_F^2),\ \text{say}. \nonumber
\end{eqnarray}
Now by Theorem $5.2$ of \cite{LR2015},  $A_1, A_2 = O_{\text{P}}(m)$.  Thus,
\begin{eqnarray}
\mathcal{M}_{b,n,m} = O_{\text{P}}\left(\frac{K}{n\nu_{m,n}^2} ( m + |\tau-b|||\text{Ed}_{z}(\Lambda)-\text{Ed}_{w}(\Delta)||_F^2)\right). \nonumber
\end{eqnarray}
This completes the proof of Theorem \ref{thm: mismiscluster}. $\blacksquare$

\subsection{Selected useful lemmas}

The following two lemmas directly quoted from \cite{Wellner1996empirical} are needed to establish Theorems \ref{lem: b1} and \ref{lem: b2lse}.
\begin{lemma} \label{lem: wvan1}
For each $n$, let $\mathbb{M}_n$ and $\tilde{\mathbb{M}}_n$ be stochastic processes indexed by a set $\mathcal{T}$. Let $\tau_n\ \text{(possibly random)} \in \mathcal{T}_n \subset \mathcal{T}$ and 
$d_n(b,\tau_n)$ be a map (possibly random) from $\mathcal{T}$ to $[0,\infty)$. Suppose that for every large $n$ and $\delta \in (0,\infty)$
\begin{eqnarray}
&& \sup_{\delta/2 < d_n(b,\tau_n) < \delta,\  b \in \mathcal{T}} (\tilde{\mathbb{M}}_n(b) - \tilde{\mathbb{M}}_n(\tau_n)) \leq -C\delta^2, \label{eqn: lemcon1} \\
&& E\sup_{\delta/2 < d_n(b,\tau_n) < \delta,\  b \in \mathcal{T}}  \sqrt{n} |\mathbb{M}_n(b) - \mathbb{M}_n(\tau_n) - (\tilde{\mathbb{M}}_n(b) - \tilde{\mathbb{M}}_n(\tau_n))| \leq  C\phi_{n}(\delta), \label{eqn: lemcon2}
\end{eqnarray}
for some $C>0$ and for function $\phi_n$ such that $\delta^{-\alpha}\phi_n(\delta)$ is decreasing in $\delta$ on $(0,\infty)$ for some $\alpha <2$. Let $r_n$ satisfy
\begin{eqnarray} \label{eqn: lemrn}
r_n^2  \phi(r_n^{-1}) \leq \sqrt{n}\ \ \text{for every  $n$}.
\end{eqnarray}
Further, suppose that the sequence $\{\hat{\tau}_{m,n}\}$ takes its values in $\mathcal{T}_n$ and 
satisfies $\mathbb{M}_n(\hat{\tau}_n) \geq \mathbb{M}_n(\tau_n) - O_P (r_n^{-2})$ for large enough $n$. Then,
$r_n d_{n}(\hat{\tau}_n,\tau_n) = O_P (1)$. 
\end{lemma}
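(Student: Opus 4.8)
The plan is to prove this exactly by the classical ``peeling'' (slicing) device used for $M$-estimation rates, as in \cite{Wellner1996empirical}; note that, because the hypotheses are assumed for every $\delta\in(0,\infty)$ rather than only for small $\delta$, there is no need for a preliminary consistency statement about $\hat\tau_n$. First I would fix $\eta>0$ and use the hypothesis $\mathbb{M}_n(\hat\tau_n)\ge\mathbb{M}_n(\tau_n)-O_P(r_n^{-2})$ to extract, for all large $n$, an event $E_n$ with $\text{P}(E_n)\ge 1-\eta$ on which $\mathbb{M}_n(\hat\tau_n)\ge\mathbb{M}_n(\tau_n)-Kr_n^{-2}$ for some constant $K=K_\eta$. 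It then suffices to bound $\text{P}(\{r_n d_n(\hat\tau_n,\tau_n)>2^{M}\}\cap E_n)$ and show it can be made arbitrarily small, uniformly in $n$, by choosing $M$ large.

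The core step is to decompose $\{r_n d_n(\hat\tau_n,\tau_n)>2^{M}\}$ into the shells $S_{j,n}=\{b\in\mathcal{T}:\ 2^{j-1}<r_n d_n(b,\tau_n)\le 2^{j}\}$, $j\ge M$, and to estimate $\text{P}(\hat\tau_n\in S_{j,n}\cap E_n)$ for each $j$. Writing $\mathbb{W}_n=\mathbb{M}_n-\tilde{\mathbb{M}}_n$, on $\{\hat\tau_n\in S_{j,n}\}\cap E_n$ the near-maximality of $\hat\tau_n$ combined with the curvature bound (\ref{eqn: lemcon1}) applied with $\delta=2^{j}/r_n$ forces $\sup_{b\in S_{j,n}}\big(\mathbb{W}_n(b)-\mathbb{W}_n(\tau_n)\big)\ge C\,4^{j}r_n^{-2}-Kr_n^{-2}$, which is at least $\tfrac{C}{2}\,4^{j}r_n^{-2}$ once $M$ is chosen with $C\,4^{M}\ge 2K$. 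Then I would apply Markov's inequality together with the modulus bound (\ref{eqn: lemcon2}), bound $\phi_n(2^{j}/r_n)$ by $2^{j\alpha}\phi_n(1/r_n)$ using the assumed monotonicity of $\delta\mapsto\delta^{-\alpha}\phi_n(\delta)$, and finally invoke the defining inequality $r_n^{2}\phi_n(1/r_n)\le\sqrt{n}$ from (\ref{eqn: lemrn}). This yields
\[
\text{P}\big(\hat\tau_n\in S_{j,n}\cap E_n\big)\ \le\ \frac{C\,\phi_n(2^{j}/r_n)}{\tfrac{C}{2}\sqrt{n}\,4^{j}r_n^{-2}}\ \le\ \frac{2\,r_n^{2}\,2^{j\alpha}\phi_n(1/r_n)}{\sqrt{n}\,4^{j}}\ \le\ 2\cdot 2^{j(\alpha-2)} .
\]
Summing over $j\ge M$, which converges since $\alpha<2$, bounds $\text{P}(r_n d_n(\hat\tau_n,\tau_n)>2^{M},\,E_n)$ by a constant times $2^{M(\alpha-2)}$; adding $\text{P}(E_n^c)\le\eta$ and letting first $M\to\infty$ and then $\eta\downarrow 0$ gives $r_n d_n(\hat\tau_n,\tau_n)=O_P(1)$.

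I do not anticipate a genuine obstacle, since this is a result quoted verbatim; the only points requiring care are bookkeeping ones. One must keep all constants ($C$, $\alpha$, and the multiplicative factors above) independent of $n$, because the hypotheses only hold ``for every large $n$''; one must allow the distance $d_n$ and the target $\tau_n$ to be random, so that the shells and the union over $j\ge M$ are read pathwise and the geometric tail still controls the (possibly unbounded) range of $d_n$; and one must first convert the qualitative bound $\mathbb{M}_n(\hat\tau_n)\ge\mathbb{M}_n(\tau_n)-O_P(r_n^{-2})$ into the quantitative event $E_n$ with an explicit constant $K_\eta$, and only then fix $M$ subject to $C\,4^{M}\ge 2K_\eta$. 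In the paper itself this lemma may simply be cited from \cite{Wellner1996empirical}; the sketch above records the argument for completeness.
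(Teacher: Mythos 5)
Your peeling argument is correct and is precisely the standard proof of this rate theorem (Theorem 3.2.5 / 3.4.1 in van der Vaart and Wellner); the paper itself offers no proof, stating that the lemma is quoted directly from \cite{Wellner1996empirical}. The shell decomposition, the use of the curvature bound to force a large fluctuation of $\mathbb{M}_n-\tilde{\mathbb{M}}_n$, the Markov/modulus step with the monotonicity of $\delta^{-\alpha}\phi_n(\delta)$, and the geometric summation all match the cited source, so there is nothing to add beyond the minor bookkeeping points you already flag.
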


\begin{lemma}  \label{lem: wvandis1}
Let $\mathbb{M}_n$ and $\mathbb{M}$ be two stochastic processes indexed by a metric space $\mathcal{T}$,
such that $\mathbb{M}_n \Rightarrow \mathbb{M}$ in $l^{\infty}(\mathcal{C})$ for every compact set $\mathcal{C} \subset \mathcal{T}$, that is,
\begin{align}
\sup_{h \in \mathcal{C}} |\mathbb{M}_n(h) - \mathbb{M}(h)| \stackrel{P}{\to} 0.
\end{align} 
Suppose that almost all sample paths $h \to \mathbb{M}(h)$ are upper semi-continuous and possess a unique maximum at a (random) point $\hat{h}$, which as a random map in $\mathcal{T}$ is tight. If the sequence $\hat{h}_n$ is uniformly tight and satisfies $\mathbb{M}_n(\hat{h}_n) \geq \sup_{n} \mathbb{M}_n(h) - o_{P}(1)$, then $\hat{h}_n \stackrel{\mathcal{D}}{\to} \hat{h}$ in $\mathcal{T}$.
\end{lemma}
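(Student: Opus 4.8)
This is the classical argmax continuous mapping theorem, and the plan is to (i) use the two tightness hypotheses to push the whole problem onto a single compact parameter set $K$, (ii) on $K$, replace the (discontinuous, possibly non‑attained) argmax functional by the pair of \emph{supremum} functionals, which are $1$‑Lipschitz on $l^\infty(K)$ and hence amenable to the ordinary continuous mapping theorem, and (iii) cash this in through the portmanteau characterisation of $\stackrel{\mathcal{D}}{\to}$, using upper semicontinuity of $\mathbb{M}$ and uniqueness of its maximiser. Concretely I would prove $\limsup_n P^*(\hat h_n\in F)\le P(\hat h\in F)$ for every closed $F\subset\mathcal{T}$. For the localization step: fix $\epsilon>0$; uniform tightness of $\{\hat h_n\}$ gives a compact $K_0$ with $\sup_n P^*(\hat h_n\notin K_0)<\epsilon$, and tightness of $\hat h$ gives a compact $K_1$ with $P(\hat h\notin K_1)<\epsilon$, so set $K=K_0\cup K_1$. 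Since $\mathbb{M}$ is upper semicontinuous and $K$ is compact, $\sup_{h\in A}\mathbb{M}(h)$ is attained for every closed $A\subseteq K$, and uniqueness of the global maximiser forces $\arg\max_{h\in K}\mathbb{M}(h)=\hat h$ on the event $\{\hat h\in K\}$.

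For the transfer step, write $\xi_n:=\sup_{h\in\mathcal{T}}\mathbb{M}_n(h)-\mathbb{M}_n(\hat h_n)=o_{\text{P}}(1)$. For closed $F$, on $\{\hat h_n\in F\cap K\}$, and using $\sup_{h\in K}\mathbb{M}_n(h)\le\sup_{h\in\mathcal{T}}\mathbb{M}_n(h)$,
\[
\sup_{h\in F\cap K}\mathbb{M}_n(h)\ \ge\ \mathbb{M}_n(\hat h_n)\ =\ \sup_{h\in\mathcal{T}}\mathbb{M}_n(h)-\xi_n\ \ge\ \sup_{h\in K}\mathbb{M}_n(h)-\xi_n .
\]
The map $f\mapsto\big(\sup_{F\cap K}f,\ \sup_{K}f\big)$ is $1$‑Lipschitz from $l^\infty(K)$ to $\mathbb{R}^2$, so $\mathbb{M}_n\Rightarrow\mathbb{M}$ in $l^\infty(K)$, the continuous mapping theorem, and Slutsky (since $\xi_n\stackrel{P}{\to}0$) give
\[
Y_n:=\sup_{h\in F\cap K}\mathbb{M}_n(h)-\sup_{h\in K}\mathbb{M}_n(h)+\xi_n\ \stackrel{\mathcal{D}}{\to}\ Y:=\sup_{h\in F\cap K}\mathbb{M}(h)-\sup_{h\in K}\mathbb{M}(h)\ \le\ 0 .
\]
Since $\{\hat h_n\in F\cap K\}\subseteq\{Y_n\ge0\}$ and $\{y\ge0\}$ is closed, portmanteau yields $\limsup_n P^*(\hat h_n\in F\cap K)\le P(Y\ge0)=P(Y=0)$.

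To conclude, note $\{Y=0\}$ is exactly the event that the maximum of $\mathbb{M}$ over $K$ is attained somewhere in $F\cap K$; on $\{\hat h\in K\}$ that maximum equals $\mathbb{M}(\hat h)$, so uniqueness of the maximiser gives $\{Y=0\}\subseteq\{\hat h\in F\}\cup\{\hat h\notin K\}$ and hence $P(Y=0)\le P(\hat h\in F)+\epsilon$. Combining this with Step 1's bound $P^*(\hat h_n\notin K)<\epsilon$,
\[
\limsup_n P^*(\hat h_n\in F)\ \le\ \limsup_n P^*(\hat h_n\in F\cap K)+\epsilon\ \le\ P(\hat h\in F)+2\epsilon ,
\]
and letting $\epsilon\downarrow0$ establishes the portmanteau inequality for all closed $F$, i.e. $\hat h_n\stackrel{\mathcal{D}}{\to}\hat h$.

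\emph{Main obstacle.} The delicate point is the mismatch between the near‑maximum being taken over all of $\mathcal{T}$ while the analysis runs on a fixed compact $K$: this is precisely why the hypothesis must be \emph{uniform} tightness of $\{\hat h_n\}$ (so a single $K$ serves every $n$), and why it is enough that $\sup_{\mathcal{T}}\mathbb{M}_n\ge\sup_{K}\mathbb{M}_n$ holds trivially. A secondary nuisance is measurability — $\hat h_n$ need not be a measurable function of $\mathbb{M}_n$, and suprema over uncountable index sets are measurable only up to outer‑probability bookkeeping — which is handled, as in the cited reference, by working throughout with outer probabilities and the outer‑probability form of portmanteau, using that $K$ is a compact (hence separable) metric space. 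An alternative route I could take in place of the transfer/conclusion steps is the almost‑sure representation theorem: realize $\mathbb{M}_n\to\mathbb{M}$ almost surely in $l^\infty(K)$ on a common probability space and then invoke the deterministic fact that, for a sequence of functions converging uniformly on $K$, whose near‑maximisers lie in $K$ and whose limit is upper semicontinuous with a unique maximiser, the near‑maximisers converge to that maximiser; this trades the portmanteau bookkeeping for a Skorohod construction that must additionally carry the slack sequence $\xi_n$.
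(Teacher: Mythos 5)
Your argument is correct, but note that the paper does not actually prove this lemma: it is quoted verbatim from van der Vaart and Wellner (1996) (their argmax continuous mapping theorem, Theorem 3.2.2), so there is no in-paper proof to compare against. What you have written is essentially the standard textbook proof: localize to a single compact $K$ via the two tightness hypotheses, bound $\{\hat h_n\in F\cap K\}$ by $\{Y_n\ge 0\}$ with $Y_n=\sup_{F\cap K}\mathbb{M}_n-\sup_{K}\mathbb{M}_n+\xi_n$, pass to the limit through the $1$-Lipschitz supremum functionals and portmanteau, and use upper semicontinuity plus uniqueness of the maximizer to identify $\{Y=0\}$ with $\{\hat h\in F\}$ up to the $\epsilon$-exceptional set $\{\hat h\notin K\}$. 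The chain $\sup_{F\cap K}\mathbb{M}_n\ge\mathbb{M}_n(\hat h_n)=\sup_{\mathcal T}\mathbb{M}_n-\xi_n\ge\sup_K\mathbb{M}_n-\xi_n$ is exactly the right way to reconcile the near-maximality over all of $\mathcal T$ with the analysis on $K$, and your remarks on outer probability are the appropriate level of care. Two trivial points of bookkeeping: handle $F\cap K=\emptyset$ separately (there $Y_n$ is not a real-valued variable, but the event $\{\hat h_n\in F\cap K\}$ is empty so nothing is needed); and you correctly read the hypothesis $\mathbb{M}_n(\hat h_n)\ge\sup_{n}\mathbb{M}_n(h)-o_P(1)$ as a typo for $\sup_{h}\mathbb{M}_n(h)$. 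Since the convergence hypothesis here is actually uniform convergence in probability on compacts (stronger than weak convergence in $l^\infty(\mathcal C)$), the continuous-mapping step could be replaced by a direct convergence-in-probability argument, but invoking the CMT as you do is equally valid.
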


The following lemma is needed in the proof of Theorem \ref{thm: adapdsbm}.

\begin{lemma} \label{lem: adaplem}
Suppose SNR-ER-ADAP, A1-ADAP holds  and $\log m = o(\sqrt{n})$. Then, the following statements hold. 
\vskip 2pt 
\noindent(a)  $\frac{||\text{Ed}_{\hat{z}}(\hat{\hat{\Lambda}})-\text{Ed}_{\hat{w}}(\hat{\hat{\Delta}}) ||_F^2}{||\text{Ed}_{z}(\Lambda)-\text{Ed}_{w}(\Delta)  ||_F^2} \stackrel{\text{P}}{\to} 1$.
\vskip 2pt
\noindent (b) If $||\text{Ed}_{z}(\Lambda)-\text{Ed}_{w}(\Delta)  ||_F^2 \to 0$, then
\begin{eqnarray}
\frac{\sum_{i,j=1}^{m}(\lambda_{z(i)z(j)}-\delta_{w(i)w(j)})^2 \lambda_{z(i)z(j)}(1-\lambda_{z(i)z(j)})}{||\text{Ed}_{z}(\Lambda)-\text{Ed}_{w}(\Delta)  ||_F^2}  & \stackrel{\text{P}}{\to} & \gamma^2, \nonumber \\
\frac{\sum_{i,j=1}^{m}(\lambda_{z(i)z(j)}-\delta_{w(i)w(j)})^2 \delta_{w(i)w(j)}(1-\delta_{w(i)w(j)})}{||\text{Ed}_{z}(\Lambda)-\text{Ed}_{w}(\Delta)  ||_F^2} \stackrel{\text{P}}{\to} \gamma^2. \nonumber
\end{eqnarray}
\vskip 2pt
\noindent (c) If $||\text{Ed}_{z}(\Lambda)-\text{Ed}_{w}(\Delta)  ||_F^2 \to c^2>0$, then
\begin{eqnarray}
\sum_{i,j\in \mathcal{K}_n}(\lambda_{z(i)z(j)}-\delta_{w(i)w(j)})^2 \lambda_{z(i)z(j)}(1-\lambda_{z(i)z(j)}) \stackrel{\text{P}}{\to} \tilde{\gamma}^2, \nonumber \\
\sum_{i,j\in \mathcal{K}_n}(\lambda_{z(i)z(j)}-\delta_{w(i)w(j)})^2 \delta_{w(i)w(j)}(1-\delta_{w(i)w(j)}) \stackrel{\text{P}}{\to} \tilde{\gamma}^2.\nonumber
\end{eqnarray} 
\end{lemma}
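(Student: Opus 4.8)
\textbf{Proof proposal for Lemma \ref{lem: adaplem}.}

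The plan is to treat the three parts sequentially, with part (a) doing most of the work and parts (b)--(c) following by the same circle of ideas plus elementary bookkeeping. For part (a), the strategy is to show that $\text{Ed}_{\hat z}(\hat{\hat\Lambda})$ and $\text{Ed}_{\hat w}(\hat{\hat\Delta})$ are uniformly close to $\text{Ed}_z(\Lambda)$ and $\text{Ed}_w(\Delta)$ in a normalized Frobenius sense, and then deduce the ratio convergence by a triangle-inequality / reverse-triangle-inequality squeeze. Concretely, I would write
\begin{eqnarray}
\big| \|\text{Ed}_{\hat z}(\hat{\hat\Lambda})-\text{Ed}_{\hat w}(\hat{\hat\Delta})\|_F - \|\text{Ed}_z(\Lambda)-\text{Ed}_w(\Delta)\|_F \big|
\le \|\text{Ed}_{\hat z}(\hat{\hat\Lambda})-\text{Ed}_z(\Lambda)\|_F + \|\text{Ed}_{\hat w}(\hat{\hat\Delta})-\text{Ed}_w(\Delta)\|_F, \nonumber
\end{eqnarray}
so it suffices to prove that each of the two terms on the right, after dividing by $\|\text{Ed}_z(\Lambda)-\text{Ed}_w(\Delta)\|_F$, tends to $0$ in probability. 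Here I would invoke Theorem \ref{thm: c3} (and Theorem \ref{thm: 2step}, which transfers those conclusions to $\hat z,\hat w,\hat{\hat\Lambda},\hat{\hat\Delta}$), giving
$m^{-2}\|\text{Ed}_{\hat z}(\hat{\hat\Lambda})-\text{Ed}_z(\Lambda)\|_F^2 = O_{\text P}\!\big( (Km/(n\nu_m^2))^2 + I(n>1)/(n^2\|\text{Ed}_z(\Lambda)-\text{Ed}_w(\Delta)\|_F^4) + \log m/(n\tilde C_n^2)\big)$, and similarly for the $\Delta$-term. Multiplying through by $m^2$ and dividing by $\|\text{Ed}_z(\Lambda)-\text{Ed}_w(\Delta)\|_F^2$, the middle term is $O_{\text P}(m^2/(n^2\|\cdot\|_F^4))$, which is $o(1)$ precisely by SNR-ER (hence a fortiori by SNR-ER-ADAP); the first and third terms produce $m^2(Km/(n\nu_m^2))^2/\|\cdot\|_F^2$ and $m^2\log m/(n\tilde C_n^2\|\cdot\|_F^2)$, which I would bound using SNR-ER-ADAP together with A1-ADAP and $\log m = o(\sqrt n)$. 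The cleanest route is: A1-ADAP says $Km/(n\nu_m^2)\cdot\|\cdot\|_2^{-1}\to 0$, and since $\|\cdot\|_F\le m\|\cdot\|_2$ one controls the ratio of $F$-norm to $2$-norm crudely by $m$; combining with SNR-ER-ADAP's lower bound on $\|\cdot\|_F^2/(m^2\sqrt{\log m}/\sqrt n)$ closes the estimate. I expect the arithmetic of juggling these three exponents to be the most error-prone part, but it is routine once one commits to the inequalities $\|X\|_F \le \sqrt{\text{rank}(X)}\,\|X\|_2 \le m\|X\|_2$ and $\|X\|_2\le\|X\|_F$.

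For part (b), under $\|\text{Ed}_z(\Lambda)-\text{Ed}_w(\Delta)\|_F^2\to 0$ every entry difference $\lambda_{z(i)z(j)}-\delta_{w(i)w(j)}$ tends to $0$, so by (A2) the factors $\lambda_{z(i)z(j)}(1-\lambda_{z(i)z(j)})$ and $\delta_{w(i)w(j)}(1-\delta_{w(i)w(j)})$ lie in a compact subinterval of $(0,\tfrac14]$ and, crucially, $\lambda_{z(i)z(j)}(1-\lambda_{z(i)z(j)}) - \delta_{w(i)w(j)}(1-\delta_{w(i)w(j)}) = O(|\lambda_{z(i)z(j)}-\delta_{w(i)w(j)}|)$ uniformly. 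The idea is then to write the first displayed ratio in (b) as a weighted average, with weights $(\lambda_{z(i)z(j)}-\delta_{w(i)w(j)})^2/\|\text{Ed}_z(\Lambda)-\text{Ed}_w(\Delta)\|_F^2$ summing to $1$, of the quantities $\lambda_{z(i)z(j)}(1-\lambda_{z(i)z(j)})$; assumption (A3) is exactly what guarantees this weighted average has a limit, which is named $\gamma^2$ in Theorem \ref{lem: b2lse}(b). The second ratio differs from the first by $\sum (\lambda-\delta)^2\cdot O(|\lambda-\delta|)/\|\cdot\|_F^2$, which is bounded by $\max_{i,j}|\lambda_{z(i)z(j)}-\delta_{w(i)w(j)}|\to 0$, so it has the same limit $\gamma^2$. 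Here the ``$\stackrel{\text P}{\to}$'' is really deterministic convergence of a sequence of real numbers depending on $n$ through the (possibly $n$-dependent) model parameters — I would remark that the probabilistic symbol is used only because the results are later applied with the estimated parameters plugged in; the honest statement is that the left sides are $\gamma^2 + o(1)$.

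Part (c) is the Regime III analogue and is handled identically but with the sum restricted to $\mathcal{K}_n$, the set of edges along which $|\lambda_{z(i)z(j)}-\delta_{w(i)w(j)}|\to 0$; by definition of $\mathcal{K}_0=\mathcal{K}_n^c$ being a fixed finite set (assumption (A6)) the restricted sum $\sum_{(i,j)\in\mathcal{K}_n}(\lambda_{z(i)z(j)}-\delta_{w(i)w(j)})^2$ still carries the full divergent signal up to an $O(1)$ correction, and assumption (A4) plays the role (A3) did in (b), yielding the limit $\tilde\gamma^2$. The difference-of-the-two-ratios argument is verbatim as in (b): the $\Lambda$-version and the $\Delta$-version differ by at most $\max_{(i,j)\in\mathcal{K}_n}|\lambda_{z(i)z(j)}-\delta_{w(i)w(j)}|\to 0$ times a bounded quantity. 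The one genuine subtlety, and the step I would flag as the main obstacle, is not in (b) or (c) at all but in pinning down part (a): one must verify that the three error terms from Theorem \ref{thm: c3}, each divided by $\|\text{Ed}_z(\Lambda)-\text{Ed}_w(\Delta)\|_F^2$, are simultaneously $o_{\text P}(1)$ using only SNR-ER-ADAP, A1-ADAP and $\log m=o(\sqrt n)$ — the interplay of the $F$-norm versus $2$-norm in A1-ADAP with the $F$-norm appearing everywhere else requires care, and is where a reader would want the inequalities spelled out explicitly.
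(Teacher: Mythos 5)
Your proposal is correct and follows essentially the same route as the paper: part (a) is reduced by a triangle-inequality argument to showing that $\|\text{Ed}_{\hat z}(\hat{\hat\Lambda})-\text{Ed}_z(\Lambda)\|_F^2$ and $\|\text{Ed}_{\hat w}(\hat{\hat\Delta})-\text{Ed}_w(\Delta)\|_F^2$ are negligible relative to $\|\text{Ed}_z(\Lambda)-\text{Ed}_w(\Delta)\|_F^2$, which is then closed by Theorem \ref{thm: c3} together with SNR-ER-ADAP, A1-ADAP and $\log m=o(\sqrt n)$, while parts (b) and (c) are, as you observe, essentially restatements of the deterministic limits postulated in (A3) and (A4). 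Your reverse-triangle-inequality version on the unsquared norms is a cosmetic (and if anything slightly cleaner) variant of the paper's squared-norm bound, and the norm-bookkeeping you flag as delicate is indeed the only step the paper leaves implicit.
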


\begin{proof}
We only show the proof of part (a), since parts (b) and (c) follow employing similar arguments. 
\begin{eqnarray}
\bigg|\frac{||\text{Ed}_{\hat{z}}(\hat{\hat{\Lambda}})-\text{Ed}_{\hat{w}}(\hat{\hat{\Delta}}) ||_F^2}{||\text{Ed}_{z}(\Lambda)-\text{Ed}_{w}(\Delta)  ||_F^2}-1 \bigg| &=& \frac{|||\text{Ed}_{\hat{z}}(\hat{\hat{\Lambda}})-\text{Ed}_{\hat{w}}(\hat{\hat{\Delta}}) ||_F^2-||\text{Ed}_{z}(\Lambda)-\text{Ed}_{w}(\Delta)  ||_F^2|}{||\text{Ed}_{z}(\Lambda)-\text{Ed}_{w}(\Delta)  ||_F^2} \nonumber \\
& \leq & \frac{||\text{Ed}_{\hat{z}}(\hat{\hat{\Lambda}}) -\text{Ed}_{z}(\Lambda) - \text{Ed}_{\hat{w}}(\hat{\hat{\Delta}}) + \text{Ed}_{w}(\Delta) ||_F^2}{||\text{Ed}_{z}(\Lambda)-\text{Ed}_{w}(\Delta)  ||_F^2} \nonumber \\
& \leq & \frac{||\text{Ed}_{\hat{z}}(\hat{\hat{\Lambda}}) -\text{Ed}_{z}(\Lambda)||_F^2 +||\text{Ed}_{\hat{w}}(\hat{\hat{\Delta}}) - \text{Ed}_{w}(\Delta) ||_F^2 }{||\text{Ed}_{z}(\Lambda)-\text{Ed}_{w}(\Delta)  ||_F^2}\nonumber 
\end{eqnarray}
Therefore, part (a) follows from Theorem \ref{thm: c3}, SNR-ER-ADAP, A1-ADAP and $\log m = o(\sqrt{n})$. 
\end{proof}

\subsection{Proof of Theorem \ref{lem: b1}} \label{subsec: b1}
Throughout this proof, we use the following simplified notation for ease of exposition: $A_{ijt} = A_{ij,(t,n)}, z_1 = \tilde{z}_{b,n,m}, z_2 = \tilde{z}_{\tau_{m,n},n,m}, w_1 = \tilde{w}_{b,n,m}, w_2 = \tilde{w}_{\tau_{m,n},n,m}$, $\Lambda_1 = \tilde{\Lambda}_{\tilde{z}_{b,n,m},(b,n),m}$, $\Lambda_2 = \tilde{\Lambda}_{\tilde{z}_{\tau_{m,n},n,m},(\tau_{m,n},n),m}$, $\Lambda_3 = \tilde{\Lambda}_{\tilde{z}_{\tau_{m,n},n},(b,n),m}$, $\Delta_1 = \tilde{\Delta}_{\tilde{w}_{b,n,m},(b,n),m}$, $\Delta_2 = \tilde{\Delta}_{\tilde{w}_{\tau_{m,n},n,m},(\tau_n,n),m}$, $\Delta_w = \tilde{\Delta}_{w,(b,n),m}$, $\lambda_{uv,1} = \tilde{\lambda}_{uv,\tilde{z}_{b,n,m},(b,n),m}$, $\lambda_{uv,2} = \tilde{\lambda}_{uv,\tilde{z}_{\tau_{m,n},n,m},(\tau_{m,n},n),m}$,\\ $\lambda_{uv,3} = \tilde{\lambda}_{uv,\tilde{z}_{\tau_{m,n},n},(b,n),m}$,  $\delta_{uv,1} = \tilde{\delta}_{uv,\tilde{w}_{b,n,m},(b,n),m}$, $\delta_{uv,2} = \tilde{\delta}_{uv,\tilde{w}_{\tau_{m,n},n,m},(\tau_{m,n},n),m}$, $\delta_{uv,w} = \tilde{\delta}_{uv,w,(b,n),m}$.  Suppose $b<\tau_{m,n}$. Similar arguments work when $b>\tau_{m,n}$.  Note that 
\begin{eqnarray}
\tilde{\tilde{\tau}}_{m,n}  = \arg \min_{b \in (c^*,1-c^*)} \tilde{L}(b,z_1,w_1,\Lambda_1,\Delta_1)  \nonumber 
\end{eqnarray}
where
\begin{eqnarray}
\tilde{L} (b,z_1,w_1,\Lambda_1,\Delta_1)  = \frac{1}{n} \sum_{i,j=1}^{m} \bigg[  \sum_{t=1}^{nb}(A_{ijt}-{\lambda}_{z_1(i)z_1(j),1})^2  + \sum_{t=nb+1}^{n} (A_{ijt}-\hat{\delta}_{w_1(i) w_1(i),1})^2 \bigg].
\end{eqnarray}

\noindent To prove Theorem \ref{lem: b1}, we need  Lemma \ref{lem: wvan1}  quoted from \cite{Wellner1996empirical}.  For our purpose, we make use of the above lemma with $\mathbb{M}_n (\cdot) = \tilde{L} (\cdot,\tilde{z}_{\cdot,n,m},\tilde{w}_{\cdot,n,m},$\\ $\tilde{\Lambda}_{\tilde{z}_{\cdot,n,m},(\cdot,n),m},\tilde{\Delta}_{\tilde{w}_{\cdot,n,m},(\cdot,n),m})$, $\tilde{\mathbb{M}}_n(\cdot) = E\tilde{L} (\cdot,\tilde{z}_{\cdot,n,m},\tilde{w}_{\cdot,n,m},\tilde{\Lambda}_{\tilde{z}_{\cdot,n,m},(\cdot,n),m},\tilde{\Delta}_{\tilde{w}_{\cdot,n,m},(\cdot,n),m})$,
$\mathcal{T} = [0,1]$, $\mathcal{T}_n = \{1/n,2/n,\ldots, (n-1)/n,1\} \cap [c^{*},1-c^{*}]$,  $d_n(b,\tau_{m,n}) = ||\text{Ed}_{z}(\Lambda)-\text{Ed}_{w}(\Delta)||_F$\\ $\sqrt{|b - \tau_{m,n}|}$, $\phi_n(\delta) = \delta$, $\alpha = 1.5$, $r_n = \sqrt{n}(\rho_{m,n})^{-1/2}$ and $\hat{\tau}_{n} = \tilde{\tilde{\tau}}_{m,n}$.  Thus, to prove Theorem \ref{lem: b1}, it suffices to establish that for some $C>0$,
\begin{eqnarray}
&&  E(\mathbb{M}_n (b) - \mathbb{M}_n (\tau_{m,n})) \leq -C||\text{Ed}_{z}(\Lambda) - \text{Ed}_{w}(\Delta)||_F^2 |b - \tau_{m,n}|\ \ \text{and} \label{eqn: lsecon1} \\
&& E\sup_{\stackrel{\delta/2 < d_n(b,\tau_{m,n}) < \delta,} { b \in \mathcal{T}}}   |\mathbb{M}_n (b) - \mathbb{M}_n (\tau_{m,n}) - E(\mathbb{M}_n (b) - \mathbb{M}_n (\tau_{m,n}))| \leq  C\frac{\delta \sqrt{\rho_{m,n}}}{\sqrt{n}}. 
\label{eqn: lsecon2}
\end{eqnarray}
As the right side of (\ref{eqn: lsecon1}) and (\ref{eqn: lsecon2}) are independent of $z_1,z_2,w_1,w_2$, it suffices to show 
\begin{eqnarray}
&&  E^{*}(\mathbb{M}_n (b) - \mathbb{M}_n (\tau_{m,n})) \leq -C||\text{Ed}_{z}(\Lambda) - \text{Ed}_{w}(\Delta)||_F^2 |b - \tau_{m,n}|\ \ \text{and} \label{eqn: lsecon1f} \\
&& E^{*}\sup_{\stackrel{\delta/2 < d_n(b,\tau_{m,n}) < \delta,}{  b \in \mathcal{T}}}   |\mathbb{M}_n (b) - \mathbb{M}_n (\tau_{m,n}) - E(\mathbb{M}_n (b) - \mathbb{M}_n (\tau_{m,n}))| \leq  C\frac{\delta \sqrt{\rho_{m,n}}}{\sqrt{n}}. 
\label{eqn: lsecon2f}
\end{eqnarray}
where $E^{*}(\cdot) = E(\cdot| z_1,w_1,z_2,w_2)$.   Similarly denote $V^* = V(\cdot|z_1,z_2,w_1,w_2)$ and $\text{Cov}^{*}(\cdot) = \text{Cov}(\cdot|z_1,z_2,w_1,w_2)$. 
\vskip 2pt
Note that the left hand side of (\ref{eqn: lsecon2f}) is dominated by
\begin{eqnarray}
\left(E^{*}\sup_{\delta/2 < d_n(b,\tau_{m,n}) < \delta,\  b \in \mathcal{T}}   (\mathbb{M}_n (b) - \mathbb{M}_n (\tau_{m,n}) - E(\mathbb{M}_n (b) - \mathbb{M}_n (\tau_{m,n})))^2\right)^{1/2}. \label{eqn: doobprelse}
\end{eqnarray}
By Doob's martingale inequality, (\ref{eqn: doobprelse}) is further dominated by
\begin{eqnarray}
(\text{V}^{*}(\mathbb{M}_n (b) - \mathbb{M}_n (\tau_{m,n})))^{1/2}\ \ \text{where $d_n(b,\tau_{m,n}) = \delta$}.
\end{eqnarray}
Thus, to prove Theorem \ref{lem: b1}, it suffices to show that for some $C>0$,
\begin{eqnarray}
\text{V}^{*}(\mathbb{M}_n (b) - \mathbb{M}_n (\tau_{m,n})) \leq Cn^{-1} d_n^2(b,\tau_{m,n}) \rho_{m,n}. \label{eqn: lsecon2final}
\end{eqnarray}
Hence, it sufficies to prove (\ref{eqn: lsecon1f}) and (\ref{eqn: lsecon2final}) to establish Theorem \ref{lem: b1}. We shall prove these for $b<\tau_{m,n}$. Similar arguments work when $b \geq \tau_{m,n}$. 

Denote by $L_1 = \tilde{L} (b,z_1,w_1,\Lambda_1,\Delta_1)$ and $L_2 = \tilde{L} (\tau,z_2,w_2,\Lambda_2,\Delta_2)$. Hence,
\begin{eqnarray}
L_1 - L_2 = A(b) + B(b) + D(b), \label{eqn: abd1}
\end{eqnarray}
where
\begin{eqnarray}
A(b) &=& \frac{1}{n} \sum_{i,j=1}^{m} \sum_{t=1}^{nb} \bigg[(A_{ijt}-\lambda_{z_1(i)z_1(j),1})^2-(A_{ijt} -\lambda_{z_2(i)z_2(j),2})^2 \bigg], \nonumber \\
B(b) &=& \frac{1}{n} \sum_{i,j=1}^{m} \sum_{t=nb+1}^{n\tau} \bigg[(A_{ijt}-\delta_{w_1(i)w_1(j),1})^2 - (A_{ijt}-\delta_{z_2(i)z_2(j),2})^2  \bigg],\nonumber \\
D(b) &=&   \frac{1}{n} \sum_{i,j=1}^{m} \sum_{t=n\tau+1}^{n} \bigg[ (A_{ijt}-\delta_{w_1(i)w_1(j),1})^2-(A_{ijt}-\delta_{w_2(i)w_2(j),2})^2\bigg]. \nonumber 
\end{eqnarray}
Consider the first term of $A(b)$ as follows. 
\begin{eqnarray}
&& \frac{1}{nb}\sum_{t=1}^{nb}\sum_{i,j=1}^{m}(A_{ijt}-\lambda_{z_1(i)z_1(j),1})^2 \nonumber \\
&=&  \frac{1}{nb}\sum_{t=1}^{nb}\sum_{i,j=1}^{m} A_{ijt}^2  + \sum_{u,v=1}^{K} s_{u,z_1}s_{v,z_1}(\lambda_{uv,1})^2 - \sum_{u,v=1}^{K}s_{u,z_1}s_{v,z_1} (\lambda_{z_1(i)z_1(j),1})  \sum_{\stackrel{i: z_1(i)=u}{j: z_1(j)=v}} \sum_{t=1}^{nb} \frac{A_{ijt}}{nb} \nonumber \\
&=&  \frac{1}{nb}\sum_{t=1}^{nb}\sum_{i,j=1}^{m} A_{ijt}^2 - \sum_{u,v=1}^{K} s_{u,z_1}s_{v,z_1}(\lambda_{uv,1})^2.  \nonumber
\end{eqnarray}
Similarly, the second term of $A(b)$ is
\begin{eqnarray}
\frac{1}{nb}\sum_{t=1}^{nb} \sum_{i,j=1}^{m}(A_{ijt} -\lambda_{z_2(i)z_2(j),2})^2 
&=& \frac{1}{nb}\sum_{t=1}^{nb}\sum_{i,j=1}^{m} A_{ijt}^2 + \sum_{u,v=1}^{K} s_{u,z_2}s_{v,z_2}(\lambda_{uv,2})^2 \nonumber \\
&& \hspace{3 cm}- 2\sum_{u,v=1}^{K} s_{u,z_2}s_{v,z_2}\lambda_{uv,2}\lambda_{uv,3}. \nonumber 
\end{eqnarray}
\noindent Therefore, 
\begin{eqnarray}
bE^* (A(b)) &=& - \sum_{u,v=1}^{K} s_{u,z_2}s_{v,z_2}E^* (\lambda_{uv,2})^2 - b \sum_{u,v=1}^{K} s_{u,z_1}s_{v,z_1}E^* (\lambda_{uv,1})^2 \nonumber \\
&& \hspace{3 cm}+ 2\sum_{u,v=1}^{K} s_{u,z_2}s_{v,z_2}E^{*}(\lambda_{uv,2}\lambda_{uv,3}). \nonumber 
\end{eqnarray}
Let $S((u,v,f),(a,b,g))$ be the total number of edges which connect communities $u$ and $v$ under community structure $f$,  and also communities
$a$ and $b$ under community structure $g$. Therefore,
\begin{eqnarray}
E^* (\lambda_{uv,1})^2 &=& V^{*} (\lambda_{uv,1}) + (E(\lambda_{uv,1}))^2 \nonumber \\
&=& \frac{1}{nb} \frac{1}{(s_{u,z_1}s_{v,z_1})^2} \sum_{a,b=1}^{K} S((u,v,z_1),(a,b,z))\lambda_{ab}(1-\lambda_{ab}) \nonumber \\
&&\hspace{1 cm} + \left(\frac{1}{s_{u,z_1}s_{v,z_1}} \sum_{a,b=1}^{K} S((u,v,z_1),(a,b,z))\lambda_{ab} \right)^2, \nonumber \\
E^* (\lambda_{uv,2})^2 &=& V^{*} (\lambda_{uv,2}) + (E(\lambda_{uv,2}))^2 \label{eqn: e2} \\
&=& \frac{1}{n\tau} \frac{1}{(s_{u,z_2}s_{v,z_2})^2} \sum_{a,b=1}^{K} S((u,v,z_2),(a,b,z))\lambda_{ab}(1-\lambda_{ab}) \nonumber \\
&&\hspace{1 cm} + \left(\frac{1}{s_{u,z_2}s_{v,z_2}} \sum_{a,b=1}^{K} S((u,v,z_2),(a,b,z))\lambda_{ab} \right)^2, \nonumber \\
E^* (\lambda_{uv,2}\lambda_{uv,3}) &=& \text{Cov}^* (\lambda_{uv,2},\lambda_{uv,3}) + (E(\lambda_{uv,2}))(E(\lambda_{uv,3})) \nonumber \\
&=& \frac{1}{n\tau} \frac{1}{(s_{u,z_2}s_{v,z_2})^2} \sum_{a,b=1}^{K} S((u,v,z_2),(a,b,z))\lambda_{ab}(1-\lambda_{ab}) \nonumber \\
&&\hspace{1 cm} + \left(\frac{1}{s_{u,z_2}s_{v,z_2}} \sum_{a,b=1}^{K} S((u,v,z_2),(a,b,z))\lambda_{ab} \right)^2. \nonumber 
\end{eqnarray}
Hence,
\begin{eqnarray}
bE^*(A(b)) &=& b(A_1(b) + A_2(b)) \nonumber 
\end{eqnarray}
where
\begin{eqnarray}
A_1(b) &=& -\sum_{u,v=1}^{K} \frac{1}{nb} \frac{1}{s_{u,z_1}s_{v,z_1}} \sum_{a,b=1}^{K} S((u,v,z_1),(a,b,z))\lambda_{ab}(1-\lambda_{ab}) \nonumber \\
&& + \sum_{u,v=1}^{K} \frac{1}{n\tau} \frac{1}{s_{u,z_2}s_{v,z_2}} \sum_{a,b=1}^{K} S((u,v,z_2),(a,b,z))\lambda_{ab}(1-\lambda_{ab}), \nonumber \\
A_2(b) &=& -\sum_{u,v=1}^{K} \frac{1}{s_{u,z_1}s_{v,z_1}} \left(\sum_{a,b=1}^{K} S((u,v,z_1),(a,b,z))\lambda_{ab}\right)^2 \nonumber \\
&& + \sum_{u,v=1}^{K} \frac{1}{s_{u,z_2}s_{v,z_2}} \left(\sum_{a,b=1}^{K} S((u,v,z_2),(a,b,z))\lambda_{ab}\right)^2. \label{eqn: v10a2}
\end{eqnarray}
Note that
\begin{eqnarray}
A_1(b) &\geq & -\sum_{u,v=1}^{K} \left(\frac{1}{nb} - \frac{1}{n\tau}\right)\frac{1}{s_{u,z_1}s_{v,z_1}} \sum_{a,b=1}^{K} S((u,v,z_1),(a,b,z))\lambda_{ab}(1-\lambda_{ab}) \label{eqn: com1} \\
&& -\sum_{u,v=1}^{K} \frac{1}{n\tau}\frac{1}{s_{u,z_1}s_{v,z_1}} \sum_{a,b=1}^{K} S((u,v,z_1),(a,b,z))\lambda_{ab}(1-\lambda_{ab}) \nonumber \\
&& + \sum_{u,v=1}^{K} \frac{1}{n\tau} \frac{1}{s_{u,z_2}s_{v,z_2}} \sum_{a,b=1}^{K} S((u,v,z_2),(a,b,z))\lambda_{ab}(1-\lambda_{ab}) \nonumber \\
&\geq & -C \left(\frac{1}{nb} - \frac{1}{n\tau}\right)K^2 \rho_{m,n}    \nonumber \\
&&-C\sum_{u,v=1}^{K} \frac{1}{n} \frac{1}{s_{u,z_1}s_{v,z_1}} \sum_{(a,b)\neq (u,v)} S((u,v,z_1),(a,b,z)) \nonumber \\
&& - C\sum_{u,v=1}^{K} \frac{1}{n} \frac{1}{s_{u,z_2}s_{v,z_2}} \sum_{(a,b)\neq (u,v)} S((u,v,z_2),(a,b,z))\nonumber \\
&& -  C\frac{1}{n}\sum_{u,v=1}^{K} (S((u,v,z_1),(u,v,z))) |(s_{u,z_1}s_{v,z_1})^{-1} - (s_{u,z}s_{v,z})^{-1}| \nonumber \\
&& - C\frac{1}{n}\sum_{u,v=1}^{K} (S((u,v,z_2),(u,v,z))) |(s_{u,z_2}s_{v,z_2})^{-1} - (s_{u,z}s_{v,z})^{-1}| \nonumber \\
&& -C\frac{1}{n}\sum_{u,v=1}^{K} (s_{u,z}s_{v,z})^{-1} |(S((u,v,z_1),(u,v,z))) - (S((u,v,z_2),(u,v,z))| \nonumber \\
& \geq &  -C(\tau-b)\frac{K^2}{n}\rho_{m,n} - C(\tau-b)\mathcal{M}_{b,n,m}^2 \rho_{m,n}^2. 
\end{eqnarray}
Further,
\begin{eqnarray}
A_2(b) &\geq & -C\sum_{u,v=1}^{K} \frac{1}{s_{u,z_1}s_{v,z_1}} \left(\sum_{(a,b) \neq (u,v)} S((u,v,z_1),(a,b,z))\right)^2 \rho_{m,n}^2 \nonumber \\
&& - C\sum_{u,v=1}^{K} \frac{1}{s_{u,z_1}s_{v,z_1}} \left( S((u,v,z_1),(u,v,z))\right)^2 \rho_{m,n}^2 \nonumber \\
&& -C\sum_{u,v=1}^{K} \frac{1}{s_{u,z_2}s_{v,z_2}} \left(\sum_{(a,b) \neq (u,v)} S((u,v,z_2),(a,b,z))\right)^2\rho_{m,n}^2   \nonumber \\
&& - C\sum_{u,v=1}^{K} \frac{1}{s_{u,z_2}s_{v,z_2}} \left( S((u,v,z_2),(u,v,z))\right)^2 \rho_{m,n}^2\nonumber \\
& \geq & -C(\tau-b)n^2 \mathcal{M}_{b,n,m}^2 \rho_{m,n}^2 \nonumber \\
&& \ \ \ \ - C\sum_{u,v=1}^{K} (S((u,v,z_1),(u,v,z)))^2 |(s_{u,z_1}s_{v,z_1})^{-1} - (s_{u,z}s_{v,z})^{-1}| \rho_{m,n}^2 \nonumber \\
&& - C\sum_{u,v=1}^{K} (S((u,v,z_2),(u,v,z)))^2 |(s_{u,z_2}s_{v,z_2})^{-1} - (s_{u,z}s_{v,z})^{-1}|\rho_{m,n}^2  \nonumber \\
&& -C\sum_{u,v=1}^{K} (s_{u,z}s_{v,z})^{-1} |(S((u,v,z_1),(u,v,z)))^2 - (S((u,v,z_2),(u,v,z))^2| \rho_{m,n}^2 \nonumber \\
&  \geq & -C(\tau-b)n^2 \mathcal{M}_{b,n,m}^2\rho_{m,n}^2 . \label{eqn: v10a21}
\end{eqnarray}
This proves
\begin{eqnarray}
E^*(A(b)) \geq -C(\tau-b) (\frac{K^2}{n}+n^2\mathcal{M}_{b,n,m}^2\rho_{m,n}^2 ). \label{eqn: compa}
\end{eqnarray}
Next, consider $B(b)$. Define 
\begin{eqnarray}
\mu_{uv,1} &=& \frac{1}{n(\tau-b)}\sum_{t=nb+1}^{n\tau} \frac{1}{s_{u,z_2}s_{v,z_2}} \sum_{\stackrel{i: z_2(i)=u}{j: z_2(j)=v}} A_{ijt}, \nonumber \\
\mu_{uv,2} &=& \frac{1}{n(\tau-b)}\sum_{t=nb+1}^{n\tau} \frac{1}{s_{u,w_1}s_{v,w_1}} \sum_{\stackrel{i: w_1(i)=u}{j: w_1(j)=v}} A_{ijt}. \nonumber 
\end{eqnarray}
Note that
\begin{eqnarray}
B(b) &=& \frac{1}{n} \sum_{t=nb+1}^{n\tau}\sum_{i,j=1}^{m}\bigg[(A_{ijt}-\delta_{w_1(i)w_1(j),1})^2 - (A_{ijt}-\lambda_{z_2(i)z_2(j),2})^2 \bigg] \nonumber \\
&=& \frac{1}{n}  \sum_{t=nb+1}^{n\tau} \sum_{i,j=1}^{m} \bigg[  (\delta_{w_1(i)w_1(j),1})^2 - (\lambda_{z_2(i)z_2(j),2})^2 - 2 A_{ijt} (\delta_{w_1(i)w_1(j),1})  \nonumber \\
&& \hspace{9 cm}+ 2A_{ijt}(\lambda_{z_2(i)z_2(j),2})) \bigg] \nonumber \\
&=& (\tau-b) \sum_{u,v=1}^{K} (s_{u,w_1}s_{v,w_1} (\delta_{uv,1})^2 - s_{u,z_2}s_{v,z_2}(\lambda_{uv,2})^2 -2\mu_{uv,2}\delta_{uv,1} + 2\mu_{uv,1}\lambda_{uv,2} ).  \nonumber 
\end{eqnarray}
Therefore,
\begin{eqnarray}
E^{*}(B(b)) &=& B_1(b) + B_2(b)  \label{eqn: compb4}
\end{eqnarray}
where
\begin{eqnarray}
B_1(b) &=& (\tau-b) \sum_{u,v=1}^{K} \bigg[ s_{u,w_1}s_{v,w_1} V^{*}(\delta_{uv,1}) - s_{u,z_2}s_{v,z_2}V^{*}(\lambda_{uv,2})  \nonumber \\
&& \hspace{2 cm}- 2s_{u,w_1}s_{v,w_1} \text{Cov}^{*}(\mu_{uv,2},\delta_{uv,1})+ 2s_{u,z_2}s_{v,z_2}\text{Cov}^{*}(\mu_{uv,1},\lambda_{uv,2}) \bigg], \nonumber \\
B_2(b) &=& (\tau-b) \sum_{u,v=1}^{K} \bigg[s_{u,w_1}s_{v,w_1} (E^{*}(\delta_{uv,1}))^2 - s_{u,z_2}s_{v,z_2}(E^{*}(\lambda_{uv,2}))^2 \nonumber \\
&& \hspace{2 cm}- 2s_{u,w_1}s_{v,w_1} E^{*}(\mu_{uv,2})E^{*}(\delta_{uv,1}) + 2s_{u,z_2}s_{v,z_2}E^{*}(\mu_{uv,1})E^{*}(\lambda_{uv,2}) \bigg] \nonumber \\
 &=& (\tau-b) (B_{21} + B_{22} + B_{23} + B_{24}). \label{eqn: compb3}
\end{eqnarray}
Now, $V^{*}(\lambda_{uv,2})$ is given in (\ref{eqn: e2}) and 
\begin{eqnarray}
 V^{*}(\delta_{uv,1}) &=& \frac{1}{(n(1-b))^2} \bigg[ \frac{n(\tau-b)}{s_{u,w_1}s_{v,w_1}} \sum_{a,b=1}^{K} S((u,v,w_1),(a,b,z))\lambda_{ab}(1-\lambda_{ab})  \nonumber \\
 && \hspace{2 cm}+ \frac{n(1-\tau)}{s_{u,w_1}s_{v,w_1}} \sum_{a,b=1}^{K} S((u,v,w_1),(a,b,w))\delta_{ab}(1-\delta_{ab})  \bigg], \nonumber \\
 \text{Cov}^{*}(\mu_{uv,2},\delta_{uv,1}) &=& \frac{1}{n^2(\tau-b)(1-b)} \bigg[ \frac{n(\tau-b)}{s_{u,w_1}s_{v,w_1}} \sum_{a,b=1}^{K} S((u,v,w_1),(a,b,z))\lambda_{ab}(1-\lambda_{ab})\bigg], \nonumber \\
 \text{Cov}^{*}(\mu_{uv,1},\lambda_{uv,2}) &=& \frac{1}{n^2(\tau-b)\tau} \bigg[ \frac{n(\tau-b)}{s_{u,z_2}s_{v,z_2}} \sum_{a,b=1}^{K} S((u,v,z_2),(a,b,z))\lambda_{ab}(1-\lambda_{ab})\bigg]. \nonumber  
\end{eqnarray}
Using similar calculations as in (\ref{eqn: com1}), we obtain
\begin{eqnarray}
B_1(b) \geq -C(\tau-b)\left( \frac{K^2}{n}\rho_{m,n} + \mathcal{M}_{b,n,m}^2\rho_{m,n}^2 \right). \label{eqn: compb5}
\end{eqnarray}
Next,  consider $B_2(b)$. 
\begin{eqnarray}
B_{21} & = & \sum_{u,v=1}^{K} (s_{u,w_1}s_{v,w_1}-s_{u,w}s_{v,w}) (E^{*}(\delta_{uv,1}))^2   \nonumber \\
&& \hspace{1 cm} +  \sum_{u,v=1}^{K} s_{u,w}s_{v,w} ((E^*(\delta_{uv,1}))^2 - (E^*(\delta_{uv,w}))^2) + \sum_{u,v=1}^{K} s_{u,w}s_{v,w} (E^*(\delta_{uv,w}))^2 \nonumber \\
& \geq & -C\mathcal{M}_{b,n,m}^2 - C\sum_{u,v=1}^{K} s_{u,w}s_{v,w} |(E^*(\delta_{uv,1})) - (E^*(\delta_{uv,w}))| + \sum_{u,v=1}^{K}s_{u,w}s_{v,w} \delta_{uv}^2 \nonumber \\
& \geq & -C\mathcal{M}_{b,n,m}^2\rho_{m,n}^2  - B_{211} + \sum_{u,v=1}^{K}s_{u,w}s_{v,w} \delta_{uv}^2. \label{eqn: compb1}
\end{eqnarray}
We then get
\begin{eqnarray}
B_{211} = && \sum_{u,v=1}^{K} s_{u,w}s_{v,w} |(E^*(\delta_{uv,1})) - (E^*(\delta_{uv,w}))| \nonumber \\
& \leq & \sum_{u,v=1}^{K} s_{u,w}s_{v,w} \bigg | \frac{1}{n(1-b)} \bigg[ n(\tau-b) \frac{1}{s_{u,w_1}s_{v,w_1}} \sum_{a,b=1}^{K} S((u,v,w_1),(a,b,z))\lambda_{ab} \nonumber \\
&& + n(1-\tau) \frac{1}{s_{u,w_1}s_{v,w_1}}\sum_{a,b=1}^{K} S((u,v,w_1),(a,b,w))\delta_{ab} \bigg] -  \delta_{uv} \bigg | \nonumber \\
& \leq & C\frac{\tau-b}{1-b} \bigg| \sum_{u,v=1}^{K} \bigg[\frac{s_{u,w}s_{v,w}}{s_{u,w_1}s_{v,w_1}} \sum_{a,b=1}^{K} S((u,v,w_1),(a,b,z))\lambda_{ab} - s_{u,w}s_{v,w} \delta_{uv}\bigg] \bigg| \nonumber \\
&& +  C\frac{1-\tau}{1-b} \bigg| \sum_{u,v=1}^{K} \bigg[\frac{s_{u,w}s_{v,w}}{s_{u,w_1}s_{v,w_1}} \sum_{a,b=1}^{K} S((u,v,w_1),(a,b,w))\delta_{ab} - s_{u,w}s_{v,w} \delta_{uv}\bigg] \bigg| \nonumber \\
& = & B_{211a} + B_{211b},\ \ \text{say}. \label{eqn: compb2}
\end{eqnarray}
It is easy to see that 
\begin{eqnarray}
B_{211a} &\leq & C\rho_{m,n}^2 \sum_{u,v=1}^{K} \bigg|\frac{s_{u,w}s_{v,w}}{s_{u,w_1}s_{v,w_1}}-1\bigg| s_{u,w_1}s_{v,w_1} \nonumber \\
&& \hspace{0.5 cm}+ C\rho_{m,n}^2 \sum_{u,v=1}^{K}\sum_{a,b=1}^{K} |S((u,v,w_1),(a,b,z))-S((u,v,w),(a,b,z))|    \nonumber \\
&\leq & C\mathcal{M}_{b,n,m}^2\rho_{m,n}^2. \nonumber 
\end{eqnarray}
Similarly, $B_{211b} \leq C\mathcal{M}_{b,n,m}^2\rho_{m,n}^2$. Thus, by (\ref{eqn: compb1}) and (\ref{eqn: compb2}), we get
\begin{eqnarray}
B_{21} \geq -C\mathcal{M}_{b,n,m}^2\rho_{m,n}^2  +\sum_{u,v=1}^{K}s_{u,w}s_{v,w}\delta_{uv}^2.  \nonumber 
\end{eqnarray}
Using similar arguments as above, we also have
\begin{eqnarray}
B_{22} &\geq & -C\mathcal{M}_{b,n,m}^2 \rho_{m,n}^2 -\sum_{u,v=1}^{K}s_{u,z}s_{v,z}\lambda_{uv}^2,  \nonumber \\
B_{23} &\geq & -C\mathcal{M}_{b,n,m}^2 \rho_{m,n}^2 -  2\sum_{u,v=1}^{K} s_{u,w}s_{v,w}\delta_{uv} \sum_{a,b=1}^{K}S((u,v,w), (a,b,z))\lambda_{ab} \nonumber \\
& \geq & -C\mathcal{M}_{b,n,m}^2\rho_{m,n}^2  - 2 \sum_{i,j=1}^{m} \lambda_{z(i)z(j)}\delta_{w(i)w(j)}, \nonumber \\
B_{24} & \geq & -C\mathcal{M}_{b,n,m}^2\rho_{m,n}^2 + 2\sum_{u,v=1}^{K}s_{u,z}s_{v,z}\lambda_{uv}^2. \nonumber
\end{eqnarray}
Hence, by (\ref{eqn: compb3})
\begin{eqnarray}
B_2(b) \geq -C(\tau-b)\mathcal{M}_{b,n,m}^2\rho_{m,n}^2  + C(\tau-b)||\text{Ed}_{z}(\Lambda) -\text{Ed}_{w}(\Delta) ||_F^2. \nonumber 
\end{eqnarray}
Consequently, by (\ref{eqn: compb4}) and (\ref{eqn: compb5}), we have
\begin{eqnarray}
E^{*}(B(b)) &\geq & -C(\tau-b)\frac{K^2}{n}\rho_{m,n} -C(\tau-b)\mathcal{M}_{b,n,m}^2 \rho_{m,n}^2 \nonumber \\
&& \hspace{1.5 cm} + C(\tau-b)||\text{Ed}_{z}(\Lambda) -\text{Ed}_{w}(\Delta) ||_F^2. \label{eqn: compbb}
\end{eqnarray}
Recall $D(b)$ in (\ref{eqn: abd1}). Similar arguments as above also lead us to conclude
\begin{eqnarray}
E^*(D(b)) \geq -C(\tau-b)\frac{K^2}{n}\rho_{m,n} -C(\tau-b)n^2\mathcal{M}_{b,n,m}^2\rho_{m,n}^2 . \label{eqn: compd}
\end{eqnarray}
Hence by (\ref{eqn: abd1}), (\ref{eqn: compa}), (\ref{eqn: compbb}) and (\ref{eqn: compd}), we have 
\begin{eqnarray}
E^* (L_1-L_2) &\geq & -C(\tau-b)\frac{K^2}{n}\rho_{m,n} -C(\tau-b)n^2\mathcal{M}_{b,n,m}^2 \rho_{m,n}^2 \nonumber \\
&& \hspace{4 cm} + C(\tau-b)||\text{Ed}_{z}(\Lambda) -\text{Ed}_{w}(\Delta) ||_F^2 \nonumber \\
&\geq & -C(\tau-b)||\text{Ed}_{z}(\Lambda) -\text{Ed}_{w}(\Delta) ||_F^2, \ \ \text{by SNR-DSBM and (A1)}. \label{eqn: compe}
\end{eqnarray}
\noindent This proves (\ref{eqn: lsecon1f}). 
\vskip 2pt
Next, we compute variances. By (\ref{eqn: abd1}), 
\begin{eqnarray}
V^{*}(L_1-L_2) = V^{*}(A(b)) + V^{*}(B(b)) + V^{*}(D(b)). \nonumber
\end{eqnarray}
We only show the computation for $V^{*}(A(b))$. Other terms can be handled similarly.
\begin{eqnarray}
V^* (A(b)) & \leq & C \sum_{u,v=1}^{K} s_{u,z_2}s_{v,z_2}V^* (\lambda_{uv,2})^2 + C \sum_{u,v=1}^{K} s_{u,z_1}s_{v,z_1}V^* (\lambda_{uv,1})^2 \nonumber \\
&& \hspace{3 cm}+ 2C\sum_{u,v=1}^{K} s_{u,z_2}s_{v,z_2}V^{*}(\lambda_{uv,2}\lambda_{uv,3}). \label{eqn: vara1}
\end{eqnarray}
Let $\text{Cum}_r(X)$ denote the $r$-th order cumulant of $X$.   Then, 
\begin{eqnarray}
V^* (\lambda_{uv,2})^2 & \leq & \frac{C}{n^4 (s_{u,z_2}s_{v,z_2})^4}E \bigg[\sum_{t=1}^{n\tau} \sum_{\stackrel{i: z_2(i)=u}{j: z_2(j)=v}} (A_{ijt} -EA_{ijt}) \bigg]^4  \nonumber \\
& \leq & \frac{C}{n^4 (s_{u,z_2}s_{v,z_2})^4} \bigg[ \sum_{t=1}^{n\tau} \sum_{\stackrel{i: z_2(i)=u}{j: z_2(j)=v}} \text{Cum}_4(A_{ijt} -EA_{ijt})  \nonumber \\
&& + \bigg( \sum_{t=1}^{n\tau} \sum_{\stackrel{i: z_2(i)=u}{j: z_2(j)=v}} \text{Cum}_4(A_{ijt} -EA_{ijt})\bigg)\bigg( \sum_{t=1}^{n\tau} \sum_{\stackrel{i: z_2(i)=u}{j: z_2(j)=v}} \text{Cum}_4(A_{ijt} -EA_{ijt})\bigg)\bigg] \nonumber \\
& \leq & \frac{C \rho_{m,n}}{n^2(s_{u,z_2}s_{v,z_2})^2}. \nonumber 
\end{eqnarray}
Similarly,  $V^* (\lambda_{uv,1}) \leq  \frac{C \rho_{m,n}}{n^2(s_{u,z_1}s_{v,z_1})^2}$ and  $V^{*}(\lambda_{uv,2}\lambda_{uv,3}) \leq  \frac{C \rho_{m,n}}{n^2(s_{u,z_2}s_{v,z_2})^2}$. Hence,
\begin{eqnarray}
V^{*}(A(b)) \leq C\rho_{m,n}\frac{K^2}{n^2} \leq C(\tau-b)\rho_{m,n}\frac{K^2}{n}.\nonumber 
\end{eqnarray}
Using similar arguments as above, we also have
\begin{eqnarray}
V^{*}(B(b)), V^{*}(D(b)) \leq C\rho_{m,n}\frac{K^2}{n^2} \leq C(\tau-b)\rho_{m,n}\frac{K^2}{n}.\nonumber 
\end{eqnarray}
Hence,
\begin{eqnarray}
V^{*}(L_1-L_2)  \leq C\frac{K^2}{n^2}\rho_{m,n} \leq C(\tau-b)\rho_{m,n}\frac{||\text{Ed}_{z}(\Lambda)-\text{Ed}_{w}(\Delta)||_F^2}{n}.\label{eqn: Varcal1}
\end{eqnarray}
\noindent This proves (\ref{eqn: lsecon2final}). 
\vskip 2pt
\noindent Therefore, by Lemma \ref{lem: wvan1} the proof of Theorem \ref{lem: b1} is complete. $\blacksquare$

\begin{remark} \label{rem: proof}
Following the proof of Theorem \ref{lem: b1} (see (\ref{eqn: compe})), it is easy to see that
Assumption (A9) $n^2\mathcal{M}_{b,n,m}^2\rho_{m,n}^2  ||\text{Ed}_{z}(\Lambda) -\text{Ed}_{w}(\Delta)||_F^{-2}  \to 0\ \forall b\in (c^*,1-c^*)$ on the misclassification rate due to clustering is required for achieving consistency of $\tilde{\tilde{\tau}}_{m,n}$. The rate of $\mathcal{M}_{b,n,m}$ varies for different clustering procedures.  For Clustering Algorithm I presented in Section \ref{sec: DSBM}, the  rate of $\mathcal{M}_{b,n,m}^2$ is given in Theorem \ref{thm: mismiscluster} and hence (A9) reduces to (A1).  Details are given before stating (A1). Two variants together with Assumption (A9) and their 
corresponding misclassification error rates have also been presented and discussed in Section \ref{sec:discuss}. 

Note that Assumption (A9) is needed when used in conjunction with the every time point algorithm. However, the assumption can be weakened 
if we only cluster the nodes once before and after the change pont. Note that we assume that the change-point lies in the interval 
$ (c^*,1-c^*)$, which implies that we can cluster nodes using all time points before $c$ and obtain $z$ and similarly cluster nodes using
all time points after ($1-c^*$ to obtain $w$.  Then, $A_2(b) = 0$ and as a consequence 
$E^*(A(b)) \geq -C(\tau-b) (\frac{K^2}{n}\rho_{m,n}+\mathcal{M}_{b,n,m}^2\rho_{m,n}^2 )$ holds which is a sharper lower bound for $E^*(A(b))$ than the one provided
in (\ref{eqn: compa}). Analogously, for $w$ we get $E^*(D(b)) \geq -C(\tau-b) (\frac{K^2}{n}\rho_{m,n}+\mathcal{M}_{b,n,m}^2\rho_{m,n}^2 )$,
This provides $E^* (L_1-L_2) \geq  -C(\tau-b)\frac{K^2}{n}\rho_{m,n} -C(\tau-b)\mathcal{M}_{b,n,m}^2\rho_{m,n}^2  + C(\tau-b)||\text{Ed}_{z}(\Lambda) -\text{Ed}_{w}(\Delta) ||_F^2$  and a weaker version  (A9*) is needed [$\mathcal{M}_{b,n,m}^2\rho_{m,n}^2  ||\text{Ed}_{z}(\Lambda) -\text{Ed}_{w}(\Delta)||_F^{-2}  \to 0\ \forall \ b\in (c^*,1-c^*)$ (instead of (A9))] along with SNR-DSBM to establish (\ref{eqn: lsecon1f}).
\end{remark}

\subsection{Proof of Theorem \ref{thm: c3}} \label{subsec: c3}
Next, we focus on establishing the convergence rate for $\tilde{\tilde{\Lambda}}$, while analogous arguments are applicable for $\tilde{\tilde{\Delta}}$.  
\vskip 2pt
\noindent Without loss of generality, assume $\tilde{\tilde{\tau}}_{m,n} > \tau_{m,n}$.  For some clustering function $f$ and $b \in (c^{*},1-c^{*})$, recall that $\tilde{\lambda}_{uv,f,(b,n),m} = \frac{1}{nb}\sum_{t=1}^{nb} \frac{1}{s_{u,f}s_{v,f}} \sum_{\stackrel{f(i)=u}{f(j)=v}} A_{ij,(t,n)}$. 

\noindent For some $C>0$, we have
\begin{eqnarray}
||\text{Ed}_{\tilde{\tilde{z}}}(\tilde{\tilde{\Lambda}}) - \text{Ed}_{z}(\Lambda)||_F^2 &=& \sum_{i,j=1}^{m}(\tilde{\lambda}_{\tilde{\tilde{z}}(i)\tilde{\tilde{z}}(j),\tilde{\tilde{z}},(\tilde{\tilde{\tau}}_{m,n},n),m} - \lambda_{z(i)z(j)})^2 \nonumber \\
& \leq & 2\sum_{i,j=1}^{m}(\tilde{\lambda}_{\tilde{\tilde{z}}(i)\tilde{\tilde{z}}(j),\tilde{\tilde{z}},(\tilde{\tilde{\tau}}_{m,n},n),m} -\tilde{\lambda}_{\tilde{\tilde{z}}(i)\tilde{\tilde{z}}(j),\tilde{\tilde{z}},(\tau_{m,n},n),m})^2 \nonumber \\
&& + 2\sum_{i,j=1}^{m} (\tilde{\lambda}_{\tilde{\tilde{z}}(i)\tilde{\tilde{z}}(j),\tilde{\tilde{z}},(\tau_{m,n},n),m} - \lambda_{z(i)z(j)})^2 \nonumber \\
& \leq & Cm^2(\hat{\hat{\tau}}_{m,n} - \tau_{m,n})^2 \nonumber \\
&& \hspace{1 cm}+ C\sum_{i,j=1}^{m} (\tilde{\lambda}_{\tilde{\tilde{z}}(i)\tilde{\tilde{z}}(j),\tilde{\tilde{z}},(\tau_{m,n},n),m} - E^{*}\tilde{\lambda}_{\tilde{\tilde{z}}(i)\tilde{\tilde{z}}(j),\tilde{\tilde{z}},(\tau_{m,n},n),m})^2 \nonumber \\
&& \hspace{3.5 cm} C\sum_{i,j=1}^{m} (E^{*}\tilde{\lambda}_{\tilde{\tilde{z}}(i)\tilde{\tilde{z}}(j),\tilde{\tilde{z}},(\tau_{m,n},n),m}-\lambda_{z(i)z(j)})^2 \nonumber \\
&=& \mathbb{T}_1 + \mathbb{T}_2 + \mathbb{T}_3\ \ \text{(say)}. \nonumber 
\end{eqnarray}
Note that by Theorem \ref{lem: b1} 
we have $m^{-2}\mathbb{T}_1 = O_{\text{P}}(I(n>1)n^{-2}||\text{Ed}_{z}(\Lambda) -\text{Ed}_{w}(\Delta) ||_F^{-4}\rho_{m,n}^2)$. 

\noindent Let $P^{*}(\cdot) = P(\cdot|\tilde{\tilde{z}},\tilde{\tilde{w}})$.  By the sub-Gaussian property of Bernoulli random variables and since for some positive sequence $\{\tilde{C}_{m,n}\}$, $\hat{\mathcal{S}}_{m,n} \geq \tilde{C}_{m,n}\ \forall \ n$ with probability $1$, we get
\begin{eqnarray}
P^{*}(m^{-2}\mathbb{T}_2 \geq t) &=& P^{*}\left(\frac{1}{m^2}\sum_{i,j=1}^{m} (\tilde{\lambda}_{\tilde{\tilde{z}}(i)\tilde{\tilde{z}}(j),\tilde{\tilde{z}},(\tau_{m,n},n),m} - E^{*}\tilde{\lambda}_{\tilde{\tilde{z}}(i)\tilde{\tilde{z}}(j),\tilde{\tilde{z}},(\tau_{m,n},n),m})^2 \geq t \right)  \nonumber \\
& \leq & \sum_{i,j=1}^{m} P^{*} \left(|\tilde{\lambda}_{\tilde{\tilde{z}}(i)\tilde{\tilde{z}}(j),\tilde{\tilde{z}},(\tau_{m,n},n),m} - E^{*}\tilde{\lambda}_{\tilde{\tilde{z}}(i)\tilde{\tilde{z}}(j),\tilde{\tilde{z}},(\tau_{m,n},n),m}| \geq C\sqrt{t} \right) \nonumber \\
& \leq & m^2 C_1 e^{-C_2 n \hat{\mathcal{S}}_{m,n}^2 t} \leq m^2 C_1 e^{-C_2 n \tilde{C}_{m,n}^2 t}. \nonumber
\end{eqnarray}
Therefore, $P\left(m^{-2}\mathbb{T}_2 \geq t \right)  \leq  m^2 C_1 e^{-C_2 n \tilde{C}_{m,n} t} \to 0$ for $t = \frac{\log m}{n \tilde{C}_{m,n}^2} \rho_{m,n}$.  Hence, $m^{-2} \mathbb{T}_2 = O_{\text{P}} \left( \frac{\log m}{n \tilde{C}_{m,n}^2} \rho_{m,n} \right)$. 
Finally, 
\begin{eqnarray}
m^{-2}\mathbb{T}_3 &=& \frac{1}{m^2}\sum_{i,j=1}^{m} (E^{*}\tilde{\lambda}_{\tilde{\tilde{z}}(i)\tilde{\tilde{z}}(j),\tilde{\tilde{z}},(\tau_{m,n},n),m}-\lambda_{z(i)z(j)})^2 \nonumber \\
&=& \frac{1}{m^2}\sum_{i,j=1}^{m} \bigg( \frac{1}{s_{\tilde{\tilde{z}}(i),\tilde{\tilde{z}}(j)}s_{\tilde{\tilde{z}}(j),\tilde{\tilde{z}}}} \sum_{a,b=1}^{K} S((\tilde{\tilde{z}}(i),\tilde{\tilde{z}}(j),\tilde{\tilde{z}}),(a,b,z)) (\lambda_{ab} -\lambda_{z(i)z(j)})\bigg)^2 \nonumber \\
& \leq &  C\mathcal{M}_{\tilde{\tilde{\tau}}_{m,n},n,m}^2  = O_{\text{P}} \left( \left(\frac{Km}{n\nu_{m,n}^2}\right)^2\right).  \nonumber 
\end{eqnarray}
Thus, combining the convergence rate of $\mathbb{T}_1$, $\mathbb{T}_2$ and $\mathbb{T}_3$ derived above, establishes the convergence rate of $\text{Ed}_z(\tilde{\tilde{\Lambda}})$ when $\tilde{\tilde{\tau}}_{m,n} > \tau_{m,n}$. Similar arguments work for $\tilde{\tilde{\tau}}_{m,n} \leq \tau_{m,n}$. 

\noindent This completes the proof of Theorem \ref{thm: c3}.  $\blacksquare$

\subsection{More on Remark \ref{rem: otherclustering}} \label{sec:discuss}

A key step in using the ``all time point clustering" algorithm involves clustering. A specific clustering procedure proposed in \cite{B2017} 
was used to identify the communities and for locating the change-point in Section \ref{sec: DSBM}. Nevertheless, other clustering algorithms proposed in the literature  [\cite{P2017,RCY2011}] could be employed. For any given clustering algorithms,  (a) and (b) in Remark \ref{rem: otherclustering} hold. 

However, for (A9) to hold, the corresponding misclassification rate in the dense regime needs to satisfy 
$\mathcal{M}_{b,n,m} n  ||\text{Ed}_{z}(\Lambda) -\text{Ed}_{w}(\Delta)||_F^{-1} \to 0$ for consistency of the estimators. Next, we elaborate on these alternative clustering algorithms.

\vskip 5pt
 \noindent \textbf{Clustering Algorithm II}.   Instead of doing a spectral decomposition of the average adjacency matrices $B_1$ and $B_2$, 
the spectral decompositon is applied to their corresponding Laplcian matrices.
 \vskip 5pt
 \noindent An appropriate modification of the Proof of Theorem $2.1$ in \cite{RCY2011} implies
\begin{eqnarray} \label{eqn: c2rem}
\mathcal{M}_{b,n,m} = O_{\text{P}} \left(\frac{P_{m,n}}{\xi_{K_{m,n}}^4} \left(\frac{(\log m)^2}{nm} + m^2|\tau_{m,n}-b| ||\text{Ed}_{z}(\Lambda) -\text{Ed}_{w}(\Delta)||_F^{2} \right)\right)\,, 
\end{eqnarray} 
where $P_{m,n} = \max \{s_{u,z}, s_{u,w}: u=1,2,\ldots, K_{m,n}\}$ is the maximum community size and $\xi_{K_{m,n}}$ is the minimum between the $K_{m,n}$-th smallest  eigenvalue of the Laplacians of $B_1$ and $B_2$. A proof is given in Section \ref{subsec: remc2}.  Therefore, to satisfy $\mathcal{M}_{b,n,m} n  ||\text{Ed}_{z}(\Lambda) -\text{Ed}_{w}(\Delta)||_F^{-1} \to 0$ for the above spectral clustering, we need $\frac{n^{1/2} P_{m,n} (\log m)^2}{\xi_{K_{m,n}}^4 m K_{m,n}} = O(1)$ and $\frac{P_{m,n} m^3 n}{\xi_{K_{m,n}}^4} \to 0$. However, the latter condition seems excessively stringent in practical settings. For example, suppose $\Lambda = (p_1-q_1)I_{K_{m,n}} + q_1J_{K_{m,n}}$ and $\Delta = (p_2-q_2)I_{K_{m,n}} + q_2J_{K_{m,n}}$ where $I_{K_{m,n}}$ is the identity matrix of order ${K_{m,n}}$ and $J_{K_{m,n}}$ is the ${K_{m,n}} \times {K_{m,n}}$ matrix whose entries all equal $1$.   Further, suppose $0 < C < p_1, q_1, p_2, q_2 < 1-C<1$ and that the communities are of equal size. Then, 
$P_n = O(m/{K_{m,n}})$.   Moreover, \cite{RCY2011} established that $\xi_{{K_{m,n}}} = O({K_{m,n}}^{-1})$.  Hence, $\frac{n^{1/2} P_{m,n} (\log m)^2}{\xi_{K_{m,n}}^4 m K_{m,n}} = O(\sqrt{n}K_{m,n}^2 (\log m)^2) \to \infty$ and $\frac{P_{m,n} m^3 n}{\xi_{K_{m,n}}^4} = O(m^4 n {K_{m,n}}^3) \to \infty$. On the other hand, as we have seen in Example \ref{example: misclassnew},  (A1) is satisfied for this example.
\vskip 5pt

\noindent \textbf{Clustering Algorithm III}. In this case, the following modification of \cite{RCY2011}'s algorithm for community detection is employed as follows.  Define
\begin{eqnarray} \label{eqn: deglapmatrix}
D_{i,(t,n)} &=& \sum_{j=1}^{m} A_{ij,(t,n)}, \ \ \ 
D_{(t,n)} = \text{Diag}\{D_{i,(t,n)}: 1 \leq i \leq n\},\ \ \\  
L_{(t,n)} &=& D_{(t,n)}^{-1/2} A_{t,n} D_{(t,n)}^{-1/2},\ \ \ 
L_{\Lambda, (b,n)} = \frac{1}{nb}\sum_{t=1}^{nb}L_{(t,n)},\ \ L_{\Delta, (b,n)} = \frac{1}{n(1-b)}\sum_{t=nb+1}^{n} L_{(t,n)}. \nonumber
\end{eqnarray}
Note that $I - L_{(t,n)}$ is the Laplacian of $A_{t,n}$. 
Next, run the spectral clustering algorithm introduced in \cite{RCY2011} after replacing $L$ respectively by $L_{\Lambda, (b,n)}$ and $L_{\Delta, (b,n)}$ for estimating $z$,  $w$. 
\vskip 5pt
\noindent  In this case,
\begin{eqnarray} \label{eqn: remc3}
\mathcal{M}_{b,n,m} &=& O_{\text{P}}\bigg(\frac{P_{m,n}}{\xi_{K_{m,n}}^4} \bigg(\frac{(\log m\sqrt{n})^2}{\sqrt{n}m} + m^2|\tau_{m,n}-b| ||\text{Ed}_{z}(\Lambda) -\text{Ed}_{w}(\Delta)||_F^{2} \nonumber \\
&& \hspace{6 cm}  + |\tau_{m,n}-b| \frac{(\log m)^2}{m}\bigg)\bigg),
\end{eqnarray}
where $P_{m,n}$ and $\xi_{K_{m,n}}$ are as described after (\ref{eqn: c2rem}). A proof is given in Section \ref{subsec: remc3}. Therefore, to satisfy $\mathcal{M}_{b,n,m} n  ||\text{Ed}_{z}(\Lambda) -\text{Ed}_{w}(\Delta)||_F^{-1} \to 0$ for this variant of the spectral clustering algorithm, we require $\frac{n^{3/2} P_{m,n} (\log m)^2}{\xi_{K_{m,n}}^4 m K_{m,n}} = O(1)$, $\frac{n P_{m,n} (\log m\sqrt{n})^2}{\xi_{K_{m,n}}^4 m K_{m,n}} = O(1)$  and $\frac{P_{m,n} m^3 n}{\xi_{K_{m,n}}^4} \to 0$. However, these are much stronger conditions that the one required for Clustering Algorithm II.

The upshot of the previous discussion is that Clustering Algorithm I requires a milder assumption (A1) on the misclassification rate compared to  Clustering Algorithms II and III. This is the reason that the results established in Sections \ref{sec: DSBM} and \ref{sec: 2step} leverage the former algorithm.

\subsubsection{Justification of (\ref{eqn: c2rem})} \label{subsec: remc2}
Let $\mathcal{L}(A)$ denote the Laplacian of $A$. Also without loss of generality, assume $b >\tau$.  Using similar arguments as in Appendix B, C and D of \cite{RCY2011}, we can easily show that for some $C > 0$ and with probability tending $1$, 
\begin{eqnarray}
\mathcal{M}_{b,n,m} &\leq & C\frac{P_{m,n}}{\xi_{K_{m,n}}^4} ||(\mathcal{L}(\frac{1}{n}\sum_{t=1}^{nb}A_{t,n}))^2 - (\mathcal{L}(\text{Ed}_{z}(\Lambda)))^2||_F^2 \nonumber \\
& \leq & C\frac{P_{m,n}}{\xi_{K_{m,n}}^4}\bigg( ||(\mathcal{L}(\frac{1}{n}\sum_{t=1}^{n\tau}A_{t,n}))^2 - (\mathcal{L}(\text{Ed}_{z}(\Lambda)))^2||_F^2 \nonumber \\
&& \hspace{0.5 cm}+ ||(\mathcal{L}(\frac{1}{n}\sum_{t=n\tau+1}^{nb}A_{t,n}))^2 - (\mathcal{L}(\text{Ed}_{z}(\Delta)))^2||_F^2 \nonumber \\
&& \hspace{2 cm}+  |\tau-b|\ ||(\mathcal{L}(\text{Ed}_{z}(\Delta)))^2 - (\mathcal{L}(\text{Ed}_{z}(\Lambda)))^2 ||_F^2 \bigg)\nonumber \\
&=& C\frac{P_{m,n}}{\xi_{K_{m,n}}^4} (A_1 +A_2+A_3),\ \text{say}. 
\end{eqnarray}
Then,using similar arguments as in Lemma $A.1$ of \cite{RCY2011}, we  obtain
\begin{eqnarray}
A_1, A_2 &=& O_{\text{P}}(\frac{(\log m)^2}{mn}). \nonumber
\end{eqnarray}
Define,
\begin{eqnarray}
\mathcal{D}_{i,\Lambda} = \sum_{j=1}^{m}\lambda_{z(i)z(j)},\ \ \mathcal{D}_{\Lambda} = \text{Diag}\{\mathcal{D}_{i,\Lambda}:\ 1 \leq i \leq m\}, \nonumber \\
\mathcal{D}_{i,\Delta} = \sum_{j=1}^{m}\delta_{w(i)w(j)},\ \ \mathcal{D}_{\Delta} = \text{Diag}\{\mathcal{D}_{i,\Delta}:\ 1 \leq i \leq m\}. \nonumber
\end{eqnarray}
Then, 
\begin{eqnarray}
A_3 &\leq & Cm||\mathcal{L}(\text{Ed}_{z}(\Lambda)) - \mathcal{L}(\text{Ed}_{w}(\Delta))||_F^2 \nonumber \\
&\leq & Cm ||\mathcal{D}_{\Lambda}^{-1/2}\text{Ed}_{z}(\Lambda) \mathcal{D}_{\Lambda}^{-1/2}  - \mathcal{D}_{\Delta}^{-1/2} \text{Ed}_{z}(\Delta)\mathcal{D}_{\Delta}^{1/2} ||_F^2\nonumber \\
& \leq & Cm \bigg[ ||\text{Ed}_{z}(\Lambda) - \text{Ed}_{z}(\Delta)||_F^2 ||\mathcal{D}_{\Lambda}^{1/2}||_F^4 + 2 ||\mathcal{D}_{\Lambda}^{-1/2} - \mathcal{D}_{\Delta}^{-1/2}||_F^2||\text{Ed}_{z}(\Delta)||_F^2||\mathcal{D}_{\Delta}^{-1/2}||_F^2 \bigg] \nonumber \\
& \leq & Cm (||\text{Ed}_{z}(\Lambda) - \text{Ed}_{z}(\Delta)||_F^2 + \frac{C}{m} ||\text{Ed}_{z}(\Lambda) - \text{Ed}_{z}(\Delta)||_F^2 m^2) \nonumber \\
& \leq & Cm^2||\text{Ed}_{z}(\Lambda) - \text{Ed}_{z}(\Delta)||_F^2. \nonumber
\end{eqnarray}
Hence, 
\begin{eqnarray}
\mathcal{M}_{b,n,m} = O_{\text{P}}\left(\frac{P_{m,n}}{\xi_{K_{m,n}}^4} \left( \frac{(\log m)^2}{mn}  +|\tau-b| m^2 ||\text{Ed}_{z}(\Lambda) - \text{Ed}_{z}(\Delta)||_F^2\right) \right). \nonumber
\end{eqnarray}
This completes the  justification of (\ref{eqn: c2rem}). 

\subsubsection{Justification of (\ref{eqn: remc3})} \label{subsec: remc3}
Using similar arguments to those presented  in Section \ref{subsec: remc2},  with probability tending to $1$, we have
\begin{eqnarray}
\mathcal{M}_{b,n,m} &\leq & C \frac{P_{m,n}}{\xi_{K_{m,n}}^4} ||(L_{\Lambda,(b,n)})^2-(\mathcal{L}(\text{Ed}_{z}(\Lambda)))^2 ||_F^2 \nonumber \\
& \leq & C \frac{P_{m,n}}{\xi_{K_{m,n}}^4} \bigg[||(L_{\Lambda,(\tau,n)})^2-(\mathcal{L}(\text{Ed}_{z}(\Lambda)))^2 ||_F^2 + \frac{1}{n}\sum_{t=n\tau +1}^{nb} ||(\mathcal{L}(A_{t,n}))^2-(\mathcal{L}(\text{Ed}_{w}(\Delta)))^2 ||_F^2  \nonumber \\
& & \hspace{2 cm} +|\tau-b| ||(\mathcal{L}(\text{Ed}_{z}(\Delta)))^2 - (\mathcal{L}(\text{Ed}_{z}(\Lambda)))^2  ||_F^2\bigg] \nonumber \\
&\leq & C \frac{P_{m,n}}{\xi_{K_{m,n}}^4} (A_1+A_2 + |\tau - b| m^2 ||\text{Ed}_{z}(\Lambda) - \text{Ed}_{z}(\Delta)||_F^2), \ \text{say}. \nonumber 
\end{eqnarray}
Then, by Theorem $2.1$ in \cite{RCY2011}, we have
\begin{eqnarray}
A_1 = O_{\text{P}}(\frac{(\log m\sqrt{n})^2}{m\sqrt{n}})\ \ \text{and}\ \ A_2 = O_{\text{P}}(|\tau-b|\frac{(\log m)^2}{m}). \nonumber
\end{eqnarray}
Hence,
\begin{eqnarray} \nonumber
\mathcal{M}_{b,n,m} = O_{\text{P}}\left(\frac{P_{m,n}}{\xi_{K_{m,n}}^4} \left(\frac{(\log m\sqrt{n})^2}{\sqrt{n}m} + m^2|\tau-b| ||\text{Ed}_{z}(\Lambda) -\text{Ed}_{w}(\Delta)||_F^{2}  + |\tau-b| \frac{(\log m)^2}{m}\right)\right). 
\end{eqnarray}
This completes the justification of (\ref{eqn: remc3}). 

\subsection{Proof of Theorem \ref{thm: 2step}} \label{subsec: 2step}
To prove Theorem \ref{thm: 2step}, note that the proof of Theorem \ref{lem: b1} in Section \ref{subsec: b1} goes through once we use
$\mathcal{M}_{b,n,m} =0$, $z_1=z_2=z$,  $w_1=w_2=w$ and $K=m$.  In this case, (\ref{eqn: compe}) and (\ref{eqn: Varcal1}) implies 
\begin{eqnarray}
E^{*}(L_1-L_2) & \geq & - C(\tau-b)\frac{m^2 \rho_{m,n}}{n} + C(\tau-b)||\text{Ed}_{z}(\Lambda) - \text{Ed}_{w}(\Delta)||_F^2, \nonumber \\
V^{*}(L_1-L_2)  &\leq &  C(\tau-b) \rho_{m,n} ||\text{Ed}_{z}(\Lambda) - \text{Ed}_{w}(\Delta)||_F^2. 
\nonumber
\end{eqnarray}
Therefore, by SNR-ER and Lemma \ref{lem: wvan1}, Theorem \ref{thm: 2step} follows. $\blacksquare$

\subsection{More on Remark \ref{rem: further}}

In this section, we focus on the dense network regime. A similar discussion is applicable for the sparse regime as well. As pointed out in Remark \ref{rem: further}, one may wonder regarding settings where SNR-DSBM holds, but neither (A1) nor SNR-ER do. The following Examples \ref{example: v10new1} and \ref{example: v10new2} introduce such settings in the context of changes in the connection probabilities and in the community structures, respectively.

\begin{example} \label{example: v10new1}
\textbf{(Change in connection probabilities)} Consider a DSBM where
\begin{eqnarray}
z=w\hspace{1 cm}\text{and}\hspace{1 cm} \Lambda = \Delta - \frac{1}{\sqrt{n}}. \label{eqn: v10new1}
\end{eqnarray}
  In this case $||\text{Ed}_{z}(\Lambda)-\text{Ed}_{w}(\Delta)||_F^2 = \frac{m^2}{n}$. Therefore, SNR-ER does not hold. However, SNR-DSBM holds if $K=o(m)$. Cases (a)-(c) presented below  provide settings where (A1) does not hold, but SNR-DSBM does. 
\vskip 2pt
\noindent (a) Consider the setting in Example \ref{example: misclassnew} with $K = Cm^{0.5-\delta}$ (that is $K=o(m)$) and $n = Cm^{4\delta}$ for some $C>0$ and $\delta < 1/6$. It can easily be seen that (A1) does not hold, but SNR-DSBM does. 
\vskip 2pt
\noindent (b) Suppose all assumptions in Example \ref{example: misclassnew2} hold, $K$ is finite and $m = Cn^{2\delta}$. In this case,  (A1) does not hold, but SNR-DSBM does.
\vskip 2pt
\noindent (c) Finally, consider the setup in Example \ref{example: misclass1new} with $K = o(m)$,  $m_{\min} = Cm^{\delta}$, $n = m^{\lambda}$ for some $\lambda >0$,  $\delta \in [0,1]$ and $-\lambda/2 \leq 2\delta - 1 < \lambda/2$. The same conclusion on (A1) failing to hold, while SNR-DSBM holding is
reached. 
\vskip 2pt
\noindent Therefore, in each of the (a)-(c) cases, together with (\ref{eqn: v10new1}) do not satisfy (A1) and SNR-ER,  whereas  SNR-DSBM holds.
\end{example}

\begin{example} \label{example: v10new2}
\textbf{(Change in communities)} Consider a DSBM where for $0<p<1$, 
\begin{eqnarray}
K=2, && z(i) = \begin{cases}
1\ \ \text{if $i$ is odd} \\
2\ \ \text{if $i$ is even},
\end{cases}
\ \ w(i) = \begin{cases}
1\ \ \text{if $1 \leq i \leq [m/2]$} \\
2\ \ \text{if $[m/2]<i\leq m$},
\end{cases} \\
&&\Lambda = \Delta= \left(\begin{array}{cc}
p & p-\frac{1}{\sqrt{n}} \\ 
p-\frac{1}{\sqrt{n}} & p
\end{array} \right).  \nonumber 
\end{eqnarray}
This gives $||\text{Ed}_{z}(\Lambda)-\text{Ed}_{w}(\Delta)||_F^2 = \frac{m^2}{n}$. Hence, SNR-ER does not hold, but SNR-DSBM does. Also suppose $m = Cn^{\delta}$ for some $C>0$ and $\delta \in [1,1.5)$. In this case (A1) is not satisfied.
\end{example}

The methods discussed in Sections \ref{sec: DSBM} and \ref{sec: 2step} fail to detect the change-point under the above presented settings.
Therefore, alternative strategies not based on clustering and hence assumption (A1) need to be investigated.

One possibility for the case of a  single change-point being present was discussed in Remark \ref{rem: nc}. 
\begin{example} \label{example: v10new3}
As the true change-point $\tau_{m,n} \in (c^*,1-c^*)$, we can use $\tau^*_{m,n}$ to estimate $\tau_{m,n}$ and its consistency follows from SNR-DSBM and (A1*) $\frac{m}{\sqrt{n}\nu_{m,n}^2} = O(1)$ which is much weaker than (A1). As we have seen before, (A1) and SNR-ER do not hold in Examples \ref{example: v10new2} and \ref{example: v10new3} whereas SNR-DSBM is satisfied. 
Based on the discussion in Remark \ref{rem: nc}, it is easy to see that  (A1*)  holds for these examples. Therefore, for the settings posited in Examples \ref{example: v10new2} and \ref{example: v10new3}, $\tau^*_{m,n}$ estimates $\tau_{m,n}$ consistently. Nevertheless, as mentioned in Remark \ref{rem: nc}, this strategy is not easy to extend to a setting involving multiple change-points.
\end{example}

Another setting that does not require clustering is presented next and builds on the model discussed in \cite{G2015rate}. 

\begin{example} \label{example: v10new4}
Consider a DSBM with $K=2$ communities. Further, let  $B_{1z}$ and $B_{1w}$ be the blocks where node $1$ belongs to under $z$ and $w$, respectively, and let  $\Lambda = \left(\begin{array}{cc}
a_1 & d_1 \\ 
d_1 & a_1
\end{array} \right)$, $\Delta =  \left(\begin{array}{cc}
a_2 & d_2 \\ 
d_2 & a_2
\end{array} \right)$ with $0<c<a_1,a_2,d_1,d_2<1-c<1$, $a_1>d_1, a_2>d_2$,  $a_1-d_1 = a_2-d_2$ and the true change-point $\tau \in (c^*,1-c^*)$. Recall $\hat{p}_{ij,(b,n)}$ and $\hat{q}_{ij,(b,n)}$  from (\ref{eqn: estimatea1}).  Let $\gamma_j = \hat{p}_{11,(b,n)}-\hat{p}_{1j,(b,n)}$ and $\delta_j = \hat{q}_{11,(b,n)}-\hat{q}_{1j,(b,n)}$.  One can use the following algorithm to detect communities. Chose $B, B^*>0$ and $\delta \in (0,1)$ such that $\frac{B}{\sqrt{n^\delta}} \leq \frac{c^*}{1-c^*} (a_1-d_1)$.
\vskip2pt
\noindent 1. If $\gamma_j \leq \frac{B}{\sqrt{n^\delta}}$ and $\delta_j \leq \frac{B}{\sqrt{n^\delta}}$, then put node $j$  in $B_{1z}\cap B_{1w}$. 
\vskip 2pt
\noindent 2. If $\gamma_j \leq \frac{B}{\sqrt{n^\delta}}$ and $\delta_j > \frac{B}{\sqrt{n^\delta}}$, then  put node $j$  in $B_{1z}\cap B_{1w}^{c}$. 
\vskip 2pt
\noindent 3. If $\gamma_j > \frac{B}{\sqrt{n^\delta}}$ and $\delta_j \leq \frac{B}{\sqrt{n^\delta}}$, then put node $j$  in $B_{1z}^{c}\cap B_{1w}$. 
\vskip 2pt
\noindent 4. If $\gamma_j > \frac{B}{\sqrt{n^\delta}}$ and $\delta_j > \frac{B}{\sqrt{n^\delta}}$, then we need further investigation.
\vskip 2pt 
(4a) If $\frac{\gamma_j}{\delta_j} \leq 1-\frac{B^*}{\sqrt{n^\delta}}$, then put node $j$ in $B_{1z}\cap B_{1w}^{c}$.
\vskip 2pt
(4b) If $\frac{\gamma_j}{\delta_j} > 1+\frac{B^*}{\sqrt{n^\delta}}$, then put node $j$ in $B_{1z}^{c}\cap B_{1w}$.
\vskip 2pt
(4c) If $\frac{\gamma_j}{\delta_j} \in (1-\frac{B^*}{\sqrt{n^\delta}}, 1+ \frac{B^*}{\sqrt{n^\delta}})$, then put node $j$ in $B_{1z}^{c}\cap B_{1w}^{c}$.
\vskip2pt
\noindent In this algorithm, it is easy to see that $\text{P}(\text{no node is misclassifed}) \to 1$. Therefore, an alternative condition  (A9) is satisfied 
(see details about it in Section \ref{subsec: examplev10new4}) 
and $\tilde{\tilde{\tau}}_n$ estimates $\tau_n$ consistently. 
\end{example}
However, the setting in Example \ref{example: v10new4} is very specific involving two parameters only for each connection probability matrix),
which in turn allows one to use statistics based on the degree connectivity of each node and thus avoid using a clustering algorithm. 
Nevertheless, a generally applicable strategy is currently lacking for the regime where SNR-DSBM holds, but neither SNR-ER or (A1) do.
This constitutes an interesting direction of further research.

\subsubsection{justification of Example \ref{example: v10new4}} \label{subsec: examplev10new4}
It is easy to see that the following results (a)-(d) hold under the assumptions in Example \ref{example: v10new4}. 
\vskip 5pt
\noindent (a) $\gamma_j, \delta_j = O_{\text{P}}(\frac{1}{\sqrt{n}})$ when $j \in B_{1z}\cap B_{1w}$.
\vskip 5pt
\noindent (b) $\gamma_j - \frac{b-\tau}{b} (a_2-d_2),\ \  \delta_j - (a_2-d_2),\ \  \frac{\gamma_j}{\delta_j} - \frac{b-\tau}{b} = O_{\text{P}}(\frac{1}{\sqrt{n}})$ when $j \in B_{1z} \cap B_{1w}^{c}$. 
\vskip 5pt
\noindent (c) $\gamma_j - \frac{\tau}{b}(a_1-d_1), \ \ \delta_j = O_{\text{P}}(\frac{1}{\sqrt{n}})$ when $j \in B_{1z}^{c}\cap B_{1w}$.
\vskip 5pt
\noindent (d) $\gamma_j - (a_1-d_1),\ \ \delta_j - (a_1-d_1),\ \ \frac{\gamma_j}{\delta_j} -1 = O_{\text{P}}(\frac{1}{\sqrt{n}})$ when $j \in B_{1z}^c \cap B_{1w}^{c}$. 
\vskip 5pt
\noindent Using the above results, we have
\vskip 5pt
\noindent (a) $P(j\ \text{is classified in}\ B_{1z} \cap B_{2w}\ |\  j \in B_{1z} \cap B_{2w})\ \leq \ P(\gamma_j < \frac{B}{\sqrt{n^{\delta}}}, \delta_j < \frac{B}{\sqrt{n^{\delta}}}\ |\  j \in B_{1z} \cap B_{2w}) \to 1$.
\vskip 5pt
\noindent (b) $P(j\ \text{is classified in}\ B_{1z}^{c} \cap B_{2w}\ |\  j \in B_{1z}^{c} \cap B_{2w}) \leq P(\gamma_j > \frac{B}{\sqrt{n^{\delta}}}, \delta_j < \frac{B}{\sqrt{n^{\delta}}}\ |\  j \in B_{1z}^{c} \cap B_{2w}) +  P(\frac{\gamma_j}{\delta_j} > 1 + \frac{B^{*}}{\sqrt{n^\delta}}\ |\  j \in B_{1z}^{c} \cap B_{2w}) \to 1$.
\vskip 5pt 
\noindent (c) $P(j\ \text{is classified in}\ B_{1z} \cap B_{2w}^{c}\ |\  j \in B_{1z} \cap B_{2w}^{c}) \leq P(\gamma_j < \frac{B}{\sqrt{n^{\delta}}}, \delta_j > \frac{B}{\sqrt{n^{\delta}}}\ |\  j \in B_{1z} \cap B_{2w}^{c}) +  P(\frac{\gamma_j}{\delta_j} < 1 - \frac{B^{*}}{\sqrt{n^\delta}}\ |\  j \in B_{1z} \cap B_{2w}^{c}) \to 1$.
\vskip 5pt
\noindent (d) $P(j\ \text{is classified in}\ B_{1z}^{c} \cap B_{2w}^{c}\ |\  j \in B_{1z}^{c} \cap B_{2w}^{c}) \leq   P(\frac{\gamma_j}{\delta_j} \in  (1 - \frac{B^{*}}{\sqrt{n^\delta}}, 1 + \frac{B^{*}}{\sqrt{n^\delta}})\ |\  j \in B_{1z}^{c} \cap B_{2w}^{c}) \to 1$.
\vskip 5pt
\noindent These all together implies $P(\text{no node is misclassified}) \to 1$.

\subsection{Proof of Theorem \ref{lem: b2lse}} \label{subsec: b2lse}
Next, we prove Theorem \ref{lem: b2lse} for $\tilde{\tilde{\tau}}_{m,n}$.Note that the proof for $\hat{\tau}_{m,n}$ is much simpler, once we use $z_1=z_2=z$, $w_1=w_2=w$, $K=m$ and $\mathcal{M}_{b,n,m} =0$ in the following proof. 
\vskip 2pt
\noindent Suppose $||\text{Ed}_z(\lambda) -\text{Ed}_w(\Delta)||_F \to \infty$. Then by Theorem \ref{lem: b1}, it is easy to see that $P(\tilde{\tilde{\tau}}_{m,n} = \tau_{m,n}) \to 1$. 
\vskip 3pt
\noindent Lemma \ref{lem: wvandis1} from \cite{Wellner1996empirical} proves useful for establishing the asymptotic distribution of the change-point
 estimate, when $||\text{Ed}_{z}(\Lambda) -\text{Ed}_{w}(\Delta)||_F \to c \geq 0$.

\noindent Next, suppose $||\text{Ed}_z(\lambda) -\text{Ed}_w(\Delta)||_F \to c \geq 0$. Take $h = n|\tau -b|||\text{Ed}_z(\lambda) -\text{Ed}_w(\Delta)||_F ^2$.  

\noindent Recall the definitions of $A(b)$ and $D(b)$ from (\ref{eqn: abd1}). Using expectations in (\ref{eqn: compa}) and (\ref{eqn: compd}), it is easy to see that by SNR-DSBM, (A1) and as $||\text{Ed}_z(\lambda) -\text{Ed}_w(\Delta)||_F \to c \geq 0$, we have
\begin{eqnarray}
E\sup_{h \in \mathcal{C}}|nA(b)|,  E\sup_{h \in \mathcal{C}}|nD(b)| \leq  C\frac{K^2}{n||\text{Ed}_z(\lambda) -\text{Ed}_w(\Delta)||_F^2} + C \frac{n \mathbb{M}_{b,n,m}^2}{|\text{Ed}_z(\lambda) -\text{Ed}_w(\Delta)||_F^2} \to 0 \nonumber 
\end{eqnarray}
for some compact set $\mathcal{C} \subset \mathbb{R}$. 
\vskip 2pt
This establishes that if $||\text{Ed}_z(\lambda) -\text{Ed}_w(\Delta)||_F \to c \geq 0$ and SNR-DSBM and (A1) hold, then 
\begin{eqnarray} \label{eqn: distnt1t2}
\sup_{h \in \mathcal{C}}|nA(b)|,\ \  \sup_{h \in \mathcal{C}}|nD(b)| \stackrel{\text{P}}{\to} 0. 
\end{eqnarray}
\noindent Next, recall the definition of $B(b)$ from (\ref{eqn: abd1}).  Using similar arguments in Section \ref{subsec: b1}, it is easy to show that
\begin{eqnarray}
B(b) &=& \sum_{t=nb+1}^{n\tau} \sum_{i,j=1}^{m} (2A_{ijt} -2\lambda_{z(i)z(j)})(\lambda_{z(i)z(j)}-\delta_{w(i)w(j)}) \nonumber \\
&& \hspace{2 cm} + n|\tau-b| ||\text{Ed}_{z}(\Lambda) -\text{Ed}_{w}(\Delta)||_F^2 + R(b) \nonumber 
\end{eqnarray}
where $R(b) \leq C|\tau-b|n^2 \mathcal{M}_{b,n,m}^2$. Therefore, by (A1)
\begin{eqnarray} \label{eqn: t32t33}
\sup_{h \in \mathcal{C}}|R(b)| \stackrel{\text{P}}{\to} 0. 
\end{eqnarray}
\vskip 2pt 
\noindent Suppose $||\text{Ed}_{z}(\Lambda) -\text{Ed}_{w}(\Delta)||_F \to 0$.  Applying the Central Limit Theorem, it is easy to see that
\begin{eqnarray} \label{eqn: t31}
\sup_{h \in \mathcal{C}}|B(b)-R(b) + |h| + 2\gamma  B_h| \stackrel{\text{P}}{\to} 0. 
\end{eqnarray}
Thus, by  (\ref{eqn: distnt1t2})-(\ref{eqn: t31}) and Lemma \ref{lem: wvandis1}, 
\begin{eqnarray}
n ||\text{Ed}_{z}(\Lambda) - \text{Ed}_{w}(\Delta)||_F^2 (\tilde{\tilde{\tau}}_{m,n}-\tau_{m,n}) \stackrel{\mathcal{D}}{\to} \arg \min_{h \in \mathbb{R}} (|h|+2\gamma B_h) & \stackrel{\mathcal{D} } {=} & \arg \max_{h \in \mathbb{R}} (-0.5|h|+\gamma B_h) \nonumber \\
& \stackrel{\mathcal{D} } {=} & \gamma^2 \arg \max_{h \in \mathbb{R}} (-0.5|h|+ B_h).  \nonumber
\end{eqnarray}
This proves Part(b) of Theorem \ref{lem: b2lse}. 
\vskip 3pt
\noindent Suppose $||\text{Ed}_{z}(\Lambda) -\text{Ed}_{w}(\Delta)||_F \to c>0$.   Then,
\begin{eqnarray}
B(b) - R(b) &=& \sum_{i,j=1}^{m} \sum_{t=nb+1}^{n\tau} \bigg[- (A_{ij,(t,n)} - {\lambda}_{z(i)z(j)})^2 + (A_{ij,(t,n)} - \delta_{w(i)w(j)})^2\bigg] \nonumber \\
&=& \sum_{i,j \in \mathcal{K}_n} \sum_{t=nb+1}^{n\tau} \bigg[ - (A_{ij,(t,n)} - \lambda_{z(i)z(j)})^2 + (A_{ij,(t,n)} - \delta_{w(i)w(j)})^2\bigg] \nonumber \\
&& + \sum_{i,j\in \mathcal{K}_0} \sum_{t=nb+1}^{n\tau} \bigg[ -(A_{ij,(t,n)} -\lambda_{z(i)z(j)})^2 + (A_{ij,(t,n)} - \delta_{w(i)w(j)})^2\bigg] \nonumber \\
&=& T_{a} + T_{b} \ \ (\text{say}). \label{eqn: t311312}
\end{eqnarray}
By (A6) and (A7) and if $||\text{Ed}_{z}(\Lambda) -\text{Ed}_{w}(\Delta)||_F \to c>0$, we obtain
\begin{eqnarray}
\sup_{h \in \mathcal{C}} |T_{b} - A^{*}(h)| \stackrel{\text{P}}{\to} 0 \label{eqn: t312}
\end{eqnarray}
where each $h \in \mathbb{Z}$, 
$A^{*}(c^2(h+1)) - A^{*}(c^2h) = \sum_{k \in \mathcal{K}_0} \bigg[(Z_{ij, {h}} -a_{ij,1}^{*})^2 -(Z_{ij,{h}} -a_{ij,2}^{*})^2 \bigg]$
and $\{Z_{ij, {h}}\}$ are independently distributed with $Z_{ij, {h}} \stackrel{d}{=} A_{ij,1}^{*}I({h} < 0) + A_{ij,2}^{*}I({h} \geq  0)$ for all $(i,j) \in \mathcal{K}_0$. 

 \noindent Next, 
$T_{a} = 2\sum_{t=nb+1}^{n\tau}\sum_{i,j \in \mathcal{K}_n} (A_{ij,(t,n)}-\lambda_{z(i)z(j)})(\delta_{w(i)w(j)}-\lambda_{z(i)z(j)}) +|h|$. 
An application of the Central Limit Theorem together with (A4) and (A5) yields
\begin{eqnarray} \label{eqn: t311}
\sup_{h \in \mathcal{C}}|T_{a} - D^{*}(h)-C^{*}(h)| \stackrel{\text{P}}{\to} 0. 
\end{eqnarray}
where  for each ${h} \in \mathbb{Z}$, $D^{*} (c^2 (h+1))-D^{*}(c^2 h) = 0.5 {\rm{Sign}}(-h) c_1^2$ and $C^{*}(c^2(h+1)) - C^{*}(c^2 h) = \tilde{\gamma}_{_{\text{LSE}}} W_{{h}},\ \ W_{{h}} \stackrel{\text{i.i.d.}}{\sim} \mathcal{N}(0,1)$.

\noindent Therefore, by (\ref{eqn: distnt1t2}), (\ref{eqn: t32t33}),  (\ref{eqn: t312}), (\ref{eqn: t311}) and Lemma \ref{lem: wvandis1}, Part (c) of Theorem \ref{lem: b2lse} is established.

\noindent This completes the proof of Theorem \ref{lem: b2lse}.  $\blacksquare$

\subsection{Proof of Theorem \ref{thm: adapdsbm}} \label{subsec: adapdsbm}

Suppose $h>0$. Then,

\begin{eqnarray}
\tilde{L}^{*}(\hat{\tau}_{m,n} + h/n, \hat{z},\hat{w},\hat{\hat{\Lambda}},\hat{\hat{\Delta}}) &=& \frac{1}{n} \sum_{i,j=1}^{m} \bigg[\sum_{t=1}^{n\hat{\tau}_{m,n} +h} (A_{ij,(t,n),\text{DSBM}} - \hat{\hat{\lambda}}_{\hat{z}(i)\hat{z}(j)})^2 \nonumber \\
&& \hspace{1.5 cm}+ \sum_{t=n\hat{\tau}_{m,n} + h +1}^{n} (A_{ij,(t,n),\text{DSBM}}-\hat{\hat{\delta}}_{\hat{w}(i)\hat{w}(j)})^2 \bigg]. \nonumber 
\end{eqnarray}
We then have
\begin{eqnarray}
&& \tilde{L}^{*}(\hat{\tau}_{m,n} + h/n, \hat{z},\hat{w},\hat{\hat{\Lambda}},\hat{\hat{\Delta}}) - \tilde{L}^{*}(\hat{\tau}_{m,n}, \hat{z},\hat{w},\hat{\hat{\Lambda}},\hat{\hat{\Delta}}) \nonumber \\
& = & \frac{1}{n} \sum_{i,j=1}^{m} \sum_{t=n\hat{\tau}_{m,n}+1}^{n\hat{\tau}_{m,n}+h} \bigg[(A_{ij,(t,n),\text{DSBM}} - \hat{\hat{\lambda}}_{\hat{z}(i)\hat{z}(j)})^2 -  (A_{ij,(t,n),\text{DSBM}}-\hat{\hat{\delta}}_{\hat{w}(i)\hat{w}(j)})^2\bigg] \nonumber \\
& = & \frac{1}{n} \sum_{i,j=1}^{m} \sum_{t=n\hat{\tau}_{m,n}+1}^{n\hat{\tau}_{m,n}+h} \bigg[(\hat{\hat{\delta}}_{\hat{w}(i)\hat{w}(j)} - \hat{\hat{\lambda}}_{\hat{z}(i)\hat{z}(j)})^2 \nonumber \\
&& \hspace{5 cm}+ 2 (A_{ij,(t,n),\text{DSBM}}-\hat{\hat{\delta}}_{\hat{w}(i)\hat{w}(j)})(\hat{\hat{\delta}}_{\hat{w}(i)\hat{w}(j)} - \hat{\hat{\lambda}}_{\hat{z}(i)\hat{z}(j)})\bigg]. \nonumber
\end{eqnarray}
\noindent Let $E^{**}(\cdot) =E(\cdot| \hat{z},\hat{w})$,  $V^{**} = V(\cdot|\hat{z},\hat{w})$ and $\text{Cov}^{**}(\cdot) = \text{Cov}(\cdot|\hat{z},\hat{w})$.  \vskip 2pt
\noindent Therefore,
\begin{eqnarray}
 E^{**}(\tilde{L}^{*}(\hat{\tau}_n + h/n, \hat{z},\hat{w},\hat{\hat{\Lambda}},\hat{\hat{\Delta}}) - \tilde{L}^{*}(\hat{\tau}_n, \hat{z},\hat{w},\hat{\hat{\Lambda}},\hat{\hat{\Delta}})) 
&=& \frac{h}{n} ||\text{Ed}_{\hat{z}}(\hat{\hat{\Lambda}}) - \text{Ed}_{\hat{w}}(\hat{\hat{\Delta}})||_F^2. \nonumber 
\end{eqnarray}
Note that all entries of $\hat{\hat{\Lambda}}$ and $\hat{\hat{\Delta}}$ are bounded away from $0$ and $1$, since $\log m = o(\sqrt{n})$. Therefore, 
\begin{eqnarray}
&& V^{**}(\tilde{L}^{*}(\hat{\tau}_{m,n} + h/n, \hat{z},\hat{w},\hat{\hat{\Lambda}},\hat{\hat{\Delta}}) - \tilde{L}^{*}(\hat{\tau}_{m,n}, \hat{z},\hat{w},\hat{\hat{\Lambda}},\hat{\hat{\Delta}}))  \nonumber \\
&=& \frac{h}{n^2} \sum_{i,j=1}^{m} (\hat{\hat{\lambda}}_{\hat{z}(i)\hat{z}(j)}-\hat{\hat{\delta}}_{\hat{w}(i)\hat{w}(j)})^2 \hat{\hat{\delta}}_{\hat{w}(i)\hat{w}(j)}(1-\hat{\hat{\delta}}_{\hat{w}(i)\hat{w}(j)})  \nonumber \\
& \leq & \frac{h}{n^2} ||\text{Ed}_{\hat{z}}(\hat{\hat{\Lambda}}) - \text{Ed}_{\hat{w}}(\hat{\hat{\Delta}})||_F^2. \nonumber
\end{eqnarray}
Hence, by Lemma \ref{lem: wvan1} and similar arguments to those made at the beginning of  Section \ref{subsec: b1}, we have
\begin{eqnarray}
||\text{Ed}_{\hat{z}}(\hat{\hat{\Lambda}}) - \text{Ed}_{\hat{w}}(\hat{\hat{\Delta}})||_F^2 h = O_{\text{P}}(1).\nonumber 
\end{eqnarray}
Then, by Lemma \ref{lem: adaplem}(a), 
\begin{eqnarray}
||\text{Ed}_{z}(\Lambda) - \text{Ed}_{w}(\Delta)||_F^2 h = O_{\text{P}}(1).\nonumber 
\end{eqnarray}
This implies Theorem \ref{thm: adapdsbm}(a). 

Next, we establish Theorem \ref{thm: adapdsbm}(b).   Note that
\begin{eqnarray}
&& n(\tilde{L}^{*}(\hat{\tau}_{m,n} + h||\text{Ed}_{\hat{z}}(\hat{\hat{\Lambda}})-\text{Ed}_{\hat{w}}(\hat{\hat{\Delta}})||_F^{-2}/n, \hat{z},\hat{w},\hat{\hat{\Lambda}},\hat{\hat{\Delta}}) - \tilde{L}^{*}(\hat{\tau}_{m,n}, \hat{z},\hat{w},\hat{\hat{\Lambda}},\hat{\hat{\Delta}}))    
\nonumber \\
&=& 
 -|h| -2 \sum_{t=n\hat{\tau}_{m,n}+1}^{n\hat{\tau}_{m,n} + h} \sum_{i,j=1}^{m} (\hat{\hat{\lambda}}_{\hat{z}(i)\hat{z}(j)} - \hat{\hat{\delta}}_{\hat{w}(i)\hat{w}(j)}) ({A}_{ij,(t,n),\text{DSBM}}-\hat{\hat{\delta}}_{\hat{w}(i)\hat{w}(j)}) \nonumber \\
 && \hspace{8 cm}+ o_{\text{P}}(1), 
\end{eqnarray}
Further, note that given $\{A_{t,n}\}$,  $\{\sum_{i,j=1}^{m} (\hat{\hat{\lambda}}_{\hat{z}(i)\hat{z}(j)} - \hat{\hat{\delta}}_{\hat{w}(i)\hat{w}(j)}) ({A}_{ij,(t,n),\text{DSBM}}-\hat{\hat{\delta}}_{\hat{w}(i)\hat{w}(j)})\}$ is a collection of independent random variables.  By  Lemma \ref{lem: adaplem}(b), we have 
\begin{align*}
&& E \sum_{t=n\hat{\tau}_{m,n}+1}^{n\hat{\tau}_{m,n}+h} \sum_{i,j=1}^{m} (\hat{\hat{\lambda}}_{\hat{z}(i)\hat{z}(j)} - \hat{\hat{\delta}}_{\hat{w}(i)\hat{w}(j)}) ({A}_{ij,(t,n),\text{DSBM}}-\hat{\hat{\delta}}_{\hat{w}(i)\hat{w}(j)})\nonumber \\
& =&  E\bigg[\sum_{t=n\hat{\tau}_{m,n}+1}^{n\hat{\tau}_{m,n}+h} \sum_{i,j=1}^{m} (\hat{\hat{\lambda}}_{\hat{z}(i)\hat{z}(j)} - \hat{\hat{\delta}}_{\hat{w}(i)\hat{w}(j)}) E^{**}({A}_{ij,(t,n),\text{DSBM}}-\hat{\hat{\delta}}_{\hat{w}(i)\hat{w}(j)})\bigg] = 0, 
\end{align*}
\begin{align*}
 && \text{V}\bigg(\sum_{t=n\hat{\tau}_{m,n}+1}^{n\hat{\tau}_{m,n}+h} \sum_{i,j=1}^{m} (\hat{\hat{\lambda}}_{\hat{z}(i)\hat{z}(j)} - \hat{\hat{\delta}}_{\hat{w}(i)\hat{w}(j)}) ({A}_{ij,(t,n),\text{DSBM}}-\hat{\hat{\delta}}_{\hat{w}(i)\hat{w}(j)})\bigg) \nonumber \\
&=& hE\bigg(||\text{Ed}_{\hat{z}}(\hat{\hat{\Lambda}}) - \text{Ed}_{\hat{w}}(\hat{\hat{\Delta}})||_F^{-2} \sum_{i,j=1}^{m} (\hat{\hat{\Lambda}}_{\hat{z}(i)\hat{z}(i)} -\hat{\hat{\Delta}}_{\hat{w}(i)\hat{w}(i)})^2  \hat{\hat{\delta}}_{\hat{w}(i)\hat{w}(i)}(1-\hat{\hat{\delta}}_{\hat{w}(i)\hat{w}(i)}) \bigg) \to h \gamma^2, 
\end{align*}
\begin{eqnarray}
&& \frac{E\bigg[\sum_{t=n\hat{\tau}_{m,n}+1}^{n\hat{\tau}_{m,n}+h} \sum_{i,j=1}^{m} (\hat{\hat{\lambda}}_{\hat{z}(i)\hat{z}(j)} - \hat{\hat{\delta}}_{\hat{w}(i)\hat{w}(j)}) ({A}_{ij,(t,n),\text{DSBM}}-\hat{\hat{\delta}}_{\hat{w}(i)\hat{w}(j)})\bigg]^3}{\bigg[\text{V}\bigg(\sum_{t=n\hat{\tau}_{m,n}+1}^{n\hat{\tau}_{m,n}+h} \sum_{i,j=1}^{m} (\hat{\hat{\lambda}}_{\hat{z}(i)\hat{z}(j)} - \hat{\hat{\delta}}_{\hat{w}(i)\hat{w}(j)}) ({A}_{ij,(t,n),\text{DSBM}}-\hat{\hat{\delta}}_{\hat{w}(i)\hat{w}(j)})\bigg)\bigg]^{3/2}} \nonumber \\
 & \leq &  C E\left(\frac{\sum_{i,j=1}^{m}|\hat{\hat{\lambda}}_{\hat{z}(i)\hat{z}(j)} - \hat{\hat{\delta}}_{\hat{w}(i)\hat{w}(j)}|^3}{||\text{Ed}_{\hat{z}}(\hat{\hat{\Lambda}})-\text{Ed}_{\hat{w}}(\hat{\hat{\Delta}})||_F^{2}}\right)  \nonumber \\
 &\leq  & C (E||\text{Ed}_{\hat{z}}(\hat{\hat{\Lambda}})-\text{Ed}_{\hat{w}}(\hat{\hat{\Delta}})||_F^{2})^{1/2} \nonumber \\
&\leq & C\left( E(||\text{Ed}_{z}(\Lambda) - \text{Ed}_{\hat{z}}{\hat{\hat{\Lambda}}}||_F^2) + E(||\text{Ed}_{w}(\Delta)- \text{Ed}_{\hat{w}}({\hat{\hat{\Delta}}})||_F^2)+ ||\text{Ed}_{z}(\Lambda)-\text{Ed}_{w}(\Delta)||_F^{2} \right) \to 0. \nonumber 
\end{eqnarray}
Hence, an application of Lyapunov's Central Limit Theorem together with (A1)-(A4) and SNR**-DSBM-ADAP yields
\begin{eqnarray}
n(\tilde{L}^{*}(\hat{\tau}_{m,n} + h||\text{Ed}_{\hat{z}}(\hat{\hat{\Lambda}})-\text{Ed}_{\hat{w}}(\hat{\hat{\Delta}})||_F^{-2}/n, \hat{z},\hat{w},\hat{\hat{\Lambda}},\hat{\hat{\Delta}}) - \tilde{L}^{*}(\hat{\tau}_{m,n}, \hat{z},\hat{w},\hat{\hat{\Lambda}},\hat{\hat{\Delta}})) \nonumber \\
\Rightarrow -|h| + \gamma B_h \label{eqn: adaplseb1}
\end{eqnarray}
Similar arguments are applicable for the case of $h<0$. 
\vskip 2pt
\noindent Finally, (\ref{eqn: adaplseb1})  in conjunction with Lemma \ref{lem: wvandis1} establish Theorem \ref{thm: adapdsbm}(b). 

\noindent An analogous argument to that in the proof of Theorem \ref{lem: b2lse}(c) together with similar approximations as in the proof of Theorem \ref{thm: adapdsbm}(b)  establish Theorem \ref{thm: adapdsbm}(c) and hence they are omitted.
\noindent Hence, Theorem \ref{thm: adapdsbm} is established. $\blacksquare$

\subsection{Assumptions for the asymptotic distribution of change-point estimators} \label{subsec: assumption}

Next, we provide precise statements of Assumptions (A3)-(A7) required for establishing the asymptotic distribution of the change point estimators in Theorem \ref{lem: b2lse}. 
A brief comment on these assumptions is given after stating them. We refer to \cite{BBM2017}  for more in depth explanation.
\vskip 2pt
For Regime II,  we define 
\begin{eqnarray}
\gamma^2 &=& \lim \frac{\sum_{i,j=1}^{m}(\lambda_{z(i)z(j)}-\delta_{w(i)w(j)})^2\lambda_{z(i)z(j)}(1-\lambda_{z(i)z(j)})}{\sum_{i,j=1}^{m}(\lambda_{z(i)z(j)}-\delta_{w(i)w(j)})^2} \nonumber \\
&=& \lim \frac{\sum_{i,j=1}^{m}(\lambda_{z(i)z(j)}-\delta_{w(i)w(j)})^2\delta_{w(i)w(j)}(1-\delta_{w(i)w(j)})}{\sum_{i,j=1}^{m}(\lambda_{z(i)z(j)}-\delta_{w(i)w(j)})^2}, \nonumber 
\end{eqnarray}
and assume  that
\vskip 2pt
\noindent \textbf{(A3)} $\gamma^2$ exists.
\vskip 2pt
\noindent In Regime II, the asymptotic variance of the change-point estimator is proportional to $\gamma^2$. Hence, we require (A3) for its existence and (A2) for the non-degeneracy of the asymptotic distribution. 

In Regime III, we consider the following set of edges
\begin{eqnarray}
\mathcal{K}_n = \{(i,j): 1 \leq i,j \leq m,\ \  
|\lambda_{z(i)z(j)} - \delta_{w(i)w(j)}| \to 0\}.
\end{eqnarray} 
Define 
\begin{eqnarray}
c_1^2 &=& \lim \sum_{i,j \in \mathcal{K}_n} (\lambda_{z(i)z(j)}-\delta_{w(i)w(j)})^2  \ \ \text{and}\ \label{eqn: c1defn} \\
\tilde{\gamma}^2  &=& \lim \sum_{i,j \in \mathcal{K}_n}(\lambda_{z(i)z(j)}-\delta_{w(i)w(j)})^2\lambda_{z(i)z(j)}(1-\lambda_{z(i)z(j)}) \nonumber \\
&=& \lim \sum_{i,j \in \mathcal{K}_n}(\lambda_{z(i)z(j)}-\delta_{w(i)w(j)})^2\delta_{w(i)w(j)}(1-\delta_{w(i)w(j)}). \nonumber  
\end{eqnarray}
\noindent Consider the following assumptions. \\
\noindent \textbf{(A4)} $c_1$ and $\tilde{\gamma}$ exist.  \\
\noindent \textbf{(A5)} $\sup_{ij \in \mathcal{K}_n} | \lambda_{z(i)z(j)} - \delta_{w(i)w(j)}| \to 0$. \\
\noindent \textbf{(A6)} $\mathcal{K}_0 = \mathcal{K}_n^c$ does not vary with $n$. \\
\noindent \textbf{(A7)} For some $\tau^{*} \in (c^{*},1-c^{*})$, $\tau_n \to \tau^{*}$ as $n \to \infty$. Suppose $\lambda_{z(i)z(j)} \to a_{ij,1}^{*}$ and $\delta_{w(i)w(j)} \to a_{ij,2}^{*}$ for all $(i,j) \in \mathcal{K}_0$.  

In Regime III, we need to treat edges in $\mathcal{K}_n$ and $\mathcal{K}_0 = \mathcal{K}_n^c$ separately. Note that in Regime II, $\mathcal{K}_n = \{(i,j):\ 1\leq i,j \leq m\}$  is the set of all edges.  Hence, we can treat $\mathcal{K}_n$ in a similar way as in Regime II and hence we need (A4) in Regime III analogous to (A3) in Regime II. (A5) is a technical assumption and is required for establishing asymptotic normality on $\mathcal{K}_n$.  Moreover,  $\mathcal{K}_0$ is a finite set. (A6) guarantees that $\mathcal{K}_0$ does not vary with $n$. Consider the collection of independent Bernoulli random variables $\{A_{ij,l}^{*}: (i,j) \in \mathcal{K}_0, l=1,2\}$ with $E(A_{ij,l}^{*}) = a_{ij,l}^{*}$.  (A7) ensures that  $A_{ij,(\lfloor nf \rfloor, n)} \stackrel{\mathcal{D}}{\to} A_{ij,1}^{*}I(f<\tau^{*}) + A_{ij,2}^{*}I(f > \tau^{*})\ \forall (i,j) \in \mathcal{K}_0$.

\begin{remark} \label{rem: sparse}
Note that (A2) is a crucial assumption for establishing the asymptotic distribution of the change-point estimator. It indicates that the resulting random graphi's topology. However, another regime of interest is that where the expected degree of each node grows slower than the total number of nodes in the graph, which gives rise to a {\em sparse regime}.  A number of technical results both from probabilistic and statistical viewpoints have been considered in the
recent literature - see, for examples \cite{SB2015} and \cite{LLV2017}. Note that results strongly diverge in their conclusions under these two regimes. For example,  \cite{O2009} showed that the inhomogeneous Erd\H{o}s-R\'{e}nyi model satisfies
\begin{eqnarray}
||L(A) - L(EA)|| = O\left(\sqrt{\frac{\log m}{d_0}} \right) \nonumber 
\end{eqnarray}
with high probability, where $m$ is the total number of nodes in the graph, $d_0 = \min_{i} \sum_{j=1}^{m} EA_{ij}$, $A$ is the observed adjacency matrix, $L(\cdot)$ is the Laplacian and $||\cdot||$ is the operator norm. Therefore, if the expected  degrees  are growing slower than $\log m$, $L(A)$ will not be concentrated around $L(EA)$.  \cite{LLV2017} established a different concentration inequality for the case $d_0 = o(\log m)$ after appropriate regularization on the Laplacian and the edge probability matrix. \cite{SB2015} also established the convergence rate of the  eigenvectors of the Laplacian  for SBM
 with two communities, which deviates from existing results for dense random graphs.  The upshot is that results for the sparse regime are
markedly different than those for the dense one. 

It is worth noting that the convergence rate results established in Sections \ref{sec: DSBM} and \ref{sec: 2step} hold also for the sparse setting; however,
establishing the  asymptotic distribution of the change-point estimate in a sparse setting, together with issues of adaptive inference will require further work.
\end{remark}

\bibliographystyle{natbib}

\end{document}